\numberwithin{equation}{section}
\theoremstyle{plain}
\newtheorem{thm}{Theorem}[section]
\newtheorem{cor}[thm]{Corollary}
\newtheorem{lem}[thm]{Lemma}
\newtheorem{prop}[thm]{Proposition}
\newtheorem{conj}[thm]{Conjecture}
\theoremstyle{definition}
\newtheorem{defn}[thm]{Definition}
\theoremstyle{remark}
\newtheorem{rem}[thm]{Remark}
\newtheorem{ex}[thm]{Example}
\newcommand{\rr}{\ensuremath{\mathbb{R}}}
\newcommand{\zz}{\ensuremath{\mathbb{Z}}}
\newcommand{\leg}{\ensuremath{\Lambda}}
\newcommand{\dfn}[1]{{\textbf {#1}}}
\DeclareMathOperator{\tb}{tb} 
\DeclareMathOperator{\lk}{lk}
\DeclareMathOperator{\rlk}{rlk}
\DeclareMathOperator{\ind}{ind}
\def\js#1{{\textcolor{Cerulean}{#1}}}
\begin{document}

\title{Non-Orientable Lagrangian Fillings of Legendrian Knots} 

\begin{abstract}
We investigate when a Legendrian knot in standard contact $\rr^3$ has a non-orientable exact Lagrangian filling. We prove analogs of several results in the orientable setting, develop new combinatorial obstructions to fillability, and determine when several families of knots have such fillings. In particular, we determine completely when an alternating knot (and more generally a plus-adequate knot) is decomposably non-orientably fillable, and classify the fillability of most torus and 3-strand pretzel knots. We also describe rigidity phenomena of decomposable non-orientable fillings, including finiteness of the possible normal Euler numbers of fillings, and the minimization of crosscap numbers of fillings, obtaining results which contrast in interesting ways with the smooth setting.
\end{abstract}

\date{\today}

\author[L. Chen]{Linyi Chen} 
\address{Google, Inc.} \email{chenlinyi1996@gmail.com}

\author[G. Crider-Phillips]{Grant Crider-Phillips} 
\address{University of Oregon} \email{criderg@uoregon.edu}

\author[B. Reinoso]{Braeden Reinoso} \address{Boston College} \email{reinosob@bc.edu}

\author[J. Sabloff]{Joshua M. Sabloff} \address{Haverford College} \email{jsabloff@haverford.edu} \thanks{JS, LC, and LY were
partially supported by NSF grant DMS-1406093 during the preparation of this paper; GCP and BR were partially supported by grants from Haverford College.}

\author[L. Yao]{Leyu Yao} 
\address{University of Cambridge} \email{ly339@cam.ac.uk}

\maketitle

\section{Introduction}
\label{sec:intro}


The motivating question for this paper is: given a Legendrian link $\leg \subset (S^3, \xi_0)$, how does the existence of an exact Lagrangian filling $L \subset (B^4,\omega_0)$, orientable or not, restrict the smooth knot type of $\leg$?  We say that a smooth knot $K$ is (orientably or non-orientably) \dfn{Lagrangian fillable} if it has  a Legendrian representative that has an exact (orientable or non-orientable) Lagrangian filling.

Being \emph{orientably} fillable is a strong condition on a smooth knot: the maximal Thurston-Bennequin number is realized by the negative Euler characteristic of a Lagrangian filling \cite{chantraine}, every Lagrangian filling minimizes the $4$-ball genus \cite{chantraine},  the HOMFLY bound on the maximal Thurston-Bennequin number is sharp, and the knot is quasipositive \cite{positivity} (using \cite{bo:qp, yasha:lagr-cyl}).  It is not yet clear exactly which smooth knots are orientably Lagrangian fillable, though the class lies strictly between the set of positive knots \cite{positivity} --- and even some almost positive knots \cite{tagami:filling-almost-pos} --- and the set of quasipositive knots.  Note that the motivating question introduces an interesting distinction between Legendrian and transverse knots:  the class of transverse knots with symplectic fillings coincides with quasipositive knots \cite{bo:qp}, though not all quasipositive knots are Lagrangian fillable.

Comparatively less is known about non-orientably fillable knots.  Atiponrat \cite{atiponrat:obstruction} developed an obstruction to the existence of a decomposable exact Lagrangian filling using the parity of the number of ``clasps'' of a normal ruling, and used that obstruction to prove that a maximal $tb$ representative of the $(4,-(2n+5))$ torus knot has no exact Lagrangian filling \cite{atiponrat:torus}.  Capovilla-Searle and Traynor \cite{cst:non-ori-cobordism} studied non-orientable, but not necessarily exact, Lagrangian endocobordisms; in developing obstructions to such endocobordisms, they produced examples of exact non-orientable fillings in a few families of knots, including some twist knots and $(p,-2)$ torus knots.

In this paper, we establish techniques for investigating non-orientable Lagrangian fillings and apply those techniques to families of knots. In parallel to the features of the orientable case, we begin by making connections between the classical invariants of a Legendrian knot and the \dfn{normal Euler numbers} of its non-orientable Lagrangian fillings:

\begin{prop}
\label{prop:tb-euler-filling}
If $L$ is a Lagrangian filling of $\leg$ with normal Euler number $e(L)$, then
	\[ tb(\leg) = - \chi(L) - e(L). \]
\end{prop}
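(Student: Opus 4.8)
The plan is to extract $tb(\leg)$ from the normal bundle $\nu L$ of the filling, using the Lagrangian condition to trade $\nu L$ for $TL$. Recall that (in the normalization used here) the normal Euler number $e(L)$ is the relative Euler number $e(\nu L, v_0)$ of $\nu L \to L$ with respect to the Seifert (zero-)framing $v_0$ of $\leg = \partial L$, regarded as a nowhere-zero section of $\nu L$ over $\partial L$; equivalently, it is the signed count of zeros of a generic extension of $v_0$ over $L$, with signs fixed by the orientation of $B^4$. (This makes sense even for non-orientable $L$: reversing a local orientation of $L$ also reverses the ambient-induced orientation of the $\nu L$-fibre, so the two ambiguities cancel.) Let $v_\xi$ be the contact (Legendrian) framing of $\leg$, also viewed as a section of $\nu L$ along $\partial L$. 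By the definition of $tb$, the framing $v_\xi$ winds $tb(\leg)$ times relative to $v_0$ around $\leg$; since replacing the boundary section of a rank-two bundle over a surface by one that winds $k$ more changes the relative Euler number by $k$, we obtain
\[ e(\nu L, v_\xi) \;=\; e(\nu L, v_0) + tb(\leg) \;=\; e(L) + tb(\leg). \]

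The crux is to compute $e(\nu L, v_\xi)$ a second way and find it equals $-\chi(L)$. Choose an $\omega_0$-compatible almost complex structure $J$ on $B^4$ for which $S^3 = \partial B^4$ is of contact type and $J$ preserves $\xi_0 \subset TS^3$. Since $L$ is Lagrangian, $w \mapsto Jw$ identifies $TL$ with $\nu L$ as bundles over $L$. An exact Lagrangian filling is cylindrical near $\leg$, so along $\leg$ the bundle $TL$ is $T\leg$ plus the Liouville direction; because $\leg$ is Legendrian, its positively-oriented unit tangent field $\tau$ lies in $\xi_0$, hence $J\tau$ lies in $\xi_0 \subset TS^3$, is normal to $L$ along $\leg$, and therefore represents the contact framing $v_\xi$. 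Transporting obstruction counts through the isomorphism $J$ thus gives $e(\nu L, v_\xi) = \pm\, e(TL, \tau)$, and the relative Euler number of the tangent bundle of a surface against its boundary-tangent field is its Euler characteristic, $e(TL, \tau) = \chi(L)$, by the relative Poincar\'e--Hopf theorem --- which holds for non-orientable $L$ once indices are read as local degrees, hence are insensitive to a simultaneous reversal of domain and target orientations. Combined with the display above, this yields $tb(\leg) = \pm\chi(L) - e(L)$.

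Pinning down that the sign is $-$ is the step I expect to be the real obstacle. The map $J\colon TL \to \nu L$ is fibrewise orientation-preserving only for the orientation $J$ itself induces on $\nu L$, whereas $e(L)$ and $e(\nu L, v_\xi)$ are defined using the orientation of $\nu L$ coming from that of $B^4$, and the two disagree: for a Lagrangian plane $P \subset \cc^2$ with compatible $J$, the direct-sum orientation of $P \oplus JP$ is the reverse of the complex orientation of $\cc^2 = P \oplus JP$ (one sees this on $\rr^2 \subset \cc^2$ by comparing $\partial_{x_1}\wedge\partial_{x_2}\wedge\partial_{y_1}\wedge\partial_{y_2}$ with $\partial_{x_1}\wedge\partial_{y_1}\wedge\partial_{x_2}\wedge\partial_{y_2}$; equivalently, the orientation-correct bundle model for the normal bundle of a Lagrangian is $T^*L$, not $TL$, and $e(T^*L,\cdot) = -e(TL,\cdot)$). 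This single sign flip turns $e(TL,\tau) = \chi(L)$ into $e(\nu L, v_\xi) = -\chi(L)$, and hence
\[ tb(\leg) \;=\; -\chi(L) - e(L). \]
As a check, when $L$ is orientable one has $e(L) = 0$ and this recovers Chantraine's equality $tb(\leg) = -\chi(L)$ \cite{chantraine}.

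For \emph{decomposable} fillings one can sidestep the sign analysis and instead induct on the decomposition: the identity holds for the disk filling a maximal-$tb$ unknot, and one checks directly that it survives each elementary move --- the minimum (birth) and the orientable and non-orientable saddles. Since Proposition~\ref{prop:tb-euler-filling} is stated for arbitrary exact Lagrangian fillings, however, the normal-bundle computation above is the one to run in detail.
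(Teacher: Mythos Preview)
Your argument is correct and is essentially the paper's own proof: the paper establishes the more general cobordism identity (Proposition~\ref{prop:tb-euler}) and reads off the filling case, using a compatible $J$ to identify $TL$ with $\nu L$, extracting $tb$ from the discrepancy between the contact and Seifert framings at the end, and computing the remaining self-intersection as $-\chi(L)$. The only cosmetic difference is in packaging the sign: rather than your global orientation comparison of $TL\oplus J(TL)$ against the ambient complex orientation, the paper chooses a Morse function $f$ on $L$ agreeing with the symplectization coordinate near the ends, pushes $L$ off along $J\nabla f$, and reads the contribution $(-1)^{1+\ind_p f}$ at each critical point from a local model, summing to $-\chi(L)$ directly.
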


Even though a knot has smooth fillings that realize infinitely many normal Euler numbers, we prove:

\begin{prop} \label{prop:finite-euler}
	For any given Legendrian knot, only finitely many Euler numbers may be realized by exact decomposable non-orientable Lagrangian fillings.
\end{prop}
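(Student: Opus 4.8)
The plan is to use Proposition~\ref{prop:tb-euler-filling} to trade Euler numbers for Euler characteristics, and then to control $\chi(L)$ through the finitely many augmentations of the Chekanov--Eliashberg DGA of $\leg$. Fix $\leg$ and let $L$ be an exact decomposable non-orientable Lagrangian filling. By Proposition~\ref{prop:tb-euler-filling}, $e(L) = -\chi(L) - \tb(\leg)$; since $\tb(\leg)$ is fixed, it suffices to show that $\chi(L)$ takes only finitely many values. Because $L$ is a connected non-orientable surface with a single boundary circle, it is homeomorphic to $\#_k \rr\pp^2$ minus an open disk for some $k \ge 1$, so $\chi(L) \le 0$ and $\chi(L)$ is determined by $\dim_{\zz/2} H_*(L;\zz/2) = 2 - \chi(L)$. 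Thus we only need an upper bound, depending on $\leg$ alone, for the total $\zz/2$-Betti number of $L$.

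Such a bound comes from Floer theory. A decomposable exact Lagrangian filling is in particular an exact Lagrangian cobordism from $\emptyset$ to $\leg$, so it induces a DGA morphism $\alg(\leg)\to\alg(\emptyset)=\zz/2$, i.e.\ an augmentation $\aug_L$ of $\alg(\leg)$ over $\zz/2$. (For a decomposable filling this augmentation, together with the chain maps attached to the elementary cobordisms, can be written down directly from the handle decomposition of $L$; alternatively one may invoke Ekholm's functoriality.) Now $\alg(\leg)$ is generated by the finite set of Reeb chords of $\leg$ --- equivalently, the crossings and right cusps of a fixed front --- say $N$ of them; hence there are at most $2^N$ augmentations of $\alg(\leg)$, so only finitely many augmentations arise from fillings, and up to isomorphism only finitely many linearized Legendrian contact homologies $LCH^{\aug}_*(\leg)$ occur. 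A Seidel-type isomorphism for exact Lagrangian fillings, which over $\zz/2$ requires no orientability hypothesis, identifies $H_*(L;\zz/2)$ with $LCH^{\aug_L}_*(\leg)$ (up to a global index shift, and as $\zz/2$-graded groups only, once $L$ is non-orientable). In particular $\dim_{\zz/2} H_*(L;\zz/2)\le N$, so $\chi(L)\ge 2-N$; combined with $\chi(L)\le 0$ this shows that $\chi(L)$, and hence $e(L)$, lies in a finite set determined by $\leg$.

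The main obstacle is the Seidel-type isomorphism in the non-orientable case: one must check that its proof --- or, if one wants an argument internal to the decomposable world, its hands-on construction from the handle decomposition of $L$ --- survives passage to $\zz/2$ coefficients with the $\zz$-grading collapsed to a $\zz/2$-grading, since there is no rotation number to anchor the grading once $L$ is non-orientable. If one prefers to avoid even this, it suffices to note that $H_*(L;\zz/2)$ is a subquotient of the linearized complex of $\aug_L$, a $\zz/2$-vector space of total dimension $N$, which already gives $-\chi(L) = b_1(L;\zz/2)-1 \le N-1$ and hence the finiteness claim.
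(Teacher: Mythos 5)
Your argument is correct in outline, but it is a genuinely different route from the paper's. The paper's proof is purely combinatorial and stays inside the decomposable world: a decomposable filling $L$ of $\leg$ induces a canonical ruling $\rho_L$ on the fixed front diagram, Proposition~\ref{prop:canonical-chi} gives $\chi(\rho_L)=\chi(L)$, a front diagram carries only finitely many rulings, and Proposition~\ref{prop:tb-euler} converts the finitely many values of $\chi(L)$ into finitely many values of $e(L)$ --- two lines, no Floer theory. You instead bound $\chi(L)$ from below by Floer-theoretic means: the filling induces an augmentation $\aug_L$ of $\alg(\leg)$ over $\zz/2$, and an ungraded Seidel-type isomorphism identifies $H_*(L;\zz/2)$ with the linearized homology of $\aug_L$, whose underlying complex has as many generators as $\leg$ has Reeb chords; together with $\chi(L)\le 0$ this gives finiteness. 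Two remarks. First, what your approach buys, if the input is granted, is strictly more: it never uses decomposability, so it would show finiteness of Euler numbers for \emph{all} exact non-orientable fillings, not just decomposable ones; the paper itself gestures at exactly this technique in Section~\ref{sec:rigid-e} (the ungraded Seidel isomorphism of \cite{4-plat}) as a way to restrict topologies of non-orientable fillings. Second, the load-bearing step you flag --- the $\zz/2$, ungraded Seidel isomorphism for non-orientable exact fillings --- is genuinely the whole content of your proof, and your proposed fallback does not avoid it: saying that $H_*(L;\zz/2)$ is ``a subquotient of the linearized complex of $\aug_L$'' is not something you can observe independently; it is precisely (a consequence of) the Seidel isomorphism, so the fallback is circular as written. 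So either cite a reference establishing that isomorphism over $\zz/2$ without orientability hypotheses, or, for the decomposable statement actually claimed, use the canonical-ruling argument, which needs none of this machinery.
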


That said, there is a sequence of knots for which the corresponding sequence of sets of Euler numbers realized by Lagrangian fillings grows without bound; see Theorem~\ref{thm:wh}. 

Throughout the paper, in parallel to the fact that orientable Lagrangian fillings minimize the smooth $4$-genus, we present evidence which suggests that any non-orientable exact Lagrangian filling with normal Euler number $e$ minimizes the crosscap number among smooth fillings with normal Euler number $e$. However, the proof of such a minimization result is made difficult by the lack of an adjunction inequality for non-orientable surfaces.

Just as there is a connection between the HOMFLY polynomial and orientable fillings, the existence of a non-orientable fillings implies the sharpness of the Kauffman polynomial bound on the maximal Thurston-Bennequin number; see Proposition~\ref{prop:ruling-obstr}. 

Finally, extending Atiponrat's work on rulings \cite{atiponrat:obstruction}, we develop techniques for obstructing decomposable non-orientable Lagrangian fillings, including the development of an easily computable obstruction we call the \dfn{resolution linking number}.  This obstruction is applied to prove Theorem~\ref{thm:torus}, below.

In the second half of the paper, we combine the obstructions discussed above with constructions in families to reveal hints of the geography of non-orientably fillable smooth knots. We start by proving:

\begin{thm}
\label{thm:positive}
	If $K$ is positive, then $K$ is orientably, but not non-orientably, decomposably fillable.
\end{thm}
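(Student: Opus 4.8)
The plan is to treat the two assertions of Theorem~\ref{thm:positive} separately; the first is essentially known, and the second carries the content.

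\emph{Orientable decomposable fillability.} This is contained in \cite{positivity}; for concreteness one can argue directly. Take a positive diagram $D$ of $K$ with $c$ crossings and $s$ Seifert circles and Legendrianize it (cf.\ \cite{positivity}) so that every crossing lies in a front region admitting a pinch move and so that the oriented resolution at all $c$ crossings turns $D$ into $s$ standard Legendrian unknots. Because $D$ is positive, each intermediate diagram is positive and each pinch performs the Seifert smoothing, so this procedure is legitimate; capping the $s$ unknots with minimum disks yields a decomposable, orientable, exact Lagrangian filling $L$ with $\chi(L) = s - c = 1 - 2g(K)$, using that the canonical Seifert surface of a positive diagram realizes the genus. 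By Proposition~\ref{prop:tb-euler-filling}, $tb(\leg) = -\chi(L) = 2g(K) - 1$; this is $tb$-maximal, since the slice--Bennequin inequality gives $\overline{tb}(K) \le 2g_4(K) - 1$ and forces $g_4(K) = g_3(K) = g(K)$ for positive $K$.

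\emph{Failure of non-orientable decomposable fillability.} Suppose toward a contradiction that some Legendrian representative $\leg$ of $K$ bounds a decomposable non-orientable exact Lagrangian filling $L$. Being decomposable, $L$ is built from pinch moves, minima, and Legendrian isotopies, and hence records a normal ruling (resolution) of a front of $\leg$; extending Atiponrat~\cite{atiponrat:obstruction}, the non-orientability of $L$ forces this ruling to have an odd number of clasps, equivalently $\rlk$ to be odd. I would then argue that a positive knot cannot meet this condition: the clasp parity of an optimal normal ruling --- equivalently $\rlk \bmod 2$ --- is invariant enough to be read off from any diagram, and on a positive diagram it is even, since every crossing is positive so the oriented resolution is forced at each pinch, no clasp can occur, and every decomposable filling assembled from that diagram is orientable. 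Therefore no normal ruling of any representative of $K$ has an odd clasp count, contradicting the existence of $L$. (As a cross-check, Proposition~\ref{prop:ruling-obstr} shows that a non-orientable filling would force the Kauffman polynomial bound on $\overline{tb}(K)$ to be sharp, so it would alternatively suffice to show that for positive knots this bound is strictly weaker than the sharp HOMFLY bound.)

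\emph{Main obstacle.} The crux is exactly the invariance invoked above: the representative $\leg$ need not resemble the positive diagram of $K$, so one must prove that the clasp parity of an optimal ruling, equivalently $\rlk \bmod 2$, depends only on the smooth type of $K$ (or is preserved by every elementary non-orientable cobordism move and by Legendrian isotopy). That is the real combinatorial work. Minor points along the way: fixing the sign convention for the normal Euler number so the identity $tb(\leg) = -\chi(L) - e(L)$ is unambiguous; establishing $\overline{tb}(K) = 2g(K) - 1$ for all positive knots; and, if one follows the Kauffman route instead, verifying the Kauffman/HOMFLY comparison uniformly, handling small cases such as the trefoil by hand if necessary.
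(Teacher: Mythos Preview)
Your argument for orientable fillability is fine and matches the paper. The non-orientable half, however, has a genuine error: the clasp parity (equivalently $\rlk_2$) does \emph{not} detect orientability of a filling. By Theorem~\ref{thm:rlk} applied to a filling ($\leg_- = \emptyset$), the canonical ruling $\rho_L$ of \emph{any} decomposable filling---orientable or not---satisfies $\rlk_2(\rho_L)=0$. So your claim that ``the non-orientability of $L$ forces this ruling to have an odd number of clasps, equivalently $\rlk$ to be odd'' is false; $\rlk_2$ obstructs the existence of decomposable fillings altogether, it does not separate orientable from non-orientable ones. Likewise, your cross-check is misdirected: positive knots are orientably fillable, so the Kauffman bound on $\overline{tb}$ is already sharp for them (orientable rulings count in both polynomials); you cannot hope to show it is strictly weaker than the HOMFLY bound.

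The paper's route replaces clasp parity with \emph{orientability of the canonical ruling}. Proposition~\ref{prop:ori-canonical-ruling} shows that a non-orientable decomposable filling yields a non-orientable canonical ruling, hence (Corollary~\ref{cor:ruling-poly-obstr}) $R_\leg(z)\neq R^o_\leg(z)$. The crucial observation is that, by Theorem~\ref{thm:ruling-poly}, this inequality is read off from coefficients of the HOMFLY and Kauffman polynomials and is therefore a condition on the \emph{smooth} type of $K$ (Remark~\ref{rem:ruling-homfly-k}). Now take the Legendrian representative of a positive knot whose front has only positive crossings \cite{tanaka:max-tb-pos}: every ruling of that front is orientable (all switches sit at positive crossings), so $R_\leg=R^o_\leg$ for that representative, hence for every representative. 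This closes exactly the invariance gap you identified, but via the ruling polynomial rather than $\rlk_2$.
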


We can completely characterize Lagrangian fillability of alternating knots. Cornwell, Ng, and Sivek \cite{cns:obstr-concordance} showed that an alternating knot is orientably fillable if and only if it is positive; we extend their analysis to non-orientable fillings of alternating knots.

\begin{thm}
\label{thm:alternating}
	If $K$ is alternating, then $K$ is non-orientably decomposably fillable if and only if $K$ is not positive.
\end{thm}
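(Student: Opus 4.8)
The plan is to treat the two implications of the biconditional separately. The forward direction is immediate from Theorem~\ref{thm:positive}: if $K$ is positive then $K$ is not non-orientably decomposably fillable. Hence all the work goes into showing that a non-positive alternating knot does admit a decomposable non-orientable Lagrangian filling.

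For that, I would start from a reduced alternating diagram $D$ of $K$. The structural input is that reduced alternating diagrams are plus-adequate; more generally, I expect the right statement to be that \emph{every plus-adequate diagram $D$ carries a Legendrian representative $\leg_D$ --- obtained from $D$ by the standard front-diagram recipe, and realizing the maximal $tb$ in the alternating case --- together with a decomposable Lagrangian filling}. The filling would be built by resolving $D$: perform a sequence of Legendrian pinch (saddle) moves taking the front to its all-$B$ Kauffman state, which is a disjoint union of unknotted, unlinked Legendrian circles, and then cap each circle with a Lagrangian minimum disk. Plus-adequacy is exactly the hypothesis that legitimizes this: because no loop of the all-$B$ state is joined to itself, every intermediate Legendrian unknot appearing in the resolution has $tb = -1$, so each pinch move is an honest saddle cobordism and each capping disk exists as an embedded Lagrangian. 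This produces a decomposable exact Lagrangian filling $L$ of $\leg_D$, hence a decomposable Lagrangian filling of the smooth knot $K$, whose Euler characteristic and (via Proposition~\ref{prop:tb-euler-filling}) normal Euler number are determined by the combinatorics of $D$.

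It then remains to see that $L$ is non-orientable, and this comes essentially for free: were $L$ orientable, $K$ would be orientably decomposably --- hence exactly --- Lagrangian fillable, so by the theorem of Cornwell, Ng, and Sivek \cite{cns:obstr-concordance} on orientable fillings of alternating knots, $K$ would be positive, contradicting our hypothesis. Therefore $L$ is non-orientable, and $K$ is non-orientably decomposably fillable, completing the reverse implication. (One can alternatively identify $L$ with the all-$B$ state surface of $D$ pushed into $B^4$ and observe that this surface is the Seifert surface of $D$ precisely when every crossing of $D$ is positive, so that its non-orientability reflects the non-positivity of $K$; but routing through \cite{cns:obstr-concordance} avoids analyzing exactly when an alternating knot is positive.)

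The step I expect to be the main obstacle is the middle one: proving, for an arbitrary plus-adequate diagram, that ``resolve to the all-$B$ state, then cap'' genuinely yields a decomposable Lagrangian filling. This subdivides into (i) setting up the front-diagram conventions so that the crossings of $D$ are the sites of legitimate Legendrian pinch moves; (ii) using plus-adequacy to rule out any intermediate unknot component becoming stabilized, so that the capping disks are available; and (iii) checking the Euler-characteristic bookkeeping against $tb(\leg_D) = -\chi(L) - e(L)$ from Proposition~\ref{prop:tb-euler-filling}. Granting this construction, both directions of the ``if and only if'' follow formally by combining Theorem~\ref{thm:positive} with \cite{cns:obstr-concordance}.
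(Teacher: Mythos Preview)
Your proposal is correct and, for the construction of the filling, follows the same route as the paper: reduced alternating diagrams are plus-adequate, and the paper deduces Theorem~\ref{thm:alternating} as an immediate corollary of Theorem~\ref{thm:plus-ad}, whose proof is precisely the ``all crossings become switches, then pinch and cap'' construction you sketch (attributed to K\'alm\'an \cite{kalman:+adequate} for the ruling and to \cite[\S5]{positivity} for the filling procedure).

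The one genuine difference is in how you certify that the resulting filling $L$ is non-orientable. You invoke Cornwell--Ng--Sivek \cite{cns:obstr-concordance}: if $L$ were orientable then $K$ would be orientably fillable, hence positive, a contradiction. The paper instead uses its own canonical-ruling machinery (Proposition~\ref{prop:ori-canonical-ruling}): if $L$ is orientable then its canonical ruling $\rho_L$ is orientable, and since $\rho_L$ is switched at every crossing this forces every crossing to be positive, so $K$ is positive. Your parenthetical alternative --- identifying $L$ with the all-$B$ state surface and noting that this is the Seifert surface exactly when all crossings are positive --- is essentially the paper's argument in different language. The trade-off is that your CNS route is a clean black box for the alternating case but does not generalize, whereas the paper's canonical-ruling argument is self-contained and proves the stronger Theorem~\ref{thm:plus-ad} for all plus-adequate knots, from which both Theorems~\ref{thm:positive} and~\ref{thm:alternating} drop out simultaneously.
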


This theorem will follow from a more general result about plus-adequate knots; see Theorem~\ref{thm:plus-ad}.

From these results, one might begin to suspect that non-orientable fillability is complementary to some notion of positivity, but it turns out that the existence of non-orientable fillings is more subtle. First, there are Legendrian knots with both orientable and non-orientable fillings; see Example~\ref{ex:8-21-filling}.  Second, the following two families contain knots which realize every combination of quasipositivity and non-orientable fillability:

\begin{thm}
\label{thm:torus}
	For $p$ and $q$ relatively prime with $|p|>q$, let $T(p,q)$ denote the $(p,q)$ torus knot.
	\begin{enumerate}
	\item $T(p,q)$ is orientably fillable if and only if $p>q>0$.
	\item $T(p,2)$ is non-orientably fillable if $p<0$.
	\item $T(p,q)$ is not fillable if $p<0$ and $q$ is odd.
	\item $T(p,q)$ is not decomposably fillable if $p<0$ and $4|q$.
	\end{enumerate}
\end{thm}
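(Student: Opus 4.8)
The plan is to treat the four parts separately, recycling two main tools: the relationship between Lagrangian fillings and classical invariants (Propositions~\ref{prop:tb-euler-filling} and the ruling/Kauffman obstruction of Proposition~\ref{prop:ruling-obstr}), and the resolution linking number obstruction advertised in the introduction. For part (1), the forward direction is that an orientable filling forces quasipositivity (by \cite{positivity}, using \cite{bo:qp, yasha:lagr-cyl}), and the quasipositive torus knots are exactly the $T(p,q)$ with $p>q>0$; alternatively, one invokes that orientable fillability makes the HOMFLY bound on $tb$ sharp, which fails for negative torus knots. The reverse direction is the classical construction: the positive torus knots $T(p,q)$, $p>q>0$, are positive braids, hence positive knots, hence orientably fillable — this is already subsumed by Theorem~\ref{thm:positive}. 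So part (1) should be essentially a citation.

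For part (2), I would exhibit an explicit decomposable non-orientable filling of a maximal-$tb$ Legendrian representative of $T(p,2)$ with $p<0$. The natural model: $T(p,2) = T(-|p|,2)$ is alternating (indeed a $(2,n)$ torus link/knot), and by Theorem~\ref{thm:alternating} it suffices to observe that $T(p,2)$ with $p<0$ is not positive — negative torus knots are never positive since positive knots have nonnegative signature-type invariants / positive maximal self-linking, whereas $T(p,2)$ with $p<0$ has the ``wrong'' sign. Thus part (2) reduces to part of Theorem~\ref{thm:alternating} (equivalently Theorem~\ref{thm:plus-ad}) applied to this alternating family; one only needs to check the positivity status, which is standard. (The construction in \cite{cst:non-ori-cobordism} for $(p,-2)$ torus knots also covers this, and can be cited as corroboration.)

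For parts (3) and (4), the work is in the obstructions. For part (3), with $p<0$ and $q$ odd, $T(p,q)$ is a knot (not a multi-component link), and I would rule out \emph{any} exact Lagrangian filling — orientable or not, decomposable or not. Orientable fillings are killed by part (1). For non-orientable fillings one combines Proposition~\ref{prop:tb-euler-filling}, $tb(\leg) = -\chi(L) - e(L)$, with the congruence constraint coming from a non-orientable filling: $e(L)$ of a non-orientable surface is always even (normal Euler numbers of closed non-orientable surfaces in $4$-manifolds are even), while $\chi(L)$ of a once-punctured non-orientable surface is $1 - (\text{number of crosscaps})$; combined with the known maximal $tb$ of negative torus knots this forces a parity contradiction precisely when $q$ is odd. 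I expect the cleanest route is via the Kauffman polynomial bound (Proposition~\ref{prop:ruling-obstr}): a non-orientable filling makes that bound sharp, but for $T(p,q)$ with $p<0$, $q$ odd, the Kauffman bound is strictly less than $-1$, which is incompatible with the filling being connected. For part (4), $4 \mid q$ forces $q$ even so $T(p,q)$ could a priori behave like the $q=2$ case, and here the finer resolution linking number obstruction is needed: I would compute the resolution linking number of the standard maximal-$tb$ diagram of $T(p,q)$ with $p<0$, $4\mid q$, and show its parity obstructs a decomposable non-orientable filling, paralleling Atiponrat's clasp-parity argument for $(4,-(2n+5))$ in \cite{atiponrat:torus, atiponrat:obstruction}.

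The main obstacle will be part (4): unlike parts (1)--(3), which reduce to known positivity facts and the alternating/Kauffman machinery, part (4) requires genuinely computing the new resolution linking number invariant on a braid-positive-type diagram of a negative torus knot and extracting a $\bmod 2$ (or $\bmod 4$) obstruction that activates exactly when $4\mid q$ but not when $q \equiv 2 \pmod 4$. Getting the bookkeeping of clasps/resolutions right across the $q$ strands, and confirming the dichotomy between $q\equiv 0$ and $q \equiv 2 \pmod 4$ is genuine (not an artifact of a particular diagram), is where the care is needed; the other three parts I expect to be short.
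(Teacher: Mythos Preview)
Your treatment of parts (1) and (2) matches the paper's: positive torus knots are orientably fillable by \cite{positivity}, the converse follows from $\overline{tb}(T(p,q)) = pq < -1$ for $p<0$ (the paper uses this direct $tb$ argument rather than quasipositivity, but either works), and $T(p,2)$ with $p<0$ is alternating and non-positive, so Theorem~\ref{thm:plus-ad} applies.

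Part (3) has a genuine gap. Your parity sketch does not go through: with $p<0$ and $q$ odd, $p$ may have either parity, so $tb = pq$ imposes no useful congruence on $\chi(L)$ and $e(L)$. More importantly, your Kauffman route misstates the obstruction. Proposition~\ref{prop:ruling-obstr} says that fillability forces the Kauffman bound on $\overline{tb}$ to be \emph{sharp}; what you need is the Epstein--Fuchs computation \cite{epstein-fuchs} that for $p<0$ and $q$ odd the Kauffman bound is \emph{not} sharp (equivalently, the relevant coefficient of the Kauffman polynomial vanishes, so there are no rulings at all). The phrase ``the Kauffman bound is strictly less than $-1$, which is incompatible with the filling being connected'' is not the right statement and does not yield a contradiction.

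Part (4) is on the right track but is missing a step that the paper handles explicitly. Corollary~\ref{cor:rlk} obstructs decomposable fillings only when \emph{every} ruling has $\rlk_2 \neq 0$; computing $\rlk_2$ for one ruling of one diagram is not enough. The paper first proves (Lemma~\ref{lem:torus-rulings}, via Yokota's formula for the Kauffman polynomial) that any maximal-$tb$ representative of $T(p,q)$ with $p<0$ and $q$ even has a \emph{unique} ruling, and only then computes $\rlk_2$ of that ruling (Lemma~\ref{lem:torus-rlk}), obtaining $\rlk_2 \equiv p(\tfrac{q}{2}-1) \bmod 2$, which is $1$ exactly when $4 \mid q$. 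Your worry about the dichotomy being ``an artifact of a particular diagram'' is resolved precisely by this uniqueness, which in turn comes from the Kauffman polynomial and is therefore diagram-independent; you should build that step into your plan.
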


\begin{thm}
\label{thm:pretzel}
Let $p_1,p_2,p_3>0$ and let $K\neq P(-p_1,-p_2,p_2-1)$ be a 3-stranded pretzel knot. Then $K$ is decomposably non-orientably fillable if and only if $K$ is isotopic to a pretzel knot of one of the following forms:
	\begin{enumerate}
	\item $P(p_1,p_2,p_3)$,
	\item $P(-p_1,-p_2,-p_3)$ with exactly one of the $p_i$ even
	\item $P(-p_1,p_2,p_3)$ with $p_1$ odd,
	\item $P(-p_1,-p_2,p_3)$ with $p_1 \geq p_2$, $p_1$ odd, and either $p_2=p_3=1$ or $p_2<p_3$, or
	\item $P(-p_1,-p_2,p_3)$ with $p_1 \geq p_2$, $p_2 \geq p_3+2$, and one of the $p_i$ even.
	\end{enumerate}
\end{thm}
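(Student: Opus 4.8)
The plan is to prove both directions by splitting the argument into a \emph{construction} half (each listed family is decomposably non-orientably fillable) and an \emph{obstruction} half (every other $3$-stranded pretzel knot, apart from the excluded family $P(-p_1,-p_2,p_2-1)$, is not), and to organize everything by the sign pattern of the pretzel parameters. A preliminary step is to normalize: using the permutation and mirror symmetries of $3$-stranded pretzel knots (together with the handful of non-obvious isotopies among pretzel knots), reduce to the standard forms appearing in items (1)--(5), so that the case analysis is over sign and parity patterns of ordered triples only.

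\textbf{Constructions.} Item (1) is essentially free: a pretzel knot $P(p_1,p_2,p_3)$ with all $p_i>0$ is alternating and, in the paper's conventions, not positive, so it is decomposably non-orientably fillable by Theorem~\ref{thm:alternating} --- in fact by the more general plus-adequate result, Theorem~\ref{thm:plus-ad}. For items (2)--(5) I would exhibit explicit decomposable non-orientable fillings directly on fronts. Starting from a suitable Legendrian front for $K$ built from the standard pretzel diagram (the sign of each twist region dictating whether it is drawn as a chain of clasps or a chain of cusped arcs), perform a sequence of pinch (saddle) moves together with births of $tb=-1$ Legendrian unknots where needed, reducing the front to a single $tb=-1$ unknot which caps off with a disk. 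The recipe is local to the three twist regions: an even twist region contributes a half-twisted band that one pinch absorbs, while odd twist regions must be paired off against each other or run against the third strand, and the precise parity/inequality hypotheses in (2)--(5) --- $p_1$ odd in (3); $p_1\ge p_2$ with $p_2=p_3=1$ or $p_2<p_3$ in (4); $p_2\ge p_3+2$ with one $p_i$ even in (5) --- are exactly what guarantee there is enough room to run the moves and to terminate at an unknot. To certify that the resulting filling $L$ is genuinely non-orientable, compute its normal Euler number from Proposition~\ref{prop:tb-euler-filling}, $e(L)=-\tb(\leg)-\chi(L)$; the parity of $\chi(L)$, read off from the tally of pinches and births, forces $e(L)\neq 0$.

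\textbf{Obstructions.} For every other $3$-stranded pretzel knot $K\neq P(-p_1,-p_2,p_2-1)$, show no decomposable non-orientable filling exists, using three tools in tandem. First, those pretzel knots that turn out to be positive are eliminated immediately by Theorem~\ref{thm:positive}. Second, for the remaining cases the workhorse is the resolution linking number $\rlk$ together with the normal-ruling/Kauffman obstruction of Proposition~\ref{prop:ruling-obstr}, extending Atiponrat's clasp-parity obstruction: pretzel diagrams are simple enough that their normal rulings can be enumerated explicitly, and one computes $\rlk$ on the associated resolutions and checks that every ruling that could support a decomposable filling has the wrong clasp parity. Third, Proposition~\ref{prop:finite-euler} together with sharpness of the Kauffman bound on $\tb$ for these (plus-)adequate diagrams pins down the finitely many normal Euler numbers a hypothetical filling could carry, so that the $\rlk$ computation only has to be carried out for a bounded list of ``profiles'' within each infinite family.

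\textbf{Matching up, and the hard part.} The two halves must meet exactly along the boundary of the region (1)--(5): the delicate points are the inequalities and parity clauses separating (4) from (5) and both from the non-fillable region --- for instance the gap at $p_2=p_3$ with $p_2>1$, the gap at $p_2=p_3+1$, and the ``exactly one $p_i$ even'' versus ``one $p_i$ even'' distinctions --- where one must show that \emph{no} sequence of elementary cobordisms (not merely our chosen one) yields a filling, and that the $\rlk$/ruling obstruction genuinely fires. Doing this uniformly across infinitely many sign-and-parity patterns, while keeping the normalization from collapsing or duplicating cases, is where the bulk of the work lies and is the step I expect to be the main obstacle. It is also precisely why $P(-p_1,-p_2,p_2-1)$ is set aside: it sits on the knife's edge where neither our constructions produce a filling nor our obstructions rule one out, so its decomposable non-orientable fillability is left open.
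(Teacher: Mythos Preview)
Your overall architecture (construction half plus obstruction half, organized by sign pattern) matches the paper, and cases (1)--(3) are essentially right: the paper also dispatches (1) and (2) via Theorem~\ref{thm:alternating}, and (3) via Theorem~\ref{thm:plus-ad} together with Theorem~\ref{thm:positive}, so your explicit pinch-move constructions for (2)--(3) are unnecessary though not wrong.

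The genuine gap is in your obstruction for cases (4) and (5). You propose to use $\rlk_2$ and clasp parity, but those obstruct a ruling from being canonical for \emph{any} decomposable filling, orientable or not. Many of the pretzel knots outside the listed region still admit orientable rulings with $\rlk_2=0$ --- for instance $P(-p_1,-p_2,p_3)$ in case (5) with all $p_i$ odd has orientable rulings (the middle row of the paper's Figure~\ref{case5}) --- so your obstruction simply does not fire, yet you must still rule out a \emph{non-orientable} filling. The paper's actual obstruction is different and is exactly the one you omit: Proposition~\ref{prop:ori-canonical-ruling} (equivalently Corollary~\ref{cor:ruling-poly-obstr}) says a non-orientable decomposable filling forces a non-orientable canonical ruling, so it suffices to enumerate \emph{all} normal rulings on Ng's explicit max-$tb$ fronts (Figures~\ref{pretzel_fronts123}--\ref{pretzel_fronts45}) and check that none are non-orientable in the excluded parameter ranges. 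This enumeration, carried out region-by-region starting from the common vertical slice in Figure~\ref{NNOF1}, is the real content of the paper's argument in (4)--(5); the fillings are then built by pinch moves on the non-orientable rulings that do appear.

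Two smaller points. Your test ``$e(L)\neq 0$'' for non-orientability is sufficient when it holds but need not always hold (cf.\ the Klein bottle filling of $m(8_{21})$ with $e=0$ in Example~\ref{ex:8-21-filling}); the paper instead certifies non-orientability by observing that the canonical ruling itself is non-orientable, which is immediate once you have it in hand. And the excluded family $P(-p_1,-p_2,p_2-1)$ is set aside not because the obstructions fail on a known front, but because no max-$tb$ front is known for it (Ng's classification leaves this case open), so the ruling enumeration cannot even be begun.
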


\begin{rem}
The reason we must exclude $K=P(-p_1,-p_2,p_2-1)$ from Theorem \ref{thm:pretzel} is because we do not have a max-$tb$ front diagram for this family; whenever the front in Figure \ref{pretzel_fronts45} is $tb$-maximal, the conditions in case (4) of the theorem apply so that $K$ is not decomposably non-orientably fillable.
\end{rem}

Even with the evidence above, it is difficult to form a precise conjectural description of smooth knot types that are non-orientably fillable.  While one would na\"ively hope to parallel the conjecture that orientable fillings are related to the (quasi)positive hierarchy and the sharpness of the HOMFLY bound, such hopes do not survive encounters with examples.  While there are no positive knots with non-orientable fillings, there exist examples of non-orientably fillable knots that are neither quasipositive nor negative (e.g. $4_1$) and examples that are quasipositive (e.g. $m(8_{21})$, which has both orientable and non-orientable fillings).  In the other direction, the results above about negative torus knots and some pretzel knots show that not all negative knots are non-orientably fillable. In fact, it seems that non-orientably fillable knots are non-positive and, roughly, not \emph{too} negative. 

The remainder of the paper is organized as follows:  in Section~\ref{sec:background}, we collect background information on the normal Euler number and Lagrangian cobordisms, proving a generalization of Proposition~\ref{prop:tb-euler-filling} at the end.  Section~\ref{sec:obstruct} develops obstructions to Lagrangian fillings from normal rulings, including a discussion of canonical rulings and the definition of the resolution linking number.  We then proceed to examine the set of normal Euler numbers realized by Lagrangian fillings, proving Proposition~\ref{prop:finite-euler}.  Finally, Sections~\ref{sec:plus-ad}, \ref{sec:torus}, and \ref{sec:pretzel} examine the fillability of plus-adequate knots, torus knots, and $3$-stranded pretzel knots, respectively.

\subsection*{Acknowledgements}

We thank John Baldwin, John Etnyre, David Futer, Chuck Livingston, and Lisa Traynor for stimulating discussions.  The fourth author thanks the Institute for Advanced Study for hosting him during the final preparations of the paper; in particular, this material is based upon work supported by the Institute for Advanced Study.

\section{Background Notions}
\label{sec:background}


In this section, we recall basic notions about properly embedded non-orientable surfaces and Lagrangian cobordisms between Legendrian links.  We assume familiarity with the fundamentals of Legendrian knot theory in the standard contact $\rr^3$; see \cite{ cahn:intro, etnyre:knot-intro, geiges:intro, lisa:intro} for introductions.  The only new material in this section is contained in Proposition~\ref{prop:tb-euler}.

\subsection{The Normal Euler Number}
\label{ssec:euler}

In order to analyze non-orientable surfaces $(F, \partial F) \hookrightarrow ([0,1] \times Y, \{0,1\} \times Y)$ with null-homologous ends, we need an additional topological invariant first defined by Gordon and Litherland \cite{gl:signature}; see also \cite{batson:non-ori-slice, oss:unoriented}.  Any closed interval can stand in for $[0,1]$, and we will make such substitutions below without further comment.

Suppose that $F \subset [0,1] \times Y$ is a properly embedded surface with $\partial F = K_0 \sqcup K_1$ with $K_i \subset\{i\} \times Y$.  Let $F'$ be a small transverse pushoff of $F$ so that the pushoffs $K_i'$ at the ends  both realize the Seifert framing.  We compute the \dfn{(relative) normal Euler number $e(F)$} by finding compatible local orientations for $TF$ and $TF'$ at each intersection point in $F \cap F'$ --- which may be used, together with an ambient orientation on $[0,1] \times Y$, to assign a sign to each intersection --- and then adding up the contributions at the intersection points.

There are several equivalent ways of defining the relative normal Euler number.  Instead of specifying the framing for $F'$ at the ends, we may take Seifert surfaces $\Sigma_i$ for $K_i$, and consider the closed surface $\bar{F} = F \cup \Sigma_0 \cup \Sigma_1$.  We then compute the intersection number of $\bar{F}$ and a small transverse pushoff $\bar{F}'$ to get $e(F)$.  Another equivalent definition is to let $F'$ be the image of a section of the normal $S^1$ bundle of $F$ and let $K_0' \sqcup K_1' = \partial F'$; following \cite{gl:signature} and \cite[Lemma 4.2]{oss:unoriented}, we then see that $e(F) = \lk(K_0,K'_0)-\lk (K_1,K'_1)$.

\begin{rem}
If $F$ is orientable, then $e(F)=0$ since $\bar{F}$ represents the zero class in $H_2([0,1] \times Y; \zz)$.  Further, the normal Euler number is always even, as $\bar{F}$ always represents the zero class in $H_2([0,1] \times Y; \zz/2\zz)$. 
\end{rem}

As noted by Batson \cite{batson:non-ori-slice}, we may use the normal Euler number to refine the minimal $4$-dimensional crosscap number of a knot in $S^3$. The \dfn{$e$-crosscap number} $cr^e(K)$ of a knot $K$ is the minimal first Betti number of all properly embedded non-orientable surfaces $F \subset B^4$ with $\partial F = K$ and $e(F) = e$.  The $4$-dimensional \dfn{crosscap number} $cr_4(K)$ is simply the minimum value of $cr^e(K)$.

\subsection{Lagrangian Cobordisms}
\label{ssec:lagr}

The formal definition of a cylindrical-at-infinity Lagrangian cobordism between Legendrian submanifolds is the following.

\begin{defn}
\label{defn:cob}
	Let $\leg_\pm$ be Legendrian links in the contact manifold $(Y,\xi)$, where $\xi = \ker \alpha$.  An \dfn{(exact) Lagrangian cobordism} $L$ from $\leg_-$ to $\leg_+$ is an exact properly embedded Lagrangian submanifold of the symplectization $(\rr \times Y, d(e^t\alpha))$ satisfying:
	\begin{itemize}
	\item There exists $T_+ \in \rr$ such that $L \cap ([T_+,\infty) \times Y) = [T_+,\infty) \times \leg_+$;
	\item There exists $T_- < T_+$ such that $L \cap ((-\infty,T_-] \times Y) = (-\infty,T_-] \times \leg_-$; and
	\item The primitive of $(e^t\alpha)|_L$ is constant at each end of $L$.
	\end{itemize}
	The cobordism $L$ is a \dfn{filling} if $\leg_- = \emptyset$.
\end{defn}

The final condition in the definition of a cobordism is designed to allow cobordisms to be concatenated while preserving exactness; see \cite{chantraine:disconnected-ends} for a more thorough discussion.

There are three types of \dfn{elementary cobordisms} that are useful for constructing Lagrangian cobordisms:
\begin{description}
\item[Legendrian Isotopy] A Legendrian isotopy from $\leg_-$ to $\leg_+$ induces a Lagrangian cobordism from $\leg_-$ to $\leg_+$, though the construction is somewhat more complicated than simply taking the trace of the isotopy \cite{bst:construct, ehk:leg-knot-lagr-cob, eg:finite-dim}.

\item[$0$-Handle] Adding a disjoint, unlinked maximal Legendrian unknot $\Upsilon$ to $\leg$ induces a Lagrangian $0$-handle cobordism from $\leg$ to $\leg \sqcup \Upsilon$ \cite{bst:construct,ehk:leg-knot-lagr-cob}.

\item[$1$-Handle] Performing an ambient surgery between two cusps of the front projection of $\leg$ as in Figure~\ref{fig:1-handle} induces a Lagrangian $1$-handle cobordism \cite{bst:construct, rizell:surgery, ehk:leg-knot-lagr-cob}.  A $1$-handle attachment is  may be \dfn{oriented} or \dfn{unoriented} depending on the orientation of $\leg$ near the attaching regions; see the figure.  In practice, we will use the operation of ``pinching'' across the co-core of a $1$-handle as we move down a cobordism.
\end{description}

\begin{figure}
\centerline{\includegraphics{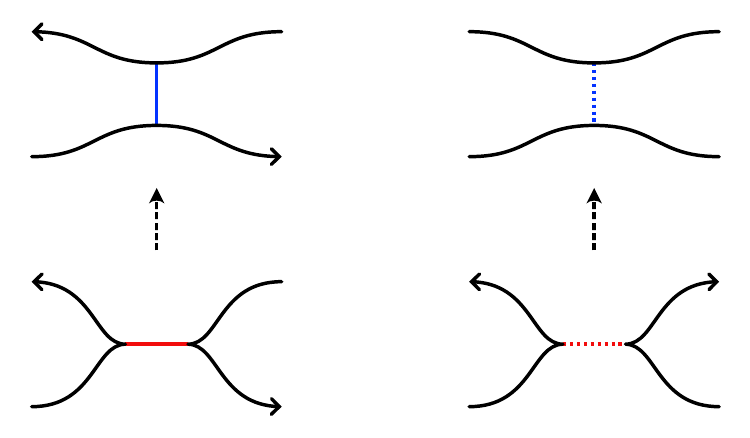}}
\caption{Attaching a $1$-handle along a Legendrian link:  (left) an oriented $1$-handle and (right) an unoriented $1$-handle.}
\label{fig:1-handle}
\end{figure}

A cobordism constructed by concatenating finitely many elementary cobordisms is called \dfn{decomposable}.  It is useful to introduce some notation:  denote the decomposition of $L$ into elementary cobordisms by $L=L_1 \odot \cdots \odot L_n$, with $L_i$ going from $\leg_{i-1}$ to $\leg_i$.  For more information about constructing Lagrangian cobordisms, see \cite{blllmppst:cob-survey}. 

\begin{ex} \label{ex:fig-8-filling}
	The figure-eight knot is non-orientably fillable by a Lagrangian Klein bottle with normal Euler number 4; see Figure~\ref{fig:fig-8}.  The normal Euler number may be computed combinatorially using Proposition~\ref{prop:tb-euler}, below.
\end{ex}

\begin{figure}
\begin{center}
\labellist
\small\hair 2pt
 \pinlabel {$1$-handles} [l] at 99 162
 \pinlabel {Isotopy} [l] at 99 45
\endlabellist
\includegraphics{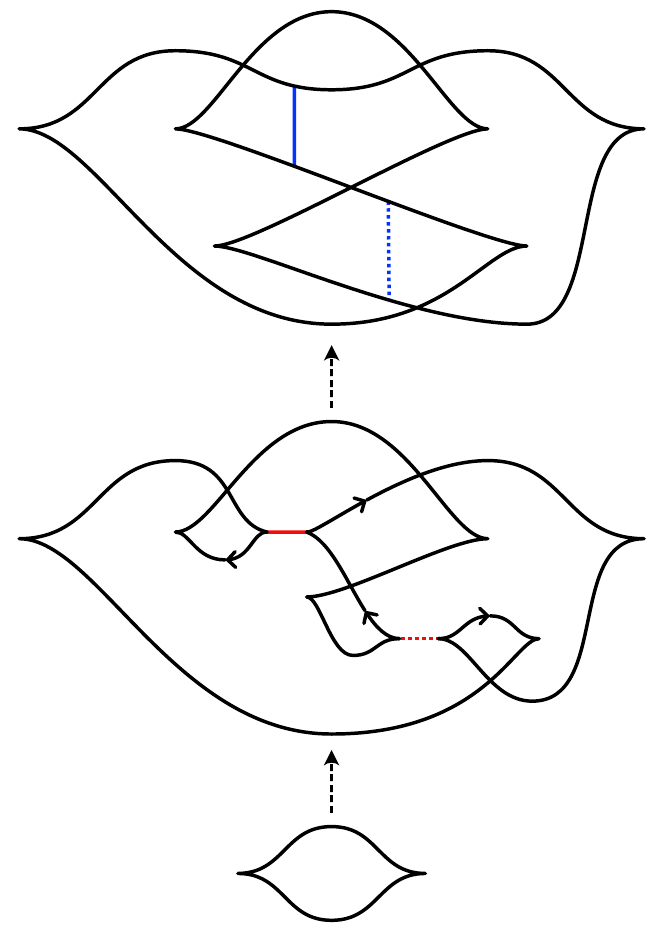}
\caption{A $0$-handle and isotopy produce the unknot in the middle. Two subsequent $1$-handles yield a filling of a Legendrian figure-eight knot by a Klein bottle with normal Euler number 4. }
\label{fig:fig-8}
\end{center}
\end{figure}

\begin{ex} \label{ex:8-21-filling}
	The $m(8_{21})$ knot is orientably fillable by a Lagrangian torus and non-orientably fillable by a Lagrangian Klein bottle with normal Euler number $0$; see Figure~\ref{fig:8-21}. 
\end{ex}

\begin{figure}
\begin{center}
\labellist
\small\hair 2pt
 \pinlabel {$1$-handles} [r] at 73 349
 \pinlabel {Isotopy} [r] at 73 229
 \pinlabel {$1$-handles} [l] at 288 350
 \pinlabel {Isotopy} [r] at 268 225
 \pinlabel {Isotopy} [r] at 268 178
 \pinlabel {$1$-handle} [l] at 292 113
 \pinlabel {Isotopy} [r] at 268 52
\endlabellist
\includegraphics{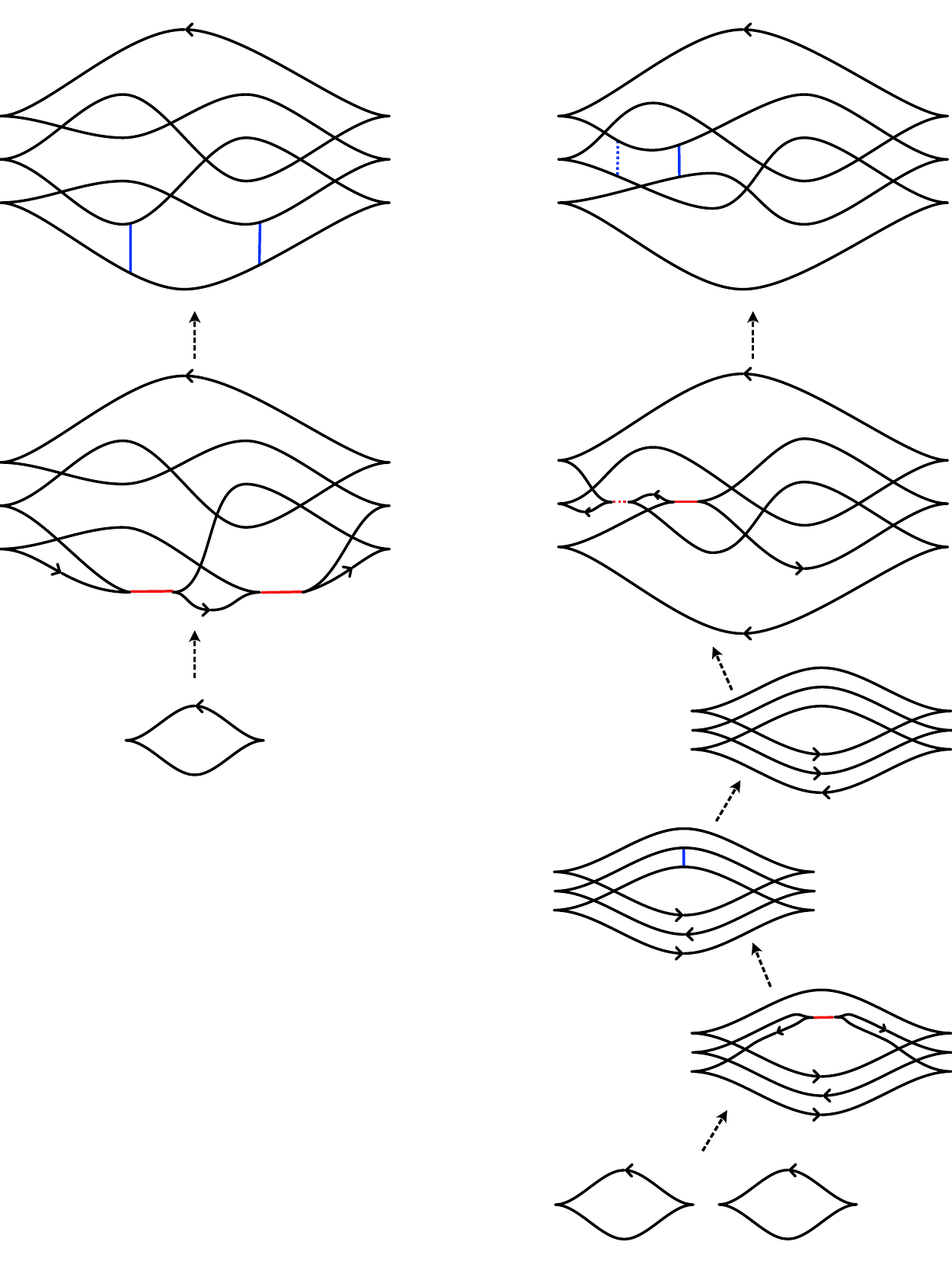}
\caption{Fillings of a Legendrian $m(8_{21})$ knot by a torus (left) and by a Klein bottle with normal Euler number 0 (right).  The middle isotopy at right is a cyclic permutation of the $3$-copy of the unknot \cite{kirill}. }
\label{fig:8-21}
\end{center}
\end{figure}

We finish this section by relating the normal Euler number, Euler characteristic, and Thurston-Bennequin numbers of a Lagrangian cobordism; Proposition~\ref{prop:tb-euler-filling} is an immediate corollary.

\begin{prop} \label{prop:tb-euler}
	If $L$ is a Lagrangian cobordism from $\leg_-$ to $\leg_+$ with normal Euler number $e(L)$, then
	\[ tb(\leg_+) - tb(\leg_-) = - \chi(L) - e(L). \]
\end{prop}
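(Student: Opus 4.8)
The plan is to reinterpret the normal Euler number $e(L)$ as a \emph{relative} Euler number of the normal bundle $\nu L$ of $L$ inside the symplectization $\rr \times Y$, computed with respect to the Seifert framing along the two ends, and then to replace that framing by the contact (Reeb) framing. Passing from the Seifert to the contact framing at $\leg_\pm$ changes the relative Euler number by $\tb(\leg_+)-\tb(\leg_-)$ essentially by the definition of the Thurston--Bennequin number, while the Lagrangian condition forces the contact-framed relative Euler number to equal $-\chi(L)$. This is Chantraine's argument for the orientable case \cite{chantraine}, run while carrying the normal Euler class along; the only new feature is that $\nu L$ need no longer be relatively trivial.

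In detail, I would first truncate $L$ to a compact surface properly embedded in $[T_-,T_+]\times Y$ with $\leg_\pm$ at the levels $T_\pm$ and $L$ cylindrical in collars of the ends. Unwinding the definitions of $e(L)$ recalled in Section~\ref{ssec:euler}, a small transverse pushoff of $L$ realizing the Seifert framing at both ends meets $L$ in precisely the relative Euler number of $\nu L$ with respect to the Seifert boundary trivialization; that is the definition of $e(L)$. Since $\nu L$ is a rank-two bundle over a surface with non-empty boundary it is trivial, so the relative Euler number is defined for any boundary trivialization, and switching from the Seifert to the contact framing at $\leg_\pm$ changes it by $\tb(\leg_+)-\tb(\leg_-)$: at each end the contact framing differs from the Seifert framing by $\tb(\leg_\pm)$ twists, and the two ends contribute with opposite signs because one is the outgoing and the other the incoming end for the symplectization coordinate $t$ --- exactly as in the formula $e(L)=\lk(\leg_-,\leg_-')-\lk(\leg_+,\leg_+')$. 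Thus the relative Euler number of $\nu L$ with respect to the contact framing at both ends equals $e(L)+\tb(\leg_+)-\tb(\leg_-)$.

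For the final input I would use that $L$ is Lagrangian, which gives a canonical isomorphism $\nu L \cong T^*L$ via $v\mapsto \omega(v,\cdot)|_{TL}$. In a collar $[T_+-\epsilon,T_+]\times\leg_+$ of an end, a short computation with $\omega=d(e^t\alpha)$ --- using $\iota_R d\alpha=0$ and $\alpha(R)=1$, which give $\omega(R,\partial_t)=-e^t$ and $\omega(R,\cdot)|_{T\leg_+}=0$ --- identifies the Reeb pushoff direction with a nonvanishing multiple of the covector $dt$, and hence, after dualizing, identifies the contact framing of $\nu L$ along $\partial L$ with the $\partial_t$ framing of $TL$ along $\partial L$. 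Since $\partial_t$ is the outward normal to $\partial L$ in $L$ along $\leg_+$ and the inward normal along $\leg_-$, the Poincar\'e--Hopf theorem with mixed boundary conditions (using that the boundary circles have vanishing Euler characteristic) shows the relative Euler number of $TL$ with this framing is $\chi(L)$; dualizing negates it, so the contact-framed relative Euler number of $\nu L$ is $-\chi(L)$. Comparing with the previous paragraph gives $\tb(\leg_+)-\tb(\leg_-)=-\chi(L)-e(L)$.

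The step I expect to demand the most care is the sign and orientation bookkeeping, since $L$ is non-orientable and there is no global orientation to anchor the relative Euler numbers or the Poincar\'e--Hopf count. I would handle this exactly as Section~\ref{ssec:euler} handles the definition of $e(L)$: every signed count takes place either in a collar of an end (an annulus, hence orientable) or at isolated zeros of generic sections (inside coordinate balls), so compatible local orientations suffice and global consistency is guaranteed because the relevant capped-off surface is null-homologous mod $2$. The one place where the overall normalization must genuinely be pinned down is the identification of the Reeb pushoff with the $dt$-direction (and its sign) under $\nu L\cong T^*L$; this can be checked against the model Lagrangian disk filling the maximal-$\tb$ unknot, where $\tb(\leg_+)=-1$, $\chi(L)=1$, and $e(L)=0$, and against the cylinder over a fixed Legendrian, where both sides vanish --- the latter example already forcing the $\tb(\leg_-)$ term to enter with a minus sign.
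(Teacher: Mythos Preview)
Your proposal is correct and follows essentially the same approach as the paper: the paper makes the identification $\nu L \cong TL$ concretely via an almost complex structure $J$ (so that the gradient $X$ of a Morse function equal to $t$ near the ends gives a normal section $JX$ with Reeb/contact framing at the ends and zeros contributing $-\chi(L)$), whereas you phrase the same idea abstractly via $\nu L \cong T^*L$ and Poincar\'e--Hopf. One small slip: your claim that $\nu L$ is trivial because $L$ has boundary fails precisely when $L$ is non-orientable (indeed $w_1(\nu L)=w_1(L)\neq 0$ in an oriented ambient $4$-manifold), but this is not actually needed---the relative Euler number is still well-defined via local orientations and the ambient orientation, exactly as you note in your final paragraph, so the argument goes through unchanged.
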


\begin{proof}
We prove that $e(L) = -tb(\leg_+) + tb(\leg_-)  - \chi(L)$ by calculating the normal Euler number from the definition.  It will prove useful to introduce an almost complex structure $J$ on the symplectization $\rr \times Y$ that leaves the contact planes invariant and sends the symplectization direction to the Reeb vector field $R_\alpha$.  Note that this choice of almost complex structure orients $\rr \times Y$ in a manner consistent with the symplectic structure.
	
	Consider a Morse function $f: L \to \rr$ that agrees with the symplectization coordinate $t$ outside of $[T_-,T_+] \times Y$.  Let $X$ be the gradient of $f$ (with respect to some metric).  Since $L$ is Lagrangian, the vector field $JX$ is normal to $L$ when nonzero, with $JX = R_\alpha$ outside of $[T_-,T_+] \times Y$.  Pushing $L$ off along $JX$ yields a surface $L'$ that intersects $L$ at the critical points of $f$ and has the contact (Thurston-Bennequin) framing at the ends.  We correct the framing of $L'$ at the ends by adjoining annuli that interpolate between $L' \cap \{T_\pm\} \times Y$ and pushoffs of $L \cap \{T_\pm \pm 1\} \times Y$ that realize the Seifert framing at each end; call the result $L''$.
	
	It remains to compute the signed intersection number of $L$ and $L''$.  The annuli contribute $-tb(\leg_+) + tb(\leg_-)$.  The remaining intersection points arise from the critical points of $f$.  A local computation shows that each such intersection point $p$ contributes $(-1)^{1+\ind_p f}$, which yields a total contribution of $-\chi(L)$ from the critical points of $f$. The result follows.
\end{proof}

\section{Obstructions to Lagrangian Fillings from Normal Rulings}
\label{sec:obstruct}


In this section, we use normal rulings of front diagrams to analyze decomposable Lagrangian fillings, both orientable and not. We begin with the foundational definitions of normal rulings, including their Euler characteristics and a new quantity that we term the \dfn{normal Euler number} of a ruling. We then recall that a decomposable filling yields a canonical ruling of the Legendrian link at the top \cite{atiponrat:obstruction}, and we connect the topology of a Lagrangian cobordism with the orientability, Euler characteristic, and normal Euler number of the canonical ruling.  Finally, we  attach a quantity called the \dfn{resolution linking number} to a ruling and show that it is invariant for rulings related by decomposable cobordism, hence yielding an obstruction to the existence of a decomposable filling.  In all of this work, the non-orientable setting is the more subtle because of the non-vanishing of the Euler number.

\subsection{Normal Rulings}
\label{ssec:rulings}

A normal ruling is a combinatorial structure on the front diagram of a Legendrian link inspired by the theory of generating families; see \cite{ruling-survey} for a broader overview.  Essentially, a ruling is a decomposition of a front diagram of a Legendrian $\leg$ into a set of disks (called ``ruling disks"), each of which is planar isotopic to a maximal Legendrian unknot, with additional restrictions to control the interaction of the disks where they meet. In light of Theorem~\ref{thm:ruling-poly}, below, we will conflate notation for a Legendrian knot and its front diagram in this paper.

To define a \dfn{normal ruling} on $\leg$, we assume that the $x$ coordinates of all crossings and cusps are distinct.  A normal ruling consists of a set $\rho$ of crossings of $\leg$, called \dfn{switches}, that satisfies a set of combinatorial conditions.  To elucidate those conditions, let $\leg^\rho$ denote a new Legendrian link obtained from $\leg$ by resolving the switches of $\rho$ into horizontal line segments as in Figure~\ref{fig:ruling-defn}(a).  We say that the components of $\leg^\rho$ to which the new horizontal line segments belong are \dfn{incident} to the switch.  The components of $\leg^\rho$ must satisfy the following three conditions:
\begin{enumerate}
\item Each component of $\leg^\rho$ is planar isotopic to the standard diagram of the maximal Legendrian unknot.  In particular, each component bounds a \dfn{ruling disk} in the plane.
\item Exactly two components are incident to each switch.
\item Inside a small vertical strip around each switch, the ruling disks incident to the crossing are either nested or disjoint; see Figure~\ref{fig:ruling-defn}(b).
\end{enumerate}

\begin{figure}
\centerline{\includegraphics{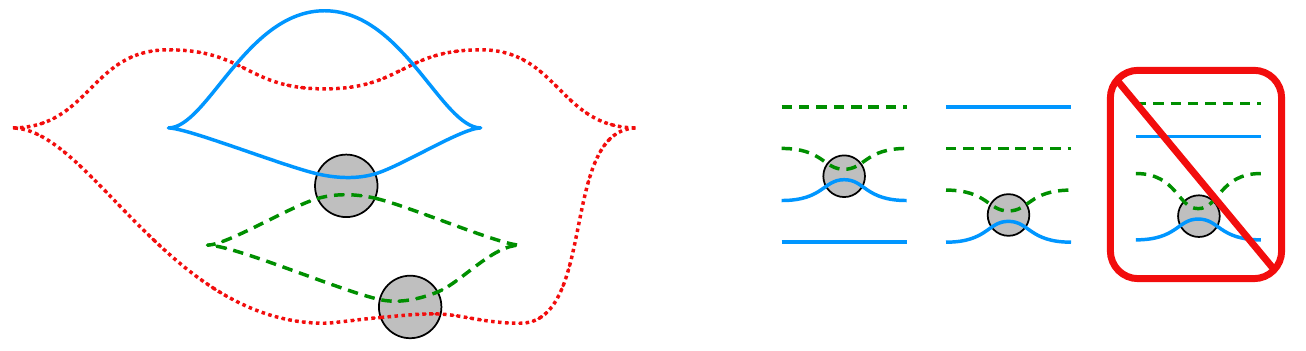}}
\caption{(Left) The resolution of switches in a ruling of a figure-eight knot.  (Right) Possible configurations of ruling disks at a switch in a ruling, up to reflection about the horizontal.  After this figure, we will no longer draw rulings with resolved crossings.}
\label{fig:ruling-defn}
\end{figure}

A ruling $\rho$ is \dfn{oriented} if all of its switches are positive crossings and \dfn{unoriented} otherwise.

We attach two quantities to a normal ruling, the first a well-known analogue of the Euler characteristic and the second a novel analogue of the normal Euler number.  We first set some notation. Denote by $c(\leg)$ the number of right cusps.  Let $s_+(\rho)$ (resp.\ $s_-(\rho)$) be the number of positive (resp.\ negative) switches in $\rho$, with $s(\rho) = s_+(\rho) + s_-(\rho)$.  Finally, fix orientations $o$ on $\leg$ and $o^\rho$ on $\leg^\rho$ and consider the crossings of $\leg^\rho$ where the sign of the crossing coming from $o$ differs from the sign coming from $o^\rho$; we refer to those crossings as \dfn{flipped crossings} and the sections of $\leg^\rho$ on which $o$ and $o^\rho$ disagree the \dfn{flipped region} or \dfn{flipped strands}.  Let $f_+(\rho, o, o^\rho)$ (resp.\ $f_-(\rho, o, o^\rho)$) be the number of flipped crossings of $\leg^\rho$ with positive (resp.\ negative) sign with respect to $o^\rho$.  See Figure~\ref{fig:s-delta} for an illustration of this notation.

\begin{figure}
\labellist
\small\hair 2pt
 \pinlabel {$s_+$} [l] at 164 126
 \pinlabel {$s_+$} [r] at 144 90
 \pinlabel {$s_+$} [l] at 204 77
 \pinlabel {$s_-$} [r] at 39 58
 \pinlabel {$s_-$} [b] at 110 52
 \pinlabel {$f_+$} [r] at 55 76
 \pinlabel {$f_+$} [l] at 92 77
\endlabellist
\centerline{\includegraphics{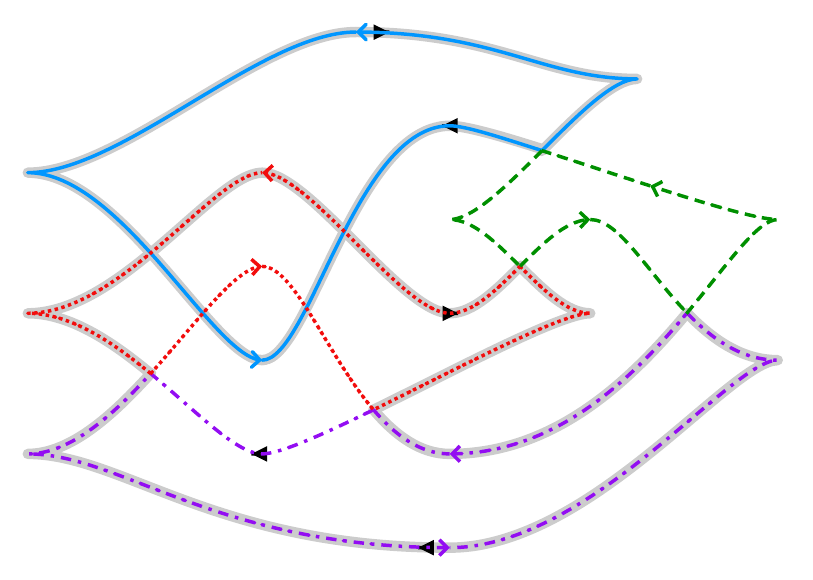}}
\caption{For this Legendrian $\leg$, ruling $\rho$, and orientations $o$ (solid arrows) and $o^\rho$ (open arrows), we have $s_+(\rho) = 3$, $s_-(\rho) = 2$, $f_+(\rho, o, o^\rho) = 2$, and $f_-(\rho, o, o^\rho) = 0$. The flipped region is shaded. Thus, we have $\chi(\rho) = -1$ and $e_2(\rho) \equiv 0$. }
\label{fig:s-delta}
\end{figure}

We are now ready to define the Euler numbers of a ruling.

\begin{defn} [\cite{chv-pushkar}]
The \dfn{Euler characteristic} of a ruling $\rho$ is an integer defined by
\[ \chi(\rho) = c(\leg) - s(\rho).\]
\end{defn}

K\'alm\'an \cite{kalman:spanning} observed that $\chi(\rho)$ is the Euler characteristic of the (possibly immersed) \dfn{ruling surface} $\Sigma_\rho$ constructed by connecting the ruling disks of $\rho$ by bands at the switches.

\begin{defn}
The \dfn{normal Euler number} of a ruling $\rho$ and orientations $o$ and $o^\rho$ is the element of $\zz/2\zz$ defined by
\[ e_2(\rho) \equiv s_-(\rho) + f_+(\rho,o,o^\rho) - f_-(\rho,o,o^\rho)  \mod 2.\]
\end{defn}

It follows immediately from the definition that if $\rho$ is orientable, then $e_2(\rho) = 0$.  To justify the notation $e_2(\rho)$, we prove the following lemma.

\begin{lem} \label{lem:e-independent}
	The normal Euler number of a ruling does not depend on the orientations $o$ and $o^\rho$.
\end{lem}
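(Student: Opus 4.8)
The plan is to show that the quantity $e_2(\rho)$ is unchanged when we vary $o$ and $o^\rho$ independently, by analyzing the effect of reversing the orientation on a single connected component at a time. Since any two orientations of a link differ by reversing orientation on a subset of components, and reversing orientation on a subset can be achieved by reversing one component at a time, it suffices to handle these elementary moves. I would split into two cases: (i) changing $o$ on a component $C$ of $\leg$, and (ii) changing $o^\rho$ on a component $C'$ of $\leg^\rho$.

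First I would treat the change of $o^\rho$ on a single component $C'$ of $\leg^\rho$. Note that $s_\pm(\rho)$ depends only on $\rho$ and $o$, not on $o^\rho$, so only the terms $f_+ - f_-$ can change. Reversing $o^\rho$ on $C'$ flips the $o^\rho$-sign of exactly those crossings of $\leg^\rho$ having exactly one strand on $C'$ (crossings with both strands on $C'$ or neither strand on $C'$ keep their sign). Among such crossings, the flipped ones (those where $o$ and $o^\rho$ disagree) toggle their contribution between $f_+$ and $f_-$, while the non-flipped ones become flipped and vice versa. Working mod $2$, I would check that the net change in $f_+ - f_-$ is congruent to the number of crossings of $\leg^\rho$ with exactly one strand on $C'$, which in turn equals mod $2$ the ordinary linking number parity $2\,\mathrm{lk}(C', \leg^\rho \setminus C')$, hence is even. (Here one must be slightly careful that at a switch the resolved horizontal segments belong to specified components of $\leg^\rho$, but this does not affect the count of crossings with one strand on $C'$.) So $e_2(\rho)$ is unchanged.

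Next I would treat the change of $o$ on a single component $C$ of $\leg$. Here both $s_-(\rho)$ and the flipped data can change. A switch $x \in \rho$ has two incident components of $\leg^\rho$; reversing $o$ on $C$ changes the sign (in the $o$-convention) of a crossing or switch exactly when exactly one of its two strands lies on $C$ before resolution. For such switches, $s_+ \leftrightarrow s_-$ toggles, contributing a change to $s_-(\rho)$ equal mod $2$ to the number of switches of $\rho$ with exactly one strand on $C$. Simultaneously, the set of flipped crossings changes because the $o$-signs change on crossings with one strand on $C$, while $o^\rho$ is held fixed; I would track that the induced change in $f_+ - f_-$ mod $2$ equals the number of crossings of $\leg^\rho$ in the ``half-$C$'' region, and then show the two changes cancel mod $2$. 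The cleanest way to organize this is to observe that reversing $o$ on $C$ has the same effect as simultaneously reversing $o^\rho$ on each component of $\leg^\rho$ that inherits (part of) $C$, together with a correction at the switches incident to $C$; using case (i) to absorb the $o^\rho$-reversals reduces case (ii) to a purely local check at the switches on $C$.

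The main obstacle I expect is the bookkeeping at switches: when we resolve a switch, a strand of $\leg$ on $C$ may be ``cut'' so that the two resulting arcs lie on different components of $\leg^\rho$, and the flipped region is defined on $\leg^\rho$ rather than on $\leg$. So the naive correspondence ``crossing of $\leg$ with one strand on $C$'' versus ``crossing of $\leg^\rho$ with one strand on the image of $C$'' can fail precisely at the switches of $\rho$ incident to $C$, and it is exactly there that the $s_-$ term compensates. Getting the parity signs to line up through this resolution step, and verifying that the local contribution of each incident switch is $+1$ mod $2$ to both the $s_-$ change and the $f_+ - f_-$ change (so they cancel), is the delicate part; everything else is a standard linking-number parity argument.
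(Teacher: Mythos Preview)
Your case~(i) is precisely the paper's argument: observe that $s_-(\rho)$ does not depend on $o^\rho$, then show that reversing $o^\rho$ on a single component $C'$ of $\leg^\rho$ changes $f_+ - f_-$ by $\pm 2\,\lk(C',\leg^\rho\setminus C')$, hence by an even integer. (One small slip in your description: a flipped crossing with one strand on $C'$ does not ``toggle between $f_+$ and $f_-$''; rather its flipped status toggles, so it leaves the $f$-count entirely while a previously non-flipped crossing enters. Your mod~$2$ conclusion is nonetheless correct, since each affected crossing contributes $\pm 1$ to the change.)

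Your case~(ii), varying $o$ on a component of $\leg$, is not treated by the paper at all. In the paper's context $\leg$ is a knot, so the only change of $o$ is global reversal, which preserves every crossing sign and hence leaves $s_-$, the set of flipped crossings, and $f_\pm$ all unchanged; the paper simply asserts that $s_-(\rho)$ is orientation-independent and proceeds directly to the $o^\rho$ analysis. Your sketch for the multi-component case is plausible but remains only a sketch: the reduction ``reverse $o$ on $C$ equals reverse $o^\rho$ on the components of $\leg^\rho$ meeting $C$, plus a correction at switches'' would still require the local parity check at each incident switch that you flag as delicate but do not carry out. For the lemma as stated and used in the paper, case~(ii) is unnecessary.
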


\begin{proof}
	Since $s_-(\rho)$ does not depend on the choices of orientation, we need only prove that $f_+(\rho,o,o^\rho) - f_-(\rho,o,o^\rho)$ does not depend on $o$ and $o^\rho$ modulo $2$.  In fact, it suffices to prove that $f_+(\rho,o,o^\rho) - f_-(\rho,o,o^\rho)$ is invariant under a change of orientation of a single component of the link $\leg^\rho$. 
	
	Denote by $\pi^\rho$ the orientation of $\leg^\rho$ obtained by switching the orientation on a single component $\leg^\rho_0 \subset \leg^\rho$.  Let $\lk_0 (o^\rho)$ denote the sum of the linking numbers between $\leg^\rho_0$ and all of the other components of $\leg^\rho$ with respect to the orientation $o^\rho$.  If we label the flipped crossings of $\leg_0^\rho$ by $f^0_\pm(\rho,o,o^\rho)$ and the non-flipped crossings of $\leg_0^\rho$ by $\bar{f}^0_\pm(\rho,o,o^\rho)$, then we may compute
\begin{equation} \label{eq:lk0}
	\lk_0(o^\rho) = \frac{1}{2}\left(f^0_+(\rho,o,o^\rho) + \bar{f}^0_+(\rho,o,o^\rho) - f^0_-(\rho,o,o^\rho) - \bar{f}^0_-(\rho,o,o^\rho)  \right).
	\end{equation}
	
	Further, notice that passing from $o^\rho$ to $\pi^\rho$ swaps the sets of flipped and non-flipped crossings on $\leg_0^\rho$, and also reverses the signs of those crossings.  That is, we obtain 
\begin{equation} \label{eq:f0}
	\begin{split} 
		f^0_+(\rho,o,\pi^\rho) &= \bar{f}^0_-(\rho,o,o^\rho) \\
		f^0_-(\rho,o,\pi^\rho) &= \bar{f}^0_+(\rho,o,o^\rho)
	\end{split}
\end{equation}
	
	Combining Equations~\eqref{eq:lk0} and \eqref{eq:f0} yields the following computation:
\[ e_2(\rho,o,o^\rho) - e_2(\rho, o, \pi^\rho) =  2 \lk_0 (o^\rho) \equiv 0 \mod 2,\]
which completes the proof of the lemma.
\end{proof}

We may organize the set of rulings of a front diagram into the \dfn{ruling polynomial} \cite{chv-pushkar}: 
\[R_\leg(z) = \sum_{\text{Rulings } \rho} z^{1-\chi(\rho)}.\]
The \dfn{oriented ruling polynomial} $R_\leg^o(z)$ is defined similarly by summing over oriented rulings.  The ruling polynomials $R_\leg$ and $R_\leg^0$ are Legendrian --- in fact, smooth --- invariants, thus justifying our conflation of a Legendrian knot and its front diagram in our notation.\footnote{One can upgrade the oriented ruling polynomial to a \emph{graded} ruling polynomial, which is an effective Legendrian (as opposed to smooth) invariant \cite{chv-pushkar}.}

\begin{thm}[\cite{rutherford:kauffman}]
\label{thm:ruling-poly}
	Given a Legendrian link $\leg$ with front diagram $D$, 
	\begin{enumerate}
	\item The ruling polynomial $R_\leg(z)$ is the coefficient of $a^{-tb(\leg)-1}$ in the Kauffman polynomial $F_\leg(a,z)$. 
	\item The oriented ruling polynomial $R_\leg^o(z)$ is the coefficient of $a^{-tb(\leg)-1}$ in the HOMFLY polynomial $P_\leg(a,z)$. 
	\end{enumerate}
\end{thm}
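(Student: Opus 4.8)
The plan is to prove Rutherford's theorem by computing $F_\leg$ (part (1)) and $P_\leg$ (part (2)) recursively from a front diagram $D$ of $\leg$ via the Kauffman, respectively HOMFLY, skein relations, keeping track of the coefficient of the extreme power of $a$ at each stage. I describe part (1); part (2) runs in parallel, with the HOMFLY relation in place of the Kauffman relation and with the sum restricted to oriented rulings. The point that makes (2) work is that the orientation-preserving smoothing appearing in the HOMFLY recursion agrees with the horizontal ``switch'' resolution of Figure~\ref{fig:ruling-defn}(a) at crossings of one sign and inserts a pair of cusps at the other, so that only rulings all of whose switches are positive crossings --- i.e., the oriented rulings --- can contribute to the extreme coefficient.

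I would begin with the base case: a crossingless front is a disjoint union of $k$ standard Legendrian unknots, so $tb(\leg) = -k$ and $F_\leg = \delta^{k-1}$ with $\delta = (a + a^{-1})z^{-1} - 1$; hence the coefficient of $a^{-tb(\leg)-1} = a^{k-1}$ is $z^{1-k}$, which equals $\sum_\rho z^{1-\chi(\rho)}$ as well, since $\leg$ has a unique normal ruling, with no switches and $k$ right cusps. For the inductive step I would pick a crossing $x$ of $D$ and apply the skein relation, expressing $F_\leg(D)$ as a $z$-weighted combination of $F$ on three diagrams: the front $D_x$ in which $x$ is replaced by its horizontal (switch) resolution; the diagram in which $x$ is replaced by the ``vertical'' resolution, which deletes $x$ but creates one new left cusp and one new right cusp; and the diagram with the over/under strands at $x$ interchanged. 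The heart of the argument is then a degree bookkeeping carried out using $tb = w - c$ and the identity $\chi(\rho) = c(\leg) - s(\rho)$: declaring $x$ to be a switch shifts the location of the extreme $a$-power by exactly the amount absorbed by the accompanying factor of $z$, so switch resolutions survive in the coefficient of $a^{-tb(\leg)-1}$ carrying one factor of $z$ each, while the cusp-creating term and the crossing-interchange term must be shown to land only in strictly interior powers of $a$ or to feed back into the induction at smaller complexity.

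Iterating over the crossings turns the coefficient of $a^{-tb(\leg)-1}$ into a sum over the subsets $\rho$ of crossings one may declare to be switches, each such subset contributing $z^{s(\rho)}$ times the top $a$-coefficient of $F$ of the fully resolved diagram $\leg^\rho$. The remaining task is to show that the surviving subsets are precisely the normal rulings, and this is where conditions (1)--(3) of the definition enter: a subset $\rho$ reaches the extreme $a$-power only when $\leg^\rho$ is a disjoint union of standard unknot fronts whose incident ruling disks are nested or disjoint at every switch, any interleaved configuration or any non-unknot component being pushed strictly below. For such $\rho$ the diagram $\leg^\rho$ is a $c(\leg)$-component unlink with top $a$-coefficient $z^{1-c(\leg)}$, so its contribution is $z^{s(\rho)}\cdot z^{1-c(\leg)} = z^{1-\chi(\rho)}$, and summing yields $[a^{-tb(\leg)-1}]\,F_\leg(a,z) = \sum_\rho z^{1-\chi(\rho)} = R_\leg(z)$. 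Running the identical scheme with the HOMFLY relation gives $[a^{-tb(\leg)-1}]\,P_\leg(a,z) = R_\leg^o(z)$, proving (2).

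The step I expect to be the main obstacle is precisely this degree control: showing that the crossing-interchange and cusp-creating terms produced by the skein relations, and the switch subsets violating the normality conditions, never reach the extreme power of $a$. The difficulty is that the skein relations generate auxiliary diagrams that are not themselves fronts --- the interchanged crossing violates the front condition --- so a naive recursion on the number of crossings does not close. The remedy, following Rutherford, is to process the diagram in a controlled order: sweep a vertical line across $D$ from left to right, maintain a formal linear combination of ``partial resolutions'' of the portion to the left of the line, and update it by a local rule at each crossing and cusp encountered, arranged so that every non-switch resolution is simplified against the adjacent cusps or strands and contributes only a writhe factor $a^{\pm 1}$ rather than attaining the top $a$-degree. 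Once this sweeping argument and the resulting a priori bound on $a$-degrees are in place, the extreme-coefficient identity --- hence both parts of the theorem --- follows as above.
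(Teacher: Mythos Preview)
The paper does not prove this theorem; it is quoted from \cite{rutherford:kauffman} as an input and no argument is given beyond the citation. There is therefore nothing in the paper to compare your proposal against.

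As a sketch of Rutherford's original argument your outline is broadly on target: the result is indeed obtained by a skein/state-sum computation in which one tracks the extremal $a$-degree and shows that only normal rulings survive there. You have also correctly located the real work --- the degree control showing that non-normal resolutions and the auxiliary terms from the skein relation fall strictly below the top $a$-power. But as written this is a plan, not a proof: the crucial step is named (``the remedy, following Rutherford, is to process the diagram in a controlled order \ldots'') rather than executed, and the claim that interleaved configurations are ``pushed strictly below'' is asserted without justification. If you want this to stand as a proof you would need to actually carry out the sweep argument, or else cite Rutherford and move on, as the paper does.
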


Combining this result with the upper bounds on the Thurston-Bennequin invariant from the Kauffman and HOMFLY polynomials, we see that if a Legendrian knot $\leg$ has a front diagram with a ruling, then it must maximize $tb$ \cite{rutherford:kauffman}.

\subsection{Fillings and the Existence of Rulings}
\label{ssec:augm-ruling}

A fundamental link between rulings and fillings uses the machinery of Legendrian contact homology, a Floer-type invariant of Legendrian submanifolds.

\begin{prop}
	\label{prop:ruling-obstr}
	If a smooth knot $K$ is fillable, then the Kauffman bound on the maximal Thurston-Bennequin number is sharp and every Legendrian representative with maximal Thurston-Bennequin number has a ruling.
\end{prop}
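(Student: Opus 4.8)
The plan is to reduce the statement to two known facts: the relationship between Legendrian contact homology (equivalently, augmentations of the Chekanov–Eliashberg DGA) and exact Lagrangian fillings, and Fuchs–Rutherford's theorem translating augmentations into normal rulings. First I would recall that an exact Lagrangian filling $L$ of a Legendrian representative $\Lambda$ of $K$ induces, via the functoriality of Legendrian contact homology under exact Lagrangian cobordisms \cite{ehk:leg-knot-lagr-cob, rizell:surgery}, an augmentation $\varepsilon_L$ of the Chekanov–Eliashberg DGA of $\Lambda$ over $\zz/2\zz$ (working over $\zz/2$ sidesteps the orientation and spin subtleties that would otherwise appear for non-orientable $L$). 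Thus fillability of $K$ forces the existence of a Legendrian representative $\Lambda$ whose DGA admits an augmentation.

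The second step invokes the theorem of Fuchs and Rutherford (and Sabloff) that a Legendrian knot front admits a $\zz/2\zz$-graded augmentation if and only if it admits a normal ruling, and moreover that the existence of any augmentation forces $tb(\Lambda)$ to equal the Kauffman polynomial bound. Concretely: if the DGA of $\Lambda$ has an augmentation, then by \cite{rutherford:kauffman} (cited above as Theorem~\ref{thm:ruling-poly}) and the augmentation–ruling correspondence, $\Lambda$ has a ruling, so $1 - \chi(\rho) \geq 0$ for some $\rho$, and $R_\Lambda(z)$ is a nonzero coefficient of $a^{-tb(\Lambda)-1}$ in $F_\Lambda(a,z)$; combined with the Kauffman bound $tb(\Lambda) \leq -\deg_a F_\Lambda$, this pins down $tb(\Lambda)$ as the maximal value and shows the Kauffman bound on $\overline{tb}(K)$ is sharp. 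Finally, to get the statement for \emph{every} max-$tb$ representative, not just the one arising from the filling, I would note that the Kauffman bound is a smooth-invariant bound on $\overline{tb}(K)$, so once it is sharp, any representative $\Lambda'$ with $tb(\Lambda') = \overline{tb}(K)$ realizes the bound; by Theorem~\ref{thm:ruling-poly} the coefficient of $a^{-tb(\Lambda')-1}$ in $F_{\Lambda'}(a,z) = F_K(a,z)$ is exactly $R_{\Lambda'}(z)$, which is therefore nonzero, so $\Lambda'$ has a ruling.

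The main obstacle is the first step: one must be careful that the cobordism-induced map on Legendrian contact homology is genuinely available for \emph{non-orientable} exact Lagrangian fillings. The cleanest route is to work with $\zz/2\zz$ coefficients throughout, where the count of rigid holomorphic disks with boundary on $L$ defining the augmentation is well-defined without any orientation or spin data on $L$; this is exactly the setting in which Ekholm–Honda–Kálmán's functoriality argument applies verbatim, and it matches the $\zz/2$-valued nature of the ruling count. One should also confirm that a \emph{decomposable} non-orientable filling, built from the elementary cobordisms of Section~\ref{ssec:lagr}, induces an augmentation — this follows because each elementary piece (Legendrian isotopy, $0$-handle, unoriented $1$-handle) has a well-understood effect on the DGA and its augmentation variety over $\zz/2\zz$, so one can also give a purely combinatorial proof avoiding holomorphic curves entirely if desired. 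Either way, once an augmentation (equivalently, a ruling of $\Lambda$) is in hand, the Kauffman-sharpness and the passage to arbitrary max-$tb$ representatives are formal consequences of Theorem~\ref{thm:ruling-poly}.
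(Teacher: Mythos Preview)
Your proposal is correct and follows essentially the same approach as the paper: filling $\Rightarrow$ augmentation of the Chekanov--Eliashberg DGA $\Rightarrow$ normal ruling via the Fuchs--Ishkhanov/Sabloff correspondence $\Rightarrow$ sharpness of the Kauffman bound via Theorem~\ref{thm:ruling-poly}. You are more explicit than the paper about working over $\zz/2\zz$ to handle non-orientable fillings and about why the conclusion extends to \emph{every} max-$tb$ representative, but the underlying strategy is identical.
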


\begin{proof} 
Suppose that $K$ has a Legendrian representative $\leg$ with a Lagrangian filling $L$.  The Legendrian contact homology DGA of such a Legendrian has an augmentation \cite{rizell:lifting,ekholm:lagr-cob}.  That augmentation, in turn, yields a ruling of $\leg$ \cite{fuchs-ishk, rulings}.  Theorem~\ref{thm:ruling-poly} then implies both parts of the conclusion.
\end{proof}

\begin{rem}
	If the filling of $K$ is orientable, then the augmentations and rulings are $2$-graded, and hence the ruling is also orientable.
\end{rem}

\subsection{Canonical Rulings for Decomposable Fillings}
\label{ssec:canonical}
 
In this section, we begin to explore a more subtle relationship between rulings and decomposable fillings. Atiponrat \cite[Lemma 2]{atiponrat:obstruction} proved that a decomposable filling $L$ of a Legendrian $\leg$ induces a canonical ruling $\rho_L$ on $\leg$; Pan also commented on this fact in \cite[\S5.5]{pan:aug-category-cob} and noted that the proof extends to cobordisms from $\leg_-$ to $\leg_+$ with a given ruling on $\leg_-$. The proof, in essence, comes from Chekanov and Pushkar's proof that rulings --- even orientable rulings --- are invariant under Legendrian isotopy \cite{chv-pushkar} and the fact that $0$-handles create and $1$-handles merge ruling disks; see Figure~\ref{fig:canonical-ruling} for an illustration of the latter. We note, however, that the canonical ruling of a decomposable filling may not correspond with the ruling produced by Proposition~\ref{prop:ruling-obstr}, though the canonical ruling construction does yield an alternative --- and more elementary --- proof of the proposition in the case that the filling is decomposable.

The main result in this section is to refine the construction of a canonical ruling to relate the orientability of the cobordism $L$ with the orientability of the ruling $\rho_L$. 

\begin{figure}
	\centerline{\includegraphics{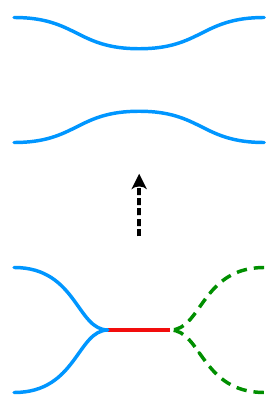}}
  \caption{Constructing a canonical ruling across a $1$-handle.}
  \label{fig:canonical-ruling}
\end{figure}

\begin{prop}
\label{prop:ori-canonical-ruling}
	Suppose $L$ is a decomposable cobordism from $\leg_-$ to $\leg_+$, that $\leg_+$ is connected, and that $\rho$ is an orientable ruling of $\leg_-$.  The canonical ruling $\rho_L$ of $\leg_+$ is orientable if and only if $L$ is.
\end{prop}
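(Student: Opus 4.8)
The plan is to induct on the number of elementary cobordisms in the decomposition $L = L_1 \odot \cdots \odot L_n$, tracking the canonical ruling $\rho_i$ on each intermediate Legendrian $\leg_i$ and showing at each stage that $\rho_i$ is orientable if and only if $L_1 \odot \cdots \odot L_i$ is orientable. Since $\leg_+$ is assumed connected, one should first observe that connectedness of $L$ is equivalent to connectedness of $\leg_i$ for every $i$ in the part of the cobordism ``above'' the first moment the trace becomes disconnected — but more usefully, I would phrase the orientability of $L$ in terms of the elementary pieces: $L$ is orientable if and only if every $1$-handle in the decomposition is an oriented $1$-handle (the $0$-handles and Legendrian isotopies never destroy orientability). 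So the statement to prove reduces to: the canonical ruling $\rho_{L}$ is unoriented (has a negative switch) if and only if at least one $1$-handle in the decomposition is unoriented.

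The key steps, in order. First, handle the base case and the easy elementary moves: for a Legendrian isotopy, Chekanov–Pushkar's invariance of rulings \cite{chv-pushkar} (including the oriented version) shows the canonical ruling's set of switches changes by the standard moves, none of which changes the sign-type of the switch set, so orientability is preserved; for a $0$-handle, a new ruling disk (a maximal unknot) is added with no new switches, so orientability is again preserved, and the trace of a $0$-handle is itself orientable. Second, the crux: analyze a $1$-handle $L_i$ from $\leg_{i-1}$ to $\leg_i$. As in Figure~\ref{fig:canonical-ruling}, the $1$-handle either merges two distinct ruling disks into one or splits one into two, and in either case it introduces exactly one new switch in $\rho_{L_i}$ at the crossing created by the pinch. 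The claim is that this new switch is a positive crossing precisely when the $1$-handle is oriented. This is a local computation: near the attaching region, the orientation of $\leg$ on the two strands being surgered determines whether the resulting crossing, with the orientation induced from the new ruling disk structure, is positive or negative — an oriented $1$-handle (strands oriented compatibly, left of Figure~\ref{fig:1-handle}) yields a positive crossing, an unoriented one yields a negative crossing. Combined with the fact that all pre-existing switches retain their signs under the merge/split (the disks away from the pinch site are unaffected), we get: $\rho_{L_i}$ on $\leg_i$ is orientable iff $\rho_{L_{i-1}}$ on $\leg_{i-1}$ is orientable and $L_i$ is an oriented $1$-handle. Third, assemble: by induction $\rho_L$ is orientable iff every $1$-handle in the decomposition is oriented, which (given the $0$-handle and isotopy traces are orientable, and orientability of a surface built from orientable pieces along oriented $1$-handles is orientable while an unoriented $1$-handle forces non-orientability) is exactly the condition that $L$ is orientable.

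I expect the main obstacle to be the local sign computation at the $1$-handle, specifically making precise the correspondence ``new switch is positive $\iff$ the $1$-handle is oriented,'' because it requires carefully choosing orientations $o$ and $o^\rho$ near the pinch and checking both the merge case and the split case. The subtlety is that after a merge the two old ruling disks become one disk, so one of the two strands incident to the new switch must be reoriented relative to its original orientation in $\leg_{i-1}$ — this reorientation is precisely what creates a flipped region, and Lemma~\ref{lem:e-independent} guarantees the orientability diagnostic ($e_2 = 0$) is insensitive to which orientation convention we adopt, so it suffices to do the computation for one convenient choice. One further point requiring care: since the canonical ruling is only defined when $\leg_+$ (equivalently all $\leg_i$ past a certain point) is connected, and a $1$-handle can disconnect a connected link, I would note that the hypothesis ``$\leg_+$ connected'' combined with the observation that $1$-handles change the number of components by exactly one means the bookkeeping of which $1$-handles merge versus split is constrained, but this does not affect the sign argument, which is purely local at the pinch site and applies identically in both cases.
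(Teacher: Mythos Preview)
Your overall framework---reducing to the claim that $L$ is orientable if and only if every $1$-handle is oriented, and then analyzing the canonical ruling across each elementary piece---matches the paper's approach. However, the heart of your argument, the local analysis of a $1$-handle, rests on a false premise. You assert that a $1$-handle ``introduces exactly one new switch in $\rho_{L_i}$ at the crossing created by the pinch.'' But a $1$-handle creates \emph{no crossing at all}: the local move replaces two adjacent cusps by two parallel horizontal strands (see Figure~\ref{fig:1-handle} and Figure~\ref{fig:canonical-ruling}). Correspondingly, the canonical ruling $\rho_{L_i}$ has exactly the same set of switches as $\rho_{L_{i-1}}$; this is made explicit in the proof of Proposition~\ref{prop:canonical-chi}, where the change $\chi(\rho) \mapsto \chi(\rho)-1$ comes entirely from losing a right cusp, with ``no change to the switches.'' So your proposed diagnostic---sign of the new switch---does not exist.

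The actual mechanism is different. Since the switches are unchanged as crossings of the diagram, what changes under an unoriented $1$-handle is the \emph{orientation of the link}: outside the surgery region, the orientation of $\leg_i$ must disagree with that of $\leg_{i-1}$ along some ``reversing strand,'' and this flips the signs of existing crossings (including switches) along that strand. The paper does not track this directly for the forward direction; instead it simply observes that an oriented $1$-handle leaves the ambient orientation undisturbed, so positive switches stay positive. For the converse, the paper argues from the top down: if $\rho_L$ is orientable, then each ruling disk of $\leg_+$ inherits a coherent orientation, so its top and bottom strands are oppositely oriented, and hence any pinch across that disk is necessarily an \emph{oriented} $1$-handle. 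Iterating, every $1$-handle is oriented and $L$ is orientable. You will also need the paper's connectedness argument (that $L_i \odot \cdots \odot L_n$ is connected because $\leg_+$ is connected and there are no $2$-handles) to justify that an unoriented $1$-handle genuinely produces a M\"obius band in $L$; your parenthetical assertion of this fact is not quite enough, since without connectedness one could reorient a component to make the handle oriented.
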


\begin{proof}
We use the notation $L = L_1 \odot \cdots \odot L_n$.  First note that since $\leg_+$ is connected, the lack of $2$-handles in the decomposition of $L$ implies that $L_i \odot \cdots \odot L_n$ is connected for any $i$.

The key observation is that $L$ is orientable if and only if all of the $1$-handles used to construct $L$ are  oriented.  Indeed, let $L_i$ be the cobordism induced by the topmost non-orientable $1$-handle. Since there are no $2$-handles and $\leg_+$ is connected, we see that $L_{i+1} \odot \cdots \odot L_n$ is connected. Thus, we may form a curve $\gamma$ in $L_{i+1} \odot \cdots \odot L_n$ connecting the two points in the cosphere. As $L_{i+1} \odot \cdots \odot L_n$ is orientable, a neighborhood $N$ of $\gamma$ is a rectangle with proper boundary oriented by $\leg_i$.  Attaching the co-core of the non-oriented $1$-handle to $N$ along $\partial N \cap \leg_i$ yields a M\"obius strip.  Thus, $L_{i} \odot \cdots \odot L_n$ is non-orientable, and hence $L$ is as well. The reverse direction is obvious.

We are now ready to prove the proposition. Suppose that $L$ is orientable. Since Legendrian isotopy and the addition of a $0$-handle preserve orientability of the canonical ruling, we need only consider $1$-handles. The observation above shows that the $1$-handles in $L$ are all oriented.  It is clear that the canonical ruling procedure in Figure~\ref{fig:canonical-ruling} creates an orientable ruling on $\leg_{i}$ from an orientable ruling on $\leg_{i-1}$. Thus, if $L$ is orientable, then $\rho_L$ is orientable if $\rho$ is.

Conversely, suppose that $\rho_L$ is orientable.  Reversing the procedure for extending a canonical ruling across a $1$-handle entails pinching across a ruling disk.  If a ruling disk is oriented, then the Legendrian is oriented in opposite directions along the top and bottom strands of that ruling disk.  Thus, we must have that every $1$-handle in $L$ is oriented.  Thus, we see that $L$ is orientable.
\end{proof}

Proposition~\ref{prop:ori-canonical-ruling} is an effective obstruction to the existence of a decomposable non-orientable filling.  We encapsulate the obstruction in the following corollary, which follows from the idea that a non-orientable decomposable filling of $\leg$ induces a non-orientable ruling on $\leg$, which, in turn, is counted by the ruling polynomial but not the oriented ruling polynomial.

\begin{cor}
\label{cor:ruling-poly-obstr}
	If $\leg$ has a non-orientable decomposable filling, then 
	\[ R_\leg(z) \neq R^o_\leg(z).\]
\end{cor}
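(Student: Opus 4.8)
The plan is to read this off almost immediately from Proposition~\ref{prop:ori-canonical-ruling}. The first step is to recognize a non-orientable decomposable filling $L$ of $\leg$ as a decomposable cobordism from $\leg_- = \emptyset$ to $\leg_+ = \leg$, and to observe that the empty ruling of the empty link is vacuously orientable (it has no switches, so in particular none of its switches is negative). Assuming $\leg$ is connected --- as it is in all of this paper's applications, where $\leg$ is a knot --- the hypotheses of Proposition~\ref{prop:ori-canonical-ruling} are met with $\rho$ the empty ruling, so the canonical ruling $\rho_L$ of $\leg$ produced by Atiponrat's construction \cite{atiponrat:obstruction} is orientable if and only if $L$ is. Since $L$ is non-orientable by assumption, $\rho_L$ is a non-orientable ruling of $\leg$.

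The second step is to convert this into a statement about ruling polynomials. By definition a non-orientable ruling has at least one negative switch, so $\rho_L$ is not one of the oriented rulings of $\leg$; hence it contributes a monomial to $R_\leg(z)$ but not to $R^o_\leg(z)$. I would then write
\[ R_\leg(z) - R^o_\leg(z) = \sum_{\rho \text{ not oriented}} z^{1-\chi(\rho)}, \]
note that the right-hand side is a sum of monomials all with positive coefficient --- so that no cancellation can occur --- and that the sum is non-empty because it contains the term coming from $\rho_L$. Therefore $R_\leg(z) - R^o_\leg(z) \neq 0$, which is exactly the claim.

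This is really a bookkeeping corollary rather than a theorem with a genuine obstacle: the substantive work was already carried out in Proposition~\ref{prop:ori-canonical-ruling}. The only points requiring a moment's care are (i) verifying that the empty ruling legitimately plays the role of the orientable ruling $\rho$ in that proposition, together with the accompanying connectedness hypothesis on $\leg$, and (ii) the non-cancellation observation, which is immediate once one recalls that both ruling polynomials have non-negative integer coefficients and that the monomials indexed by the non-oriented rulings are precisely their difference.
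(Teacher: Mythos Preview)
Your argument is correct and matches the paper's own justification, which is only a one-sentence sketch preceding the corollary: a non-orientable decomposable filling induces (via Proposition~\ref{prop:ori-canonical-ruling}) a non-orientable canonical ruling, which contributes to $R_\leg(z)$ but not to $R^o_\leg(z)$, and positivity of the coefficients prevents cancellation. Your explicit attention to the connectedness hypothesis on $\leg_+$ and to the empty ruling of $\emptyset$ being orientable fills in details the paper leaves implicit, but the route is the same.
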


\begin{ex}
	We saw in Example~\ref{ex:fig-8-filling} that the figure-eight knot has a non-orientable decomposable filling.  As promised by the corollary, one can compute that $R_\leg (z) \neq R^o_\leg(z)$ and hence that the relevant coefficients of the HOMFLY and Kauffman polynomials for the figure-eight knot differ.
\end{ex}

\begin{rem} \label{rem:ruling-homfly-k}
Since the condition on the ruling polynomials in Corollary~\ref{cor:ruling-poly-obstr} can be read off of the HOMFLY and Kauffman polynomials of the underlying smooth knot, it follows that the obstruction to the existence of a decomposable non-orientable filling does not depend on the (maximal) Legendrian representative.
\end{rem}

\subsection{Canonical Rulings and Euler Numbers}

In this section, we relate the topology of a decomposable Lagrangian cobordism to the Euler numbers of the corresponding canonical ruling.

\begin{prop}
\label{prop:canonical-chi}
	If $L$ is a decomposable Lagrangian cobordism from $\leg_-$ to $\leg_+$, $\rho$ a ruling of $\leg_-$, and $\rho_L$ the canonical ruling of $\leg_+$ induced by $L$, then
	\[\chi(\rho_L) - \chi(\rho) = \chi(L).\]
\end{prop}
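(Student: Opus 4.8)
The plan is to prove the identity by induction on the number $n$ of elementary cobordisms in the decomposition $L = L_1 \odot \cdots \odot L_n$, tracking how each elementary piece changes both sides. Write $\leg_{i-1} \to \leg_i$ for the cobordism $L_i$ and let $\rho_i$ denote the canonical ruling of $\leg_i$ induced by $L_1 \odot \cdots \odot L_i$ (so $\rho_0 = \rho$ and $\rho_n = \rho_L$); since Euler characteristic is additive under concatenation of cobordisms, $\chi(L) = \sum_i \chi(L_i)$, and it suffices to show $\chi(\rho_i) - \chi(\rho_{i-1}) = \chi(L_i)$ for each elementary type. Recall that $\chi(\rho) = c(\leg) - s(\rho)$, where $c$ counts right cusps and $s$ counts switches, and that by K\'alm\'an's observation $\chi(\rho)$ is the Euler characteristic of the ruling surface $\Sigma_\rho$; I will use the combinatorial formula for the computation.

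The case analysis runs as follows. For a \emph{Legendrian isotopy} $L_i$, we have $\chi(L_i) = 0$; on the ruling side, Chekanov--Pushkar's invariance of rulings under Legendrian isotopy (the engine behind Atiponrat's canonical-ruling construction) shows the canonical ruling is carried along with $\chi(\rho_i) = \chi(\rho_{i-1})$, so both sides vanish. For a \emph{$0$-handle} $L_i$, we have $\chi(L_i) = 1$ since $L_i$ is (a cylinder) $\sqcup$ (a disk); the canonical ruling gains one new ruling disk (the maximal unknot $\Upsilon$), which contributes one new right cusp and no new switches, so $\chi(\rho_i) - \chi(\rho_{i-1}) = +1$. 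For a \emph{$1$-handle} $L_i$, we have $\chi(L_i) = -1$ since attaching a $1$-handle drops Euler characteristic by one; on the ruling side, Figure~\ref{fig:canonical-ruling} shows that extending the canonical ruling across the $1$-handle introduces exactly one new switch at the crossing created by the pinch (and, accounting for the move in the figure, leaves the count of right cusps unchanged), so $\chi(\rho_i) - \chi(\rho_{i-1}) = -1$. Summing over $i$ gives $\chi(\rho_L) - \chi(\rho) = \chi(L)$.

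The main obstacle is the careful bookkeeping in the $1$-handle case: one must make sure that the local modification of the front in Figure~\ref{fig:canonical-ruling} changes the number of right cusps by the amount one claims, that it adds precisely one switch to $\rho_i$ as opposed to the canonical ruling also forcing a rearrangement of switches elsewhere, and that the ``normality'' conditions (2) and (3) in the definition of a ruling continue to hold so that $\rho_i$ really is a ruling. In other words, the subtlety is entirely in verifying that the local picture in the figure interacts correctly with the global combinatorics of the diagram, and that $\chi(\rho)$ as computed by $c(\leg) - s(\rho)$ genuinely records the change; here the interpretation of $\chi(\rho)$ as the Euler characteristic of the ruling surface $\Sigma_\rho$ provides a useful consistency check, since attaching a band across the $1$-handle to $\Sigma_\rho$ visibly drops its Euler characteristic by one, matching $\chi(L_i) = -1$. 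I would present the Legendrian-isotopy and $0$-handle cases briefly and devote the bulk of the argument to the $1$-handle case, using the figure as the reference point.
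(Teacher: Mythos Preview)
Your overall approach—verifying the identity elementary cobordism by elementary cobordism—is exactly the paper's, and your treatment of the Legendrian isotopy and $0$-handle cases matches the paper's proof. However, your bookkeeping in the $1$-handle case is incorrect, even though by coincidence you still arrive at $\chi(\rho_i) - \chi(\rho_{i-1}) = -1$. The $1$-handle move on a front diagram merges a right cusp and an adjacent left cusp into two parallel strands; it does \emph{not} create any crossing, so there is no ``crossing created by the pinch'' at which to place a new switch. The actual effect on the canonical ruling (this is what Figure~\ref{fig:canonical-ruling} depicts) is the loss of one right cusp with \emph{no} change in the number of switches: the two ruling disks that met at those cusps simply merge into one. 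Thus $c(\leg_i) = c(\leg_{i-1}) - 1$ and $s(\rho_i) = s(\rho_{i-1})$, giving $\chi(\rho_i) - \chi(\rho_{i-1}) = -1$ for the correct reason. With this correction your argument is complete and coincides with the paper's.
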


\begin{proof}
	We check the relation for each of the elementary cobordisms. The equation holds for cobordisms induced by Legendrian isotopies since $\chi(\rho)$ is a Legendrian isotopy invariant and the Euler characteristic of a cylinder vanishes.  The equation holds for the addition of a $0$-handle since the new maximal unknot raises $\chi(\rho)$  by $1$ as we get a new ruling disk with no new switches; this matches the Euler characteristic of a disk.  Finally, the equation holds for the addition of a $1$-handle, as $\chi(\rho)$ changes by $-1$ with the loss of a right cusp and no change to the switches; this matches the Euler characteristic of a pair of pants.
\end{proof}

\begin{prop}
\label{prop:canonical-e}
Let $L$ be a decomposable Lagrangian cobordism from $\leg_-$ to $\leg_+$, $\rho$ a ruling of $\leg_-$, and $\rho_L$ the canonical ruling of $\leg_+$ induced by $L$.  We compute that
	\[e_2(\rho_L) - e_2(\rho) \equiv \frac{1}{2} e(L) \mod 2.\]
\end{prop}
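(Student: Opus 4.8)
The plan is to mimic the proof of Proposition~\ref{prop:canonical-chi}: verify the congruence one elementary cobordism at a time, using Proposition~\ref{prop:tb-euler} to evaluate $e(L)$ on the pieces. Both sides of the identity are additive under concatenation. The relative normal Euler number is additive because, using the description $e(L) = \lk(K_0,K_0') - \lk(K_1,K_1')$ coming from a section of the normal $S^1$-bundle, the linking numbers at the shared level of a concatenation $L = L'\odot L''$ telescope (and the value is unchanged by modifying the section away from the Seifert-framed ends); and the canonical ruling satisfies $\rho_{L'\odot L''} = (\rho_{L'})_{L''}$ by its inductive construction. So it suffices to check the three elementary cobordisms, computing the change in each side.

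For a Legendrian isotopy, $L$ is a cylinder, hence orientable, so $e(L) = 0$; and the canonical ruling is precisely the one carried through front Reidemeister moves by Chekanov and Pushkar. Choosing orientations $o$ and $o^\rho$ on the two sides that differ as little as possible --- permissible by Lemma~\ref{lem:e-independent} --- one checks that across each move $s_-(\rho)$ is unchanged and $f_+(\rho,o,o^\rho) - f_-(\rho,o,o^\rho)$ changes by an even number, so $e_2(\rho)$ is preserved and both sides vanish. For a $0$-handle, $L$ is a disk disjoint from the rest of the cobordism, so $e(L) = 0$, and the canonical ruling simply acquires one new ruling disk, unlinked from and uncrossed with the others; this contributes no switches and no flipped crossings, so $e_2$ and both sides are again unchanged.

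The $1$-handle case is the crux, and it is here that the non-vanishing of the normal Euler number enters. Such an $L$ has $\chi(L) = -1$. For an oriented $1$-handle, $L$ is orientable, so $e(L) = 0$. For an unoriented $1$-handle, a local model shows $e(L) = \pm 2$ --- equivalently, by Proposition~\ref{prop:tb-euler}, that attaching an unoriented $1$-handle changes $tb$ by $-1$ --- so that $\tfrac12 e(L) \equiv 1 \pmod 2$. On the ruling side, the canonical ruling across a $1$-handle (Figure~\ref{fig:canonical-ruling}) merges the two ruling disks meeting the attaching cusps into a single ruling disk, deleting one right cusp while leaving the set of switches --- in particular $s_-(\rho)$ --- unchanged. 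Hence $e_2(\rho_L) - e_2(\rho) \equiv \big(f_+(\rho_L) - f_-(\rho_L)\big) - \big(f_+(\rho) - f_-(\rho)\big) \pmod 2$ for suitable orientations, and tracking this flipped-crossing count through the local model of the merge --- equivalently, computing the parity of the linking number between the boundaries of the two merged ruling disks --- shows it is even for an oriented handle and odd for an unoriented one. Matching the two values of $\tfrac12 e(L)$ completes the $1$-handle case, and with it the proof.

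The main obstacle is precisely this $1$-handle computation, in its two mod-$2$ bookkeeping ingredients: (i) pinning down $e(L) \bmod 4$ for an unoriented handle, equivalently the sign of the unit change in $tb$, where the argument genuinely departs from that of Proposition~\ref{prop:canonical-chi}; and (ii) controlling the change in the flipped-crossing count $f_+ - f_-$ when two ruling disks are merged, which requires careful attention to the orientations $o$ and $o^\rho$ in the local pictures of Figures~\ref{fig:1-handle} and \ref{fig:canonical-ruling}. The isotopy case, though routine, should not be skipped: it requires the move-by-move verification that $e_2$ is transported by the Chekanov--Pushkar correspondence on rulings.
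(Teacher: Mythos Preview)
Your overall strategy --- check each elementary cobordism --- is the paper's strategy, and your treatment of the $0$-handle is fine. But two of your claims are incorrect, and they are precisely the places where the real work lies.

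\textbf{The isotopy case.} You assert that ``across each move $s_-(\rho)$ is unchanged.'' This is false. In an R0 move (and in some R3 moves) a negative switch on one side can correspond to a flipped positive crossing on the other: $s_-$ and $f_+$ trade off. The paper's proof handles this by choosing orientations on the ruling disks so that each negative switch on one side is paired with an $f_+$ crossing on the other, keeping $s_- + f_+ - f_-$ invariant even though $s_-$ alone is not. Your sketch (``$s_-$ unchanged, $f_+ - f_-$ changes by an even number'') does not survive this case.

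\textbf{The $1$-handle case.} Your central claim --- that ``a local model shows $e(L) = \pm 2$'' for an unoriented $1$-handle, equivalently that $tb$ drops by $1$ --- is wrong. By Proposition~\ref{prop:tb-euler}, $e(L) = tb(\leg_-) - tb(\leg_+) + 1$, and $tb(\leg_+) - tb(\leg_-)$ equals $1$ plus the change in writhe. For an \emph{un}oriented $1$-handle, the orientation of $\leg_+$ disagrees with that of $\leg_-$ along an entire arc (the ``reversing strand''), and every crossing that arc participates in flips sign. The change in writhe, and hence $e(L)$, is therefore a \emph{global} quantity: it can be $0$, $\pm 2$, $\pm 4$, \ldots, depending on how the reversing strand threads through the rest of the diagram. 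There is no local model that pins it to $\pm 2$. Correspondingly, your claim that the change in $e_2$ is ``odd for an unoriented handle'' is also false in general: if the reversing strand meets no crossings, neither side changes. The paper's proof proceeds crossing-by-crossing along the reversing strand, showing that each crossing contributes $\pm 1$ to $\tfrac12 e(L)$ and simultaneously contributes $\pm 1$ to $e_2$ (via exactly one of $s_-$, $f_+$, or $f_-$), with matching signs. That matching is the substance of the argument, and it cannot be replaced by a local parity statement.
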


\begin{proof}
	We check the relation for each of the elementary cobordisms, this time working one Reidemeister move at a time for cobordisms induced by Legendrian isotopy. In these cases, we use the ruling correspondence laid down in \cite{chv-pushkar}; see also \cite{atiponrat:obstruction}. Note that for all but the $1$-handle case, the normal Euler number of the cobordism vanishes, so in those cases we must prove that $e_2$ does not change.
	
	\textbf{Reidemeister 0}.  If neither or both crossings are switched, or there are more than two ruling disks involved, then the rulings before and after an R0 move are combinatorially identical.  Thus, we need only consider the case where there is a single switch and only two ruling disks appear locally, as in Figure~\ref{fig:r0+}.  
	
	We claim that we can choose orientations on the two ruling disks that satisfy the following three conditions on both sides of the R0 move:

	\begin{enumerate}
		\item Exactly one flipped strand passes smoothly through any negative switch or negative crossing; 
		\item Exactly zero or two flipped strands pass through any positive switch or positive crossing; and
		\item The flipped strands are identical away from the local picture on both sides of the R0 move.
	\end{enumerate}

	In fact, one such choice is to orient the two ruling disks counterclockwise, as confirmed for configurations where the upper left switch is positive in Figures~\ref{fig:r0+}; a similar check can be made in the case that the upper left switch is negative. With such choices in hand, it is straightforward to see that a negative switch on one side of the R0 move is paired with a $f_+$ crossing on the other, thus balancing the contributions to $e_2$; there are no $f_-$ crossings to balance by our choices.
	
\begin{figure}
\labellist
\small\hair 2pt
 \pinlabel {$f_+$} [t] at 210 71
 \pinlabel {$s_-$} [t] at 260 71
 \pinlabel {$s_-$} [t] at 42 10
 \pinlabel {$f_+$} [t] at 90 10
\endlabellist
\centerline{\includegraphics{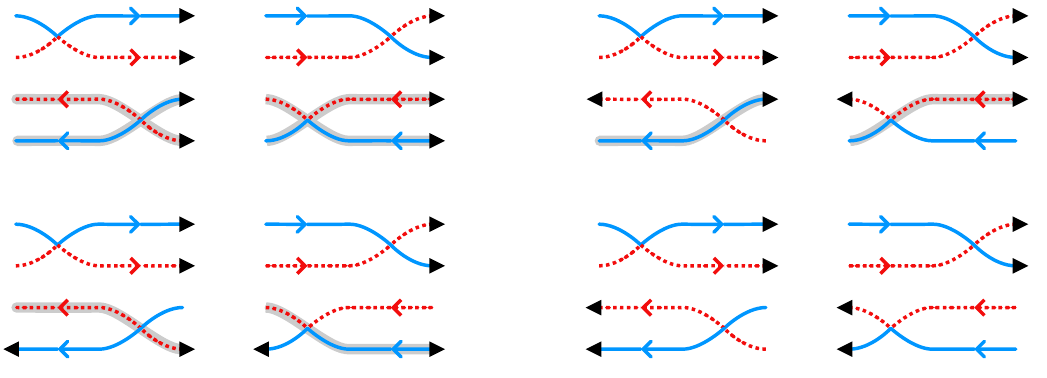}}
\caption{Choices of orientation for $\leg^\rho$ around an R0 move that satisfy the three conditions in the proof when the upper left switch is positive.  Similar choices may be made with the upper left switch is negative.}
\label{fig:r0+}
\end{figure}

	\textbf{Reidemeister I}.  There are no new negative switches or flipped crossings in this case, so $e_2$ does not change.  See the left side of Figure~\ref{fig:r1r20h}.

\begin{figure}
\labellist
\small\hair 2pt
 \pinlabel {$\emptyset$} at 193 26
\endlabellist
\centerline{\includegraphics{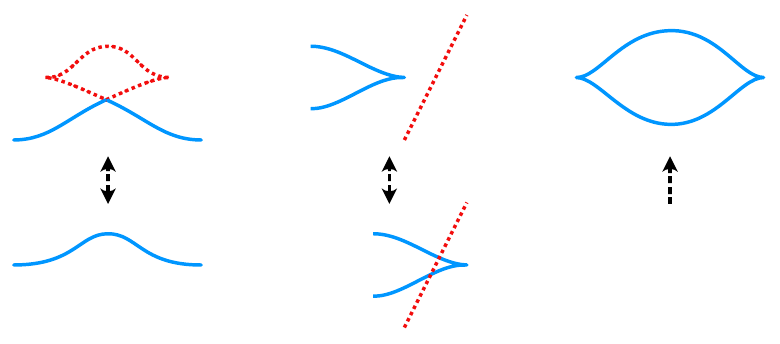}}
\caption{The normal Euler numbers of the corresponding rulings are equal across (left) an RI move (the switch is positive), (center) an RII move, or (right) a $0$-handle.}
\label{fig:r1r20h}
\end{figure}

	\textbf{Reidemeister II}.  There are no new negative switches in this case, and any flipped crossings arise in oppositely-signed pairs.  Thus, $e_2$ does not change. See the center of Figure~\ref{fig:r1r20h}.
		
	\textbf{Reidemeister III}.  If no crossing or all three crossings are switched, then the local contributions to $e_2$ before and after the RIII move are identical.
	
	If a single crossing is switched and is positive, then we may orient the components of $\leg_-^\rho$ so that no flipped regions appear in the local picture of the RIII move.  It follows that there are no local contributions to $e_2$ before and after the RIII move.  If the single switch is negative, then we may orient the components of $\leg_-^\rho$ so that exactly one flipped strand passes smoothly through the negative switch and no other strands are flipped. In this case, there is one flipped crossing that has the same sign before and after the RIII move; see Figure~\ref{fig:r3-1}.
	
\begin{figure}
\labellist
\small\hair 2pt
 \pinlabel {$s_-$} [r] at 31 28
 \pinlabel {$f_-$} [r] at 53 59
 \pinlabel {$f_-$} [r] at 98 31
 \pinlabel {$s_-$} [b] at 114 54
\endlabellist
\centerline{\includegraphics{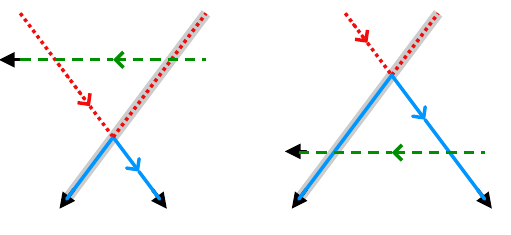}}
\caption{One case of an RIII move with one negative switch. There are choices of orientations on the components of $\leg^\rho$ so that there is one smooth flipped strand and the flipped crossings have the same signs before and after the move.}
\label{fig:r3-1}
\end{figure}

	If two crossings are switched, then as in the R0 cases, we may choose orientations on the components of $\leg_-^\rho$ so that the flipped strands are identical away from the local picture on both sides of the RIII move and so that local contributions to $e_2$ match; see Figure~\ref{fig:r3-2}.  
	
	Thus, for any RIII move, $e_2$ does not change.
	
\begin{figure}
\labellist
\small\hair 2pt
 \pinlabel {$s_-$} at 21 31
 \pinlabel {$s_-$} at 106 54
 \pinlabel {$s_-$} at 127 54
 \pinlabel {$f_-$} at 127 22
 \pinlabel {$s_-$} at 226 158
 \pinlabel {$s_-$} at 284 161
 \pinlabel {$s_-$} at 218 61
 \pinlabel {$s_-$} at 197 31
 \pinlabel {$f_-$} at 237 31
 \pinlabel {$s_-$} at 303 54
\endlabellist
\centerline{\includegraphics{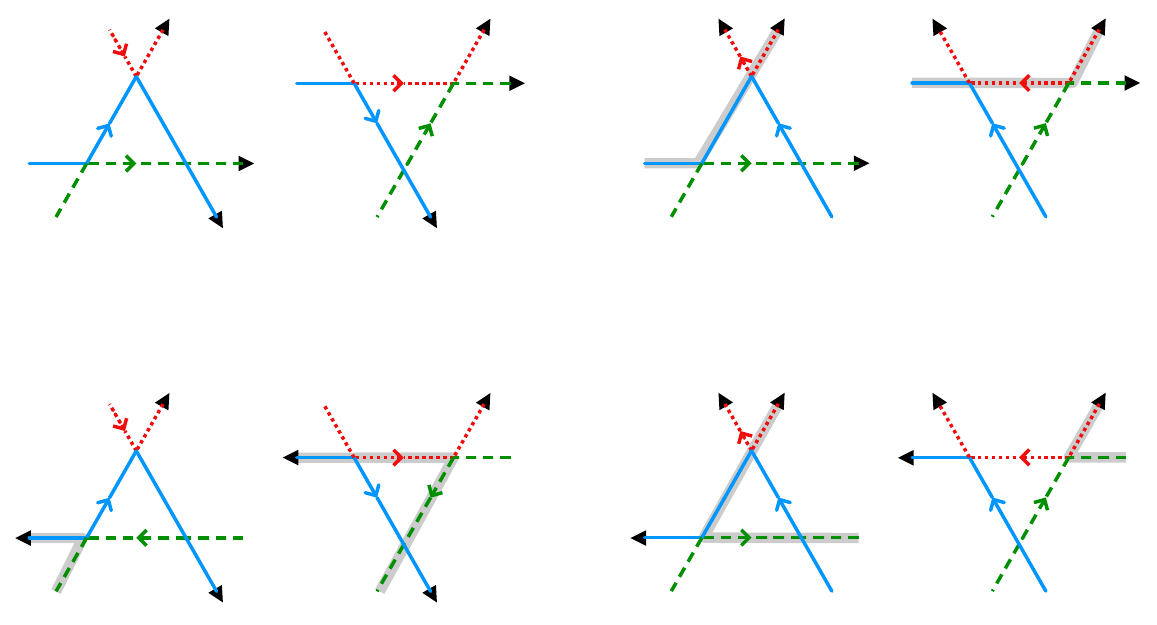}}
\caption{For each possible orientation (up to an overall reversal) of the strands of an RIII move with two switches, we display orientations on the components of $\leg^\rho$ so that the flipped strands are identical away from the local picture and so that local contributions to $e_2$ match.  In particular, note that $s_-$ and $f_-$ cancel.}
\label{fig:r3-2}
\end{figure}

	\textbf{0-handle}. There are no new negative switches or flipped crossings in this case, so $e_2$ does not change. See Figure~\ref{fig:r1r20h}(c).

	\textbf{1-handle}.	In this case, Proposition~\ref{prop:tb-euler} shows that $e(L) = tb(\leg_-) - tb(\leg_+) + 1$.  
	 
	If the $1$-handle is orientable, then the orientations on $\leg_-$ and $\leg_+$ match outside a neighborhood of the $1$-handle, and hence so do the signs of all of the crossings. Since the signs of the crossings are unchanged but $\leg_+$ has one fewer cusp than $\leg_-$, we have $e(L) = 0$. On the other hand, we may choose orientations on the resolved diagram $\leg_-^\rho$ so that the cusps involved in the $1$-handle are either both flipped or both not flipped; choose the corresponding orientation on $\leg_+^{\rho_L}$.  Since the signs of the switches are the same before and after the $1$-handle, as are the signs of the crossings in the flipped regions, we see that $e_2$ is also unchanged. 
	
	If the $1$-handle is not orientable, then a closer analysis is needed.  For any choice of orientation of $\leg_+$, there is a ``reversing strand'' along which the orientation of $\leg_+$ and that of $\leg_-$ disagree; see the example in Figure~\ref{fig:e-1h}. Along the reversing strand, flipped regions and crossing signs (at least at those crossings for which exactly one strand is the reversing strand) are opposite for $\leg_-$ and $\leg_+$. 
	
\begin{figure}
\centerline{\includegraphics{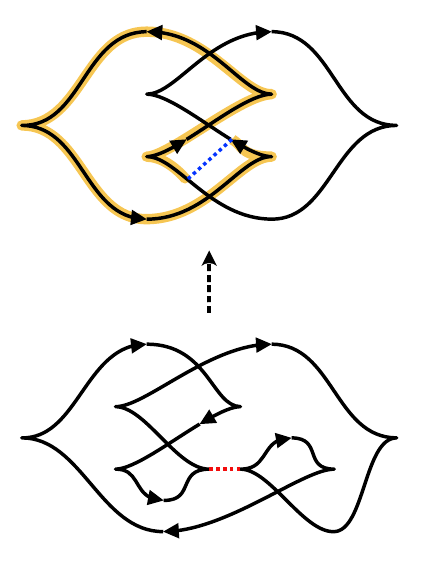}}
\caption{A non-orientable $1$-handle induces a ``reversing strand'' (highlighted at top) along which the orientations of $\leg_-$ and $\leg_+$ disagree, and hence along which flipped regions and crossing signs (at least at those crossings for which exactly one strand is the reversing strand) are reversed. \js{Is this useful?}}
\label{fig:e-1h}
\end{figure}
	
	In the expression for $e(L)$, we see that the elimination of a right cusp in $\leg_+$ cancels the $+1$.  Thus, we need only compare how changes in crossing signs  change $tb$ and $e_2$.  Suppose that the reversing strand changes a crossing from positive to negative.  On one hand, we may compute the local contribution to the change in $tb$ to be $\frac{1}{2}(tb(\leg_-) + tb(\leg_+)) = 1$. On the other hand, if the crossing is a switch, then $s_-(\rho)$ increases by $1$ and hence so does $e_2$.  If the crossing contributes to $f_-$, then the reversing strand unflips the crossing, and hence $f_-$ decreases by $1$, implying that $e_2$ increases by $1$. Note that such a crossing cannot contribute to $f_+$. Finally, if the crossing is not flipped, then the reversing strand flips it to contribute to $f_+$, implying that $e_2$ increases by $1$.  A similar analysis holds if the crossing changes from negative to positive.  In all cases, the change in $e_2$ matches that of $\frac{1}{2} e(L)$.
\end{proof}

\subsection{The Resolution Linking Number}
\label{ssec:rln}

Not every ruling need be the canonical ruling of a Lagrangian filling.  Atiponrat \cite{atiponrat:obstruction} defined one  obstruction, the parity of the number of ``clasps'' in a ruling that, loosely speaking, quantifies how far the ruling surface is from being a ribbon surface.  In this section, we define a more computable obstruction based on the linking between the boundaries of ruling disks.  We suspect that Atiponrat's parity and the unoriented resolution linking number contain the same information.

\begin{defn}
\label{defn:rlk}
	Let $\rho$ be an orientable ruling on a Legendrian link $\leg$. The \dfn{resolution linking number} $\rlk(\rho)$ is defined to be the sum of the pairwise linking numbers of the oriented link $\leg^\rho$.  For any ruling $\rho$ of $\leg$, the \dfn{unoriented resolution linking number} $\rlk_2(\rho)$ is defined to be the mod $2$ reduction of the sum of the pairwise linking numbers of the link $\leg^\rho$ with any chosen orientation of the components.
\end{defn}

Note that $\rlk_2$ is well-defined, as changing the orientation of a single component $\leg_i$ of $\leg^\rho$ would negate the linking numbers between $\leg_i$ and the other components, but those negations do not change the parity of the result.

It turns out that the resolution linking numbers are combinations of familiar invariants.  The first item in the lemma is due to \cite{liu-zhou}.

\begin{lem}
\label{lem:rlk-alt}
	\begin{enumerate}
	\item The oriented resolution linking number satisfies	
	\begin{equation} \label{eq:rlk}
	\rlk(\rho) = \frac{1}{2}(tb(\leg) + \chi(\rho)).
	\end{equation}
	\item For any choice of orientations  $\leg$ and of $\leg^\rho$, the unoriented resolution linking number satisfies
	\begin{equation} \label{eq:rlk2}
	\rlk_2(\rho) \equiv \frac{1}{2}\left(tb(\leg) + \chi(\rho) + 2 e_2(\rho) \right)\mod 2.
	\end{equation}
	\end{enumerate}
\end{lem}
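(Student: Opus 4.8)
The plan is to prove both identities by relating the sum of pairwise linking numbers of $\leg^\rho$ to its self-linking data, using the fact that a small pushoff $\leg^\rho{}'$ realizing the Seifert framing on each component lets us package the total linking number of the multi-component link $\leg^\rho$. The basic identity is that if a link $\ell = \ell_1 \sqcup \cdots \sqcup \ell_k$ has total linking number $\lk(\ell) = \sum_{i<j}\lk(\ell_i,\ell_j)$, then the writhe of a diagram of $\ell$ equals $2\lk(\ell)$ plus the sum of the writhes of the individual component diagrams. For the resolved front $\leg^\rho$, each component is a ruling disk, planar isotopic to the standard max-$tb$ unknot, which contributes writhe $-1$ (equivalently, $tb = -1$ with the standard front), and there are $c(\leg)$ of them. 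The crossings of $\leg^\rho$ are exactly the non-switched crossings of $\leg$, so the total writhe of $\leg^\rho$ is the writhe of $\leg$ minus the contribution of the switches. Combining these with $tb(\leg) = (\text{writhe}) - c(\leg)$ and $\chi(\rho) = c(\leg) - s(\rho)$ should directly yield \eqref{eq:rlk} after bookkeeping the signs of the switches; the orientable hypothesis guarantees all switches are positive, which is what makes the count clean. (I would cite \cite{liu-zhou} here and only sketch.)

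For part (2), the idea is to compare the oriented quantity of part (1) with the unoriented one by tracking how reversing the orientation of one component of $\leg^\rho$ affects the writhe decomposition. When $\rho$ is not orientable there is no global orientation of $\leg^\rho$ making all switches positive, so with an arbitrary choice of orientation $o^\rho$ the switches split into $s_+(\rho)$ positive and $s_-(\rho)$ negative, and similarly the ambient crossings of $\leg^\rho$ that are ``flipped'' relative to a fixed orientation $o$ of $\leg$ split into $f_+$ and $f_-$. The key computation: starting from any orientation of $\leg^\rho$ and flipping the orientation on the flipped strands one component at a time to return to the orientation induced by $o$, each flip changes the total linking number by an even amount (this is exactly the mechanism used in the proof of Lemma~\ref{lem:e-independent}), hence does not change $\rlk_2(\rho)$, while simultaneously it converts $s_-$- and $f_-$-type crossings into $s_+$- and $f_+$-type ones. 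Carrying the writhe identity through this process and reducing mod $2$, the correction term that appears is precisely $e_2(\rho) \equiv s_-(\rho) + f_+(\rho,o,o^\rho) - f_-(\rho,o,o^\rho) \bmod 2$, which is well-defined by Lemma~\ref{lem:e-independent}. This gives \eqref{eq:rlk2}.

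Concretely, the steps in order are: (i) establish the writhe-decomposition identity for the resolved link $\leg^\rho$; (ii) plug in $\mathrm{writhe}$ of a ruling disk $= -1$ and count ruling disks as $c(\leg)$; (iii) substitute the definitions of $tb$ and $\chi(\rho)$ to obtain part (1); (iv) redo the computation without assuming switches are positive, keeping the signs $s_\pm$ and the flipped-crossing signs $f_\pm$ visible; (v) observe that changing orientations of components of $\leg^\rho$ alters the sum of linking numbers only by even integers, so $\rlk_2$ is insensitive to these choices, and use this to match the sign-bookkeeping error term with $e_2(\rho)$; (vi) conclude \eqref{eq:rlk2}.

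The main obstacle I expect is step (iv)--(v): getting the signs exactly right when $\leg^\rho$ is genuinely non-orientable. One must be careful that a ``negative switch'' contributes to the writhe of $\leg^\rho$ with the opposite sign from a positive switch, and that the flipped crossings are the ones whose sign with respect to $o^\rho$ differs from their sign with respect to $o$; reconciling these two sign conventions (switch signs versus flipped-crossing signs) so that the combination collapses to the single parity $s_- + f_+ - f_-$ is the delicate part. It will help to do the computation first with a globally-chosen orientation of $\leg^\rho$ (where $s_-$ and $f_-$ both vanish by construction of an orientable resolution if one existed) and then perturb, leaning on the already-proven independence of $e_2$ from orientation choices to avoid having to verify every case by hand.
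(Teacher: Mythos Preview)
Your approach is the same writhe-count argument the paper uses, but two points need correction.

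First, a slip in step~(ii): each ruling disk has writhe $0$, not $-1$. Being planar isotopic to the standard max-$tb$ unknot means it has \emph{no crossings}; it is $tb$ that equals $-1$. With writhe $=-1$ plugged in, your step~(iii) would be off by $c(\leg)$ and would not recover \eqref{eq:rlk}.

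Second, your mechanism for part~(2) does not work as stated. You propose ``flipping the orientation on the flipped strands one component at a time to return to the orientation induced by $o$,'' but when $\rho$ is non-orientable there is \emph{no} orientation of $\leg^\rho$ agreeing with $o$: the flipped region consists of arcs of ruling disks (bounded at negative switches), not whole components, so no sequence of component flips lands you in an oriented situation. The paper sidesteps this by computing directly. Introduce $\bar{f}_\pm$ for the \emph{non-flipped} crossings of $\leg^\rho$, so every crossing of $\leg^\rho$ is exactly one of $f_\pm,\bar{f}_\pm$; then $2\,\rlk_2(\rho)\equiv f_+ + \bar{f}_+ - f_- - \bar{f}_-$, the signed crossing count of $\leg^\rho$ with respect to $o^\rho$. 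Now write
\begin{align*}
tb(\leg) &= (\bar{f}_+ + f_- + s_+) - (\bar{f}_- + f_+ + s_-) - c(\leg),\\
\chi(\rho) &= c(\leg) - s_+ - s_-,\\
2e_2(\rho) &= 2(s_- + f_+ - f_-),
\end{align*}
the first line using that a crossing of $\leg^\rho$ is positive for $o$ exactly when it lies in $\bar{f}_+\cup f_-$. Adding the three lines cancels all $s_\pm$ and $c(\leg)$ terms and leaves $f_+ + \bar{f}_+ - f_- - \bar{f}_-$. This is your step~(iv) carried out in full; step~(v) is unnecessary. Part~(1) is then the specialization $s_-=f_\pm=0$ over $\zz$.
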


We see immediately that the set of all oriented resolution linking number values of a Legendrian $\leg$ is a topological quantity.

\begin{cor}
\label{cor:rlk-homfly}
	The set of all oriented resolution linking numbers for a Legendrian $\leg$ is determined by the exponents of the variable $z$, shifted by $tb(\leg)$, in the coefficient of $a^{tb(\leg)-1}$ in the HOMFLY polynomial.
\end{cor}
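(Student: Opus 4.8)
The plan is to chain Theorem~\ref{thm:ruling-poly}(2) with Lemma~\ref{lem:rlk-alt}(1); the statement is essentially a bookkeeping corollary and requires no new geometric input. First I would read off the $z$-exponents from Theorem~\ref{thm:ruling-poly}(2): the oriented ruling polynomial $R_\leg^o(z) = \sum_{\rho} z^{1-\chi(\rho)}$, with the sum over oriented rulings $\rho$, is the coefficient of $a^{-tb(\leg)-1}$ in $P_\leg(a,z)$, and since every monomial in this sum has coefficient $+1$ there is no cancellation. Hence the set of exponents of $z$ occurring with nonzero coefficient in that HOMFLY coefficient is exactly
\[ E := \{\, 1 - \chi(\rho) \ :\ \rho \text{ an oriented ruling of } \leg \,\}. \]
(If $\leg$ carries no oriented ruling, the relevant HOMFLY coefficient vanishes and $E = \emptyset$, consistent with there being no oriented resolution linking numbers.)

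Next I would invoke Lemma~\ref{lem:rlk-alt}(1), which gives the affine relation $\rlk(\rho) = \frac{1}{2}\big(tb(\leg) + \chi(\rho)\big)$ between the oriented resolution linking number of $\rho$ and its Euler characteristic. Since $\rho \mapsto \chi(\rho)$ and $\rho \mapsto \rlk(\rho)$ differ by an invertible affine change of variable, the set $\{\rlk(\rho) : \rho \text{ oriented}\}$ is recovered from the set $\{\chi(\rho) : \rho \text{ oriented}\}$, hence from $E$, via the substitution $\chi(\rho) = 1 - e$ with $e \in E$:
\[ \{\text{oriented resolution linking numbers of } \leg\} = \Big\{\ \tfrac{1}{2}\big(tb(\leg) + 1 - e\big)\ :\ e \in E\ \Big\}. \]
This is exactly the claim that the set of oriented resolution linking numbers is determined by the exponents of $z$ in the relevant coefficient of the HOMFLY polynomial, shifted by $tb(\leg)$.

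The only delicate point — and the closest thing to an obstacle — is reconciling HOMFLY normalizations: Theorem~\ref{thm:ruling-poly} places $R_\leg^o(z)$ in the coefficient of $a^{-tb(\leg)-1}$, while the statement names the coefficient of $a^{tb(\leg)-1}$, so one must account for the change of variable $a \leftrightarrow a^{-1}$ (and its effect on which powers of $z$ appear) to align the two conventions. Once that is settled, the two displayed identities above yield the corollary verbatim.
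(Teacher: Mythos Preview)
Your proposal is correct and matches the paper's (implicit) approach: the paper states the corollary without proof, treating it as immediate from Lemma~\ref{lem:rlk-alt}(1) together with Theorem~\ref{thm:ruling-poly}(2), and you have simply spelled out that bookkeeping. Your observation about the discrepancy between $a^{tb(\leg)-1}$ and $a^{-tb(\leg)-1}$ is apt --- this appears to be a typo in the corollary's statement rather than a genuine normalization issue you need to resolve.
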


\begin{proof}[Proof of Lemma~\ref{lem:rlk-alt}]
Generalizing notation from Lemma~\ref{lem:e-independent}, denote by $\bar{f}_\pm$ the non-flipped crossings of $\leg^\rho$. The proof of (2) follows from adding up the following identities for some choices of orientation $o$ and $o^\rho$, and then slightly generalizing the proof of Lemma~\ref{lem:e-independent} to see that $\rlk_2(\rho) = \frac{1}{2}\left( f_+ + \bar{f}_+ - f_- - \bar{f}_- \right)$:

\begin{align*}
	tb(\leg) &= (\bar{f}_+ + f_- + s_+(\rho)) - (\bar{f}_- + f_+ + s_-(\rho))  - c(D) \\
	\chi(\rho) &= c(D) - s_+(\rho) - s_-(\rho) \\
	2e_2(\rho) &= 2(s_-(\rho) + f_+ - f_-)
\end{align*}

The proof of (1) follows the same computation, this time noting that all of the steps work over the integers, and that the quantities $s_-(\rho)$ and $f_\pm(\rho)$  vanish. 
\end{proof}

If $\leg$ has an orientable decomposable filling $L$, then, on one hand, we know $\tb(\leg) = -\chi(L)$, while on the other, we have $\chi(\rho_L) = \chi(L)$.  Thus, when $\rho$ is the canonical ruling for a filling $L$, the first formula in the lemma above shows that $\rlk(\rho)$ vanishes. The contrapositive of this string of ideas shows that the resolution linking number is an obstruction to the existence of an orientable decomposable filling.  A similar line of reasoning applies to non-orientable fillings and $\rlk_2$. This perspective leads to Theorem~\ref{thm:rlk}.

\begin{thm}
\label{thm:rlk}
	If $L$ is an orientable decomposable cobordism from $\leg_-$ to $\leg_+$ and that $\rho$ is an oriented ruling of $\leg_-$, then
	\begin{equation} 
	 \rlk(\rho_L) = \rlk (\rho).
	 \end{equation}
	The equation holds modulo $2$ for non-orientable decomposable cobordisms and rulings with $\rlk_2$ in place of $\rlk$.
\end{thm}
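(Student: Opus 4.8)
The plan is to prove both equalities formally, by substitution, from identities that are already in hand --- no further geometry is required. The inputs are: Lemma~\ref{lem:rlk-alt}, which expresses $\rlk$ and $\rlk_2$ in terms of $tb$, $\chi(\rho)$, and $e_2(\rho)$; Propositions~\ref{prop:canonical-chi} and~\ref{prop:canonical-e}, which give $\chi(\rho_L) - \chi(\rho) = \chi(L)$ and $e_2(\rho_L) - e_2(\rho) \equiv \tfrac12 e(L) \pmod 2$; and Proposition~\ref{prop:tb-euler}, which gives $tb(\leg_+) - tb(\leg_-) = -\chi(L) - e(L)$. Before computing I would record two easy facts from Section~\ref{ssec:euler}: the normal Euler number of any cobordism is even (so $\tfrac12 e(L) \in \zz$), and $e(L) = 0$ when $L$ is orientable. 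I would also invoke Proposition~\ref{prop:ori-canonical-ruling} so that, in the orientable case, $\rho_L$ is again an \emph{orientable} ruling and $\rlk(\rho_L)$ is defined in the first place.

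For the orientable case I would simply chain the identities, working over $\zz$: using Lemma~\ref{lem:rlk-alt}(1) for the first equality and then Propositions~\ref{prop:tb-euler} and~\ref{prop:canonical-chi},
\begin{align*}
\rlk(\rho_L) - \rlk(\rho)
&= \tfrac12\big(tb(\leg_+) + \chi(\rho_L)\big) - \tfrac12\big(tb(\leg_-) + \chi(\rho)\big) \\
&= \tfrac12\big[(tb(\leg_+) - tb(\leg_-)) + (\chi(\rho_L) - \chi(\rho))\big] \\
&= \tfrac12\big[(-\chi(L) - e(L)) + \chi(L)\big] = -\tfrac12 e(L) = 0,
\end{align*}
the last step because $L$ is orientable.

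For a general, possibly non-orientable, decomposable cobordism I would run the same computation modulo $2$, now using Lemma~\ref{lem:rlk-alt}(2) and feeding in the $e_2$-difference from Proposition~\ref{prop:canonical-e}:
\begin{align*}
\rlk_2(\rho_L) - \rlk_2(\rho)
&\equiv \tfrac12\big[(tb(\leg_+) - tb(\leg_-)) + (\chi(\rho_L) - \chi(\rho))\big] + \big(e_2(\rho_L) - e_2(\rho)\big) \\
&\equiv -\tfrac12 e(L) + \tfrac12 e(L) \equiv 0 \pmod 2,
\end{align*}
where the half-integers appearing are genuine integers because $e(L)$ --- and, as noted in the proof of Lemma~\ref{lem:rlk-alt}, $tb(\leg) + \chi(\rho)$ --- is even.

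There is no real obstacle here: the substance has been front-loaded into Lemma~\ref{lem:rlk-alt} and Propositions~\ref{prop:tb-euler}, \ref{prop:canonical-chi}, and~\ref{prop:canonical-e}, and what remains is bookkeeping. The only points deserving a moment of care are (i) checking that $\rlk(\rho_L)$ is even defined in the orientable statement, which is exactly the content of Proposition~\ref{prop:ori-canonical-ruling} (applied componentwise if $\leg_+$ is disconnected), and (ii) tracking divisibility by $2$ so that every halved quantity in the displays is an honest integer, which follows from the parity facts recalled above.
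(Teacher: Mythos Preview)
Your proposal is correct and follows exactly the paper's approach: the paper's proof is a single sentence stating that the result follows directly from Lemma~\ref{lem:rlk-alt} and Propositions~\ref{prop:tb-euler}, \ref{prop:canonical-chi}, and~\ref{prop:canonical-e}, and you have simply written out the substitution explicitly. Your additional remark invoking Proposition~\ref{prop:ori-canonical-ruling} to ensure $\rho_L$ is orientable (so that $\rlk(\rho_L)$ is defined) is a nice point of care that the paper leaves implicit.
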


\begin{proof}
	The proof follows directly from Lemma~\ref{lem:rlk-alt} and Propositions \ref{prop:tb-euler}, \ref{prop:canonical-chi}, and \ref{prop:canonical-e}.
\end{proof}

\begin{cor}
\label{cor:rlk}
	If all orientable rulings $\rho$ of $\leg$ have nonzero resolution linking number, then $\leg$ has no orientable filling; a similar fact for $\rlk_2(\leg, \rho)$ and fillings of any type also holds.
\end{cor}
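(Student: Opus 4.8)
The plan is to derive Corollary~\ref{cor:rlk} directly from Theorem~\ref{thm:rlk} by specializing to a filling, i.e., to the case $\leg_- = \emptyset$. The one preliminary point to pin down is the base case: the empty Legendrian link has a unique ruling $\rho_\emptyset$, namely the empty set of switches, and since $\emptyset^{\rho_\emptyset}$ has no pairs of components we have $\rlk(\rho_\emptyset) = 0$ and $\rlk_2(\rho_\emptyset) \equiv 0 \pmod 2$; this ruling is vacuously orientable. (As stated, the obstruction rules out \emph{decomposable} fillings, since the canonical ruling is only defined in that setting; this matches the exposition preceding Theorem~\ref{thm:rlk}.)

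For the first assertion I would argue by contraposition. Suppose $\leg$ has an orientable decomposable filling $L$. Because $\leg = \leg_+$ is a knot it is connected, so Proposition~\ref{prop:ori-canonical-ruling} applies with bottom ruling $\rho_\emptyset$ and shows that the canonical ruling $\rho_L$ of $\leg$ is orientable. Theorem~\ref{thm:rlk} then gives $\rlk(\rho_L) = \rlk(\rho_\emptyset) = 0$, so $\leg$ has an orientable ruling with vanishing resolution linking number, contradicting the hypothesis. The non-orientable statement is parallel: if $\leg$ has any decomposable filling $L$, the canonical ruling $\rho_L$ exists, and the mod-$2$ part of Theorem~\ref{thm:rlk} yields $\rlk_2(\rho_L) \equiv \rlk_2(\rho_\emptyset) \equiv 0 \pmod 2$, contradicting the hypothesis that every ruling of $\leg$ has odd $\rlk_2$.

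There is no real obstacle; the work is entirely in the inputs, which are already in hand. The only things to double-check are that Theorem~\ref{thm:rlk} and Proposition~\ref{prop:ori-canonical-ruling} (and, underneath them, Propositions~\ref{prop:tb-euler-filling}, \ref{prop:canonical-chi}, and \ref{prop:canonical-e}) behave correctly with the degenerate bottom $\leg_- = \emptyset$, which they do with the conventions $\chi(\rho_\emptyset) = 0$, $e_2(\rho_\emptyset) \equiv 0$, and $c(\emptyset) = s(\rho_\emptyset) = 0$. If one prefers a self-contained computation over invoking Theorem~\ref{thm:rlk}, one can instead feed $tb(\leg) = -\chi(L) - e(L)$, $\chi(\rho_L) = \chi(L)$, and $e_2(\rho_L) \equiv \frac{1}{2} e(L)$ into Lemma~\ref{lem:rlk-alt}: in the orientable case $e(L) = 0$ gives $\rlk(\rho_L) = \frac{1}{2}(tb(\leg) + \chi(\rho_L)) = 0$, and in general the $e(L)$ contributions cancel modulo the relevant power of $2$ so that $\rlk_2(\rho_L) \equiv 0$.
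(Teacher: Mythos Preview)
Your proposal is correct and matches the paper's approach: the corollary is stated without its own proof, but the paragraph immediately preceding Theorem~\ref{thm:rlk} lays out exactly the direct computation you give as your alternative (plugging $tb(\leg)=-\chi(L)$ and $\chi(\rho_L)=\chi(L)$ into Lemma~\ref{lem:rlk-alt}), and the placement as a corollary of Theorem~\ref{thm:rlk} indicates the specialization $\leg_-=\emptyset$ that you carry out. Your care in pinning down the empty-link base case and in flagging that the obstruction applies to \emph{decomposable} fillings is appropriate and slightly more explicit than the paper.
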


\begin{ex}
	The unique orientable ruling $\rho$ of the figure-eight knot has $\rlk(\leg, \rho) = 1$.  Thus, the figure-eight knot does not have an orientable decomposable filling.  Of course, the conclusion  follows even more easily from the fact that the maximal $tb$ of the figure-eight knot is $-3$, which cannot be $-\chi(L)$ for any surface $L$.  We will put the corollary to more subtle use in Section~\ref{sec:torus}.
\end{ex}

\section{Rigidity of the Euler Number}
\label{sec:rigid-e}


If $L$ is a filling of $\leg$, the relation $tb(\leg) = -\chi(L) - e(L)$ from Proposition~\ref{prop:tb-euler} leaves open the possibility of non-orientable fillings realizing infinitely many distinct normal Euler numbers. On one hand, infinitely many normal Euler numbers may, indeed, be realized by smooth fillings of any knot. By connected summing any smooth slice surface of a knot with a smooth nonorientable surace of arbitary Euler number in the four-ball, every Euler number within the constraints of \cite{Massey:Euler} may be realized.  In the decomposable setting, however, this is not possible, as stated in Proposition~\ref{prop:finite-euler}.

\begin{proof}[Proof of Proposition~\ref{prop:finite-euler}]
	A front diagram of a Legendrian knot supports only finitely many rulings, and hence only finitely many Euler characteristics of decomposable fillings by Proposition~\ref{prop:canonical-chi}.  It follows that there are only finitely many possible normal Euler numbers.
\end{proof}

\begin{ex}
We claim that any torus knot of the form $T(-p,2)$ has only one Euler number realized by a decomposable Lagrangian filling.  Indeed, it is straightforward to check that such a knot has a unique non-orientable ruling.  Thus, every decomposable filling of $T(-p,2)$ --- and there is at least one such --- must have the same Euler characteristic and hence the same Euler number.
\end{ex}

\begin{figure} 
\centerline{\includegraphics{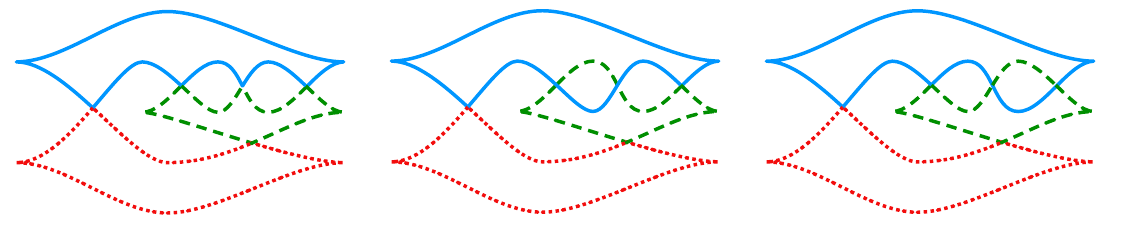}}
\caption{The three rulings of a maximal $tb$ front diagram for a mirror $5_2$ knot.}
\label{fig:5_2}
\end{figure}

\begin{ex}
We claim that the Legendrian front $\leg$ of the $m(5_2)$ knot shown in Figure~\ref{fig:5_2} has only one Euler number realized from a decomposable Lagrangian filling.  It is straightforward to compute that $\leg$ has exactly three rulings, as pictured in the figure.  We compute that the rightmost two rulings have $\rlk_2 = 1$, so by Theorem~\ref{thm:rlk}, we know that neither of these rulings can correspond to a decomposable filling.  Thus any decomposable Lagrangian filling of $\leg$ may only realize a single Euler number.

In fact, as we shall see in Section~\ref{sec:plus-ad}, since the $5_2$ knot is a non-positive alternating knot, it must have a non-orientable filling corresponding to the ruling which switches at every crossing.
\end{ex}

Although we will not go into detail here, one can also use other techniques to restrict the number of possible normal Euler numbers of a non-orientable Lagrangian filling.  For example, we may use an ungraded version of the Seidel Isomorphism as in \cite{4-plat} to restrict the possible topologies, and hence Euler numbers, of non-orientable fillings.  

In contrast to the examples above, there is no universal upper bound on the cardinality of the set of normal Euler numbers realized by exact non-orientable fillings. The underlying idea is to iterate the $tb$-twisted Whitehead double construction, denoted $Wh(\leg)$ and illustrated in Figure~\ref{fig:wh}. We observe that each iterated double has an orientable torus filling (which by necessity has Euler number 0), and then we inductively promote each fillable ruling for an iterated double to a fillable ruling on the next iterate with increased Euler number. In particular, the desired set of fillings for the $n^\text{th}$ double consists of the torus filling, together with the set of inductively promoted torus fillings of each previous double.

\begin{figure}
	\centerline{\includegraphics{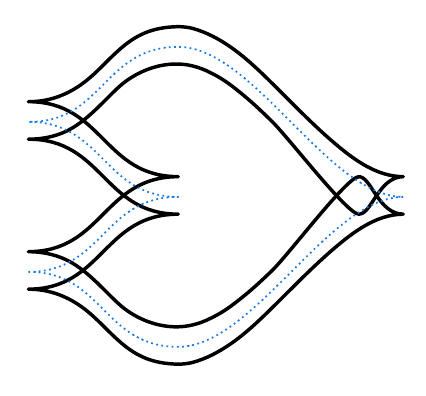}}
	\caption{The $tb$-twisted Whitehead double $Wh(\leg)$, in solid black, of a Legendrian knot $\leg$, in dotted blue.}
	\label{fig:wh}
	\end{figure}

\begin{thm} \label{thm:wh}
	The sequence of Legendrian links $\leg_0, \leg_1, \leg_2, \dots$ such that $\leg_n = Wh(\leg_{n-1})$ has the property that, for $n \geq 1$, $\leg_n$ has at least $n$ rulings induced by fillings with distinct non-negative Euler numbers.
\end{thm}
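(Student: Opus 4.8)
The plan is to argue by induction on $n$, constructing the required fillings of $\leg_n$ as the ``torus filling'' of $\leg_n$ itself together with suitably promoted copies of the torus fillings of $\leg_1,\dots,\leg_{n-1}$. Two structural facts about the $tb$-twisted double would serve as the backbone. First, I would record that each $\leg_m=Wh(\leg_{m-1})$, $m\geq 1$, carries an orientable exact Lagrangian torus filling $T_m$ (a once-punctured torus, so $\chi(T_m)=-1$) --- this is the observation made just before the statement, which I would justify from the explicit $0$-/$1$-handle picture underlying Figure~\ref{fig:wh}. Since $T_m$ is orientable it has $e(T_m)=0$, so Proposition~\ref{prop:tb-euler-filling} forces $tb(\leg_m)=-\chi(T_m)=1$ for every $m\geq1$. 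Second, I would read off from Figure~\ref{fig:wh} that $\leg_m$ is obtained from the Legendrian $2$-copy $\leg_{m-1}^{(2)}$ of $\leg_{m-1}$ by attaching a single $1$-handle along the clasp, and that --- because the two strands of a $2$-copy carry parallel orientations --- this $1$-handle is \emph{non-orientable}.

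The inductive step would rest on a promotion lemma: if $\leg_{m-1}$ (with $m\geq 2$) admits a decomposable exact Lagrangian filling $L$, then $\leg_m$ admits a decomposable exact Lagrangian filling $\widehat L$ with $e(\widehat L)=2e(L)+2$. To build $\widehat L$, I would first fill $\leg_{m-1}^{(2)}$ by the disjoint union $L\sqcup L'$, where $L'$ is a small Hamiltonian (Weinstein-neighborhood) pushoff of $L$: this pushoff is again a decomposable exact Lagrangian filling, it is disjoint from $L$, and its boundary is the parallel Legendrian pushoff of $\leg_{m-1}$, so $L\sqcup L'$ is a decomposable filling of $\leg_{m-1}^{(2)}$ with $\chi=2\chi(L)$. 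Concatenating with the $1$-handle above yields a decomposable filling $\widehat L$ of $\leg_m$ with $\chi(\widehat L)=2\chi(L)-1$ (and it is non-orientable, since the $1$-handle is). I would then apply Proposition~\ref{prop:tb-euler-filling} twice: to $\widehat L$, using $tb(\leg_m)=1$, it gives $e(\widehat L)=-1-(2\chi(L)-1)=-2\chi(L)$, and to $L$, using $tb(\leg_{m-1})=1$, it gives $\chi(L)=-1-e(L)$; combining these, $e(\widehat L)=2e(L)+2$. Note that $e(\widehat L)$ is even, is strictly positive whenever $e(L)\geq0$, and is strictly increasing in $e(L)$ --- this last point is what will keep the promoted Euler numbers distinct.

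To finish, I would run the induction. The base case $n=1$ is just $T_1$, which has $e(T_1)=0\geq0$. For $n\geq2$, I would take the fillings of $\leg_n$ to be $T_n$ together with, for each $1\leq j\leq n-1$, the filling of $\leg_n$ obtained by applying the promotion lemma $n-j$ times to $T_j$ (legitimate, since every intermediate stage is again a decomposable filling of some $\leg_i$ with $i\geq1$, so $tb(\leg_i)=1$ as needed). Iterating the recursion $e\mapsto 2e+2$ from $e(T_j)=0$ for $n-j$ steps produces Euler number $2^{\,n-j+1}-2$; as $j$ ranges from $n$ down to $1$ these are $0,2,6,14,\dots,2^n-2$, which are $n$ distinct non-negative values. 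Each of these fillings is decomposable, so it induces a canonical ruling of $\leg_n$ (see Section~\ref{ssec:canonical}); and by Proposition~\ref{prop:canonical-chi}, together with the fact that $tb(\leg_n)=1$ is fixed, distinct realized Euler numbers force distinct Euler characteristics and hence distinct rulings. This gives the required $n$ rulings of $\leg_n$ induced by fillings with distinct non-negative Euler numbers.

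The main obstacle, I expect, is the pair of structural facts in the first paragraph: extracting from Figure~\ref{fig:wh} that the orientable torus filling $T_m$ really exists for an arbitrary input $\leg_{m-1}$, and that pinching the clasp recovers precisely the Legendrian $2$-copy with its $tb$-framing (which is exactly what the ``$tb$-twist'' supplies) rather than a stabilized variant, so that the associated $1$-handle is non-orientable of the right type. A secondary point needing a careful --- though routine --- check is that a Hamiltonian pushoff of a decomposable exact Lagrangian filling is again a decomposable exact Lagrangian filling of the Legendrian pushoff, so that $L\sqcup L'$ genuinely fills $\leg_{m-1}^{(2)}$ within the decomposable category.
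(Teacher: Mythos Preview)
Your promotion lemma rests on a claim that is false: there is no single elementary $1$-handle cobordism from the Legendrian $2$-copy $\leg_{m-1}^{(2)}$ to $\leg_m = Wh(\leg_{m-1})$. An elementary $1$-handle that joins two link components into one is topologically a pair of pants, an orientable surface with three boundary circles, so it has $e=0$; Proposition~\ref{prop:tb-euler} then forces $\tb(\leg_m)-\tb(\leg_{m-1}^{(2)})=1$. But you have already established $\tb(\leg_m)=1$ from the torus filling, while $\tb(\leg_{m-1}^{(2)}) = 2\,\tb(\leg_{m-1}) + 2\,\lk(\leg_{m-1},\leg_{m-1}') = 4\,\tb(\leg_{m-1}) = 4$ for $m\geq 2$ (the linking of a Legendrian with its Reeb pushoff is its Thurston--Bennequin number). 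So $\tb$ must \emph{drop} by $3$, which no elementary $1$-handle can achieve. The same observation dismantles the ``non-orientable $1$-handle'' reasoning: a pair of pants is orientable as an abstract surface regardless of whether it respects a pre-chosen orientation on $\leg_{m-1}^{(2)}$. Since $L$, $L'$, and the pair of pants are each orientable and each is glued to the others along at most one circle, any surface assembled from them is orientable --- giving $e(\widehat L)=0$, in direct contradiction with your computation $e(\widehat L)=2$ when $L=T_{m-1}$. (There is also a non-trivial issue, which you flag, in arranging a Weinstein pushoff $L'$ that is simultaneously exact, disjoint from $L$, and cylindrical over the Reeb pushoff at infinity; but the $\tb$ obstruction above already blocks the argument.)

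The paper's construction avoids the $2$-copy entirely. To promote a filling $L_i$ of $\leg_{n-1}$ to a filling $\hat L_i$ of $\leg_n$, one introduces a maximal unknot via a $0$-handle, threads it by Legendrian isotopy through the interior of the Whitehead pattern to produce a specific ruling $\hat\rho_i$ on $\leg_n$, and then uses four pinches near the clasp to descend back to $\leg_{n-1}$ with its original ruling $\rho_i$; concatenating with $L_i$ completes $\hat L_i$. Counting switches gives $\chi(\hat\rho_i)=\chi(\rho_i)-4$, hence $e(\hat L_i)=e(L_i)+4$, and the Euler numbers realized on $\leg_n$ are $\{0,4,8,\ldots,4(n-1)\}$ rather than the $\{0,2,6,\ldots,2^n-2\}$ your recursion would produce.
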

	
\begin{proof}
	To begin the construction, let $\leg_0$ be any Legendrian knot, and let $\leg_1 = Wh(\leg_0)$; we may place the clasp near a right cusp of $\leg_0$ as in Figure~\ref{fig:wh}.  As shown in \cite[\S5.1]{bst:construct}, the Legendrian $\leg_1$ has a decomposable orientable filling by a Lagrangian torus $L$; in particular, $e(L) = 0$.
	
	Working inductively, suppose that $\leg_{n-1}$ is a $tb$-twisted Whitehead double that has decomposable fillings $\{L_{1}, \ldots, L_{n-1}\}$ with distinct Euler numbers $e_i = e(L_i)$.  Denote by $\rho_i$ the canonical ruling induced by $L_i$. For each $L_i$ and $\rho_i$, we will construct a decomposable filling $\hat{L}_i$ for $\leg_n = Wh(\leg_{n-1})$ with canonical ruling $\hat{\rho}_i$ and Euler number $\hat{e}_i = e_i + 4$.
	
	We begin by constructing $\hat{\rho}_i$ on $\leg_n$.  As shown in Figure~\ref{fig:construct-wh-thread}, start with the diagram of $\leg_{n-1}$ and a maximal unknot just below and to the right of the upper cusp near the clasp.  Thread the unknot through the center of the Whitehead double pattern as in the figure, initially bypassing the clasp of $\leg_{n-1}$ and creating new ruling disks and switches when passing through a pair of cusps of $\leg_{n-1}$, until the process returns to the clasp.  Correct the diagram near the clasp as in Figure~\ref{fig:construct-wh-clasp}, defining the ruling $\hat{\rho}_i$ depending on the structure of $\rho_i$ near the clasp as in the figure.
	
	\begin{figure}
	\centerline{\includegraphics{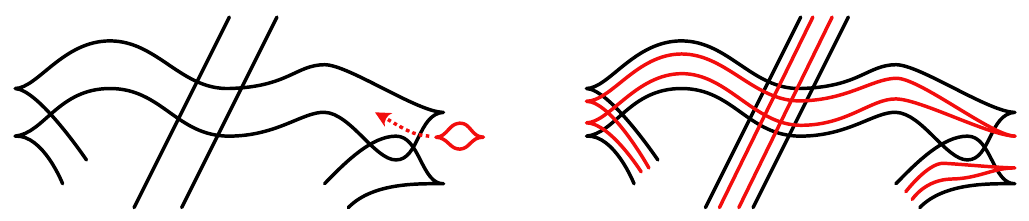}}
	\caption{Threading an unknot through a Whitehead double.}
	\label{fig:construct-wh-thread}
	\end{figure}
	
	\begin{figure}
	\centerline{\includegraphics{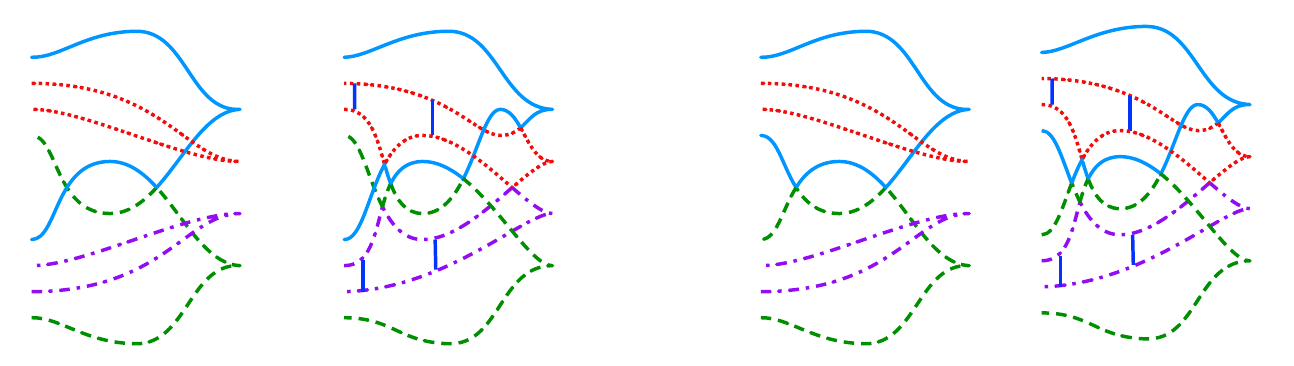}}
	\caption{Completing the threaded diagram to an iterated Whitehead double, complete with new rulings depending on the existing ruling near the old clasp.}
	\label{fig:construct-wh-clasp}
	\end{figure}
	
		It is straightforward to check that the process above yields a front diagram for $\leg_n = Wh(\leg_{n-1})$ with a ruling $\hat{\rho}_i$.  To construct $\hat{L}_i$, pinch across $\hat{\rho}_i$ just to the right of each of the four switches of the inner ruling disks (see Figure~\ref{fig:construct-wh-clasp}), then unthread the resulting unknot from $\leg_{n-1}$, and finally remove the unknot with a $0$-handle move.  We have returned to $\leg_{n-1}$ with $\rho_i$, which we know is the canonical ruling for $L_i$; we complete $\hat{L}_i$ by concatenating with $L_i$.  Lastly, we may compute from counting extra switches near the clasps in Figures~\ref{fig:construct-wh-thread} and \ref{fig:construct-wh-clasp} that $\chi(\hat{\rho}_i) = \chi(\rho_i) - 4$.  It follows from Proposition~\ref{prop:tb-euler} that $e(\hat{L}_i) = e(L_i) +4$.  
		
		In all, the collection of fillings $\{\hat{L}_1, \ldots, \hat{L}_{n-1}, L_n\}$, where $L_n$ is the standard orientable genus $1$ filling of $\leg_n$, yields $n$ different Euler numbers $\{e_1+4, \ldots, e_{n-1}+4, 0\}$.
\end{proof}

\begin{rem}
	When $\leg_0$ is the maximal $tb$ unknot, we see that $\leg_2$ (the Whitehead double of the trefoil) has both oriented and unoriented fillings.  This construction (not coincidentally, we believe) coincides with Sivek's construction of examples of Legendrians whose Chekanov-Eliashberg DGAs have  augmentations that induce multiple distinct linearized Legendrian contact homologies \cite{sivek:bordered-dga}.  Further, the first example of such a Legendrian, due to Melvin and Shrestha \cite{melvin-shrestha}, is the $m(8_{21})$ knot in Example~\ref{ex:8-21-filling}, which also has both orientable and  non-orientable fillings.
\end{rem}

\section{Plus-Adequate Knots}
\label{sec:plus-ad}


We shift our attention from obstructions for the existence of non-orientable fillings to constructions of such fillings.  The underlying goal is to uncover hints of the geography of non-orientably fillable smooth knots, which appears to be more complicated than that of orientably fillable knots.  In this section, we concentrate on plus-adequate knots, which simultaneously generalize positive and alternating knots; we will obtain proofs of Theorems~\ref{thm:positive} and \ref{thm:alternating} as corollaries of the main work in this section.

We begin by defining plus-adequate knots as in \cite{lt:plus-adequate}.  Given a smooth knot diagram $D$, we form $s_+(D)$ by resolving all of the crossings as in Figure~\ref{fig:ruling-defn}(a).  The diagram $D$ is \dfn{plus-adequate} if the two segments in the resolution near every crossing belong to different components of $s_+(D)$; a knot $K$ is plus-adequate if it has a plus-adequate diagram.

\begin{thm} \label{thm:plus-ad}
	Every plus-adequate knot $K$ has a Legendrian representative $\leg$ with a decomposable filling. The filling is orientable if and only if $K$ is positive. 
\end{thm}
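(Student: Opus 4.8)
The plan is to produce an explicit max-$tb$ front diagram for a plus-adequate knot $K$ from one of its plus-adequate diagrams, exhibit a particular ruling on that front, and then build a decomposable filling realizing that ruling. First I would take a plus-adequate diagram $D$ of $K$ and Legendrianize it in the standard way: isotope $D$ so that it is a ``grid-like'' or ``rectangular'' front, converting each crossing into a Legendrian crossing and introducing cusps along the right (and left) so that the diagram becomes a front projection of a Legendrian $\leg$. The plus-adequacy hypothesis is exactly what I would use to verify $tb$-maximality: the all-switch assignment $\rho_0$ (switch every crossing) resolves the front into $\leg^{\rho_0}$, whose components are precisely the loops of $s_+(D)$; plus-adequacy guarantees the two strands near each crossing lie on distinct loops, which is condition (2) for a normal ruling, and a careful but routine check of the rectangular front arranges the nesting/disjointness condition (3) and that each component is an unknot bounding a ruling disk. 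Hence $\rho_0$ is a normal ruling of $\leg$, and by Theorem~\ref{thm:ruling-poly} (via \cite{rutherford:kauffman}) this forces $tb(\leg)$ to be maximal.

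Next I would construct the decomposable filling realizing $\rho_0$. Starting from $\leg$, the standard move is to pinch (attach a $1$-handle along the co-core) at each switched crossing of $\rho_0$; since $\rho_0$ switches at every crossing, after pinching all crossings the front becomes an unlink of maximal unknots (one for each loop of $s_+(D)$, i.e.\ each ruling disk). That unlink is then capped off by $0$-handles read in reverse, so concatenating the reversed $0$-handles with the sequence of $1$-handle pinches (read top-down as unknots-to-$\leg$) gives a decomposable Lagrangian filling $L$ of $\leg$ whose canonical ruling is exactly $\rho_0$, by the canonical-ruling construction of Atiponrat \cite{atiponrat:obstruction} recalled in Section~\ref{ssec:canonical}. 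This establishes the first sentence of the theorem.

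For the orientability dichotomy I would invoke the machinery already built: by Proposition~\ref{prop:ori-canonical-ruling}, since $\leg_+ = \leg$ is connected (a knot) and the filling starts from the empty link (so the hypothesis ``$\rho$ an orientable ruling of $\leg_-$'' is vacuous/trivially satisfied by the empty ruling), $L$ is orientable if and only if its canonical ruling $\rho_0$ is orientable. Now $\rho_0$ switches at every crossing, so $\rho_0$ is orientable precisely when every crossing of $D$ is a positive crossing, i.e.\ when $D$ is a positive diagram. Thus $L$ is orientable $\iff$ $D$ is positive. To upgrade ``$D$ positive'' to ``$K$ positive'' in the clean form stated, I would note that if $K$ is positive then one can choose a positive plus-adequate diagram (positive diagrams are plus-adequate, since in the $+$-resolution of a positive crossing the two strands necessarily lie on different Seifert circles — or cite the relevant fact from \cite{lt:plus-adequate}), giving an orientable filling; conversely, if the constructed $L$ is orientable then $K$ bounds an orientable decomposable Lagrangian filling, and by the orientable theory (Chantraine \cite{chantraine}, and the quasipositivity results of \cite{positivity}, combined with the alternating/adequate-specific analysis of \cite{cns:obstr-concordance}) together with plus-adequacy this forces $K$ to be positive.

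The main obstacle I expect is the middle step made rigorous: verifying that the Legendrianization of an arbitrary plus-adequate diagram really does yield a front on which the all-switch assignment satisfies \emph{all} the normal-ruling axioms — in particular the local nesting condition (3) at each switch and the global requirement that each component of $\leg^{\rho_0}$ be planar-isotopic to a standard maximal unknot — since plus-adequacy only directly hands us axiom (2). This is a diagrammatic/combinatorial argument about how the $+$-state loops sit in a rectangular front, and getting it to work uniformly (rather than case-by-case) is the delicate part; the existence of the filling and the orientability dichotomy then follow formally from the results already in the paper.
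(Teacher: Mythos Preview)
Your strategy matches the paper's: produce K\'alm\'an's all-switch ruling on a plus-adequate front, build a decomposable filling from it, and use Proposition~\ref{prop:ori-canonical-ruling} for the orientability dichotomy. The paper simply cites \cite{kalman:+adequate} for the ruling and \cite[\S5]{positivity} (run without regard to orientation) for the filling procedure, so the ``main obstacle'' you flag is already handled in the literature; note also that the elementary $1$-handle goes between cusps, not across crossings, so ``pinch at each switched crossing'' needs interleaved isotopies to be made precise. The one substantive difference is in the direction ``$K$ positive $\Rightarrow$ $L$ orientable'': rather than choosing a positive diagram in advance, the paper argues via \cite{tanaka:max-tb-pos} that a positive $K$ has a Legendrian representative with only positive crossings, hence only orientable rulings, hence $R_\leg = R_\leg^o$; since this equality is read off the HOMFLY and Kauffman polynomials (Remark~\ref{rem:ruling-homfly-k}), it holds for \emph{every} representative, so no Legendrian representative of a positive knot admits any non-orientable decomposable filling. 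This is strictly stronger---it is exactly Theorem~\ref{thm:positive}---and avoids your case split on whether $K$ is positive. Finally, your closing appeal to \cite{chantraine}, \cite{positivity}, and \cite{cns:obstr-concordance} for ``$L$ orientable $\Rightarrow K$ positive'' is unnecessary and the last reference only treats alternating knots; the direct argument you already gave (all-switch $\rho_0$ orientable $\Rightarrow$ every crossing of the front positive $\Rightarrow$ $K$ positive) is what the paper uses and is sufficient.
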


\begin{proof}
	K\'alm\'an showed that every plus-adequate knot $K$ has a Legendrian representative $\leg$ with a normal ruling $\rho$ in which all crossings of $\leg$ are switches of $\rho$ \cite{kalman:+adequate}. Carrying out the procedure in \cite[\S5]{positivity} without regard for orientation, we obtain a decomposable filling $L$ of $\leg$ with canonical ruling $\rho$.  

	If $L$ is orientable, then by Proposition~\ref{prop:ori-canonical-ruling}, we must have that $\rho$ is orientable.  Since $\rho$ has a switch at every crossing, we see that the front diagram for $\leg$ has only positive crossings, and hence $K$ is positive.  Conversely, if $K$ is positive, then as noted in \cite{tanaka:max-tb-pos}, there exists a Legendrian representative $\leg$ of $K$ that has only positive crossings.  Thus, every ruling of $\leg$ is orientable, so Proposition~\ref{prop:ori-canonical-ruling} implies that there are no non-orientable decomposable fillings of $\leg$.  Using Corollary~\ref{cor:ruling-poly-obstr}, and Remark~\ref{rem:ruling-homfly-k} in particular, we see that the result holds for \emph{any} Legendrian representative of $K$.
\end{proof}

Theorems~\ref{thm:positive} and \ref{thm:alternating} follow as immediate corollaries. Alternating knots also provide evidence for the conjecture about minimal crosscap numbers mentioned in the introduction. 

\begin{prop} \label{prop:alt-min}
	Let $L$ be a non-orientable decomposable filling with Euler number $e$ of an alternating Legendrian knot $\leg$ whose canonical ruling is switched at every crossing in a reduced alternating diagram.  The filling $L$ realizes the minimal crosscap number $\gamma_4^{e}(\leg)$ of all smooth fillings of $\leg$ with normal Euler number $e$.
\end{prop}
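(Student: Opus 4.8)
The plan is to compare the first Betti number of $L$ with a sharp lower bound on the $e$-crosscap number that is available specifically for alternating knots. Since $L$ is connected with a single boundary component, $b_1(L) = 1 - \chi(L)$. The existence of the canonical ruling $\rho$ forces $tb(\leg)$ to be maximal, so writing $\overline{tb}(K)$ for the maximal Thurston--Bennequin number of the underlying smooth knot $K$ and combining Proposition~\ref{prop:tb-euler-filling} with $e(L) = e$ gives
\[ b_1(L) \;=\; 1 - \chi(L) \;=\; 1 + \overline{tb}(K) + e. \]
Moreover, since $L$ is a filling, Proposition~\ref{prop:canonical-chi} (with no ruling at the bottom) gives $\chi(\rho) = \chi(L)$, so $b_1(L) = 1 - \chi(\rho)$ is also the first Betti number of the canonical ruling surface $\Sigma_\rho$. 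Because $L$ is itself a smooth non-orientable filling of $K$ with normal Euler number $e$, this already shows $\gamma_4^{e}(\leg) \le b_1(L)$, so the whole content of the proposition is the reverse inequality $\gamma_4^{e}(\leg) \ge 1 + \overline{tb}(K) + e$.

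For the lower bound I would invoke the signature obstruction to the $e$-crosscap number coming from the double branched cover of $(B^4, F)$: for any non-orientable $F \subset B^4$ with $\partial F = K$ and $e(F) = e$, one has $b_1(F) \ge \bigl| \sigma(K) + \frac{e}{2} \bigr|$ in a suitable sign normalization (the Gordon--Litherland/Batson bound \cite{gl:signature, batson:non-ori-slice, oss:unoriented}, possibly refined by a $d$-invariant term which, for an alternating knot, is controlled by the signature because its knot Floer homology is thin). The restriction to alternating knots enters through the hypothesis that $\rho$ is switched at every crossing of a \emph{reduced alternating} diagram $D$: this both forces $K$ to be non-positive (an oriented ruling would make every crossing of $D$ positive) and identifies $\Sigma_\rho$, up to isotopy, with a checkerboard spanning surface of $D$. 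For such a surface the Gordon--Litherland form is definite, so by the Gordon--Litherland theorem $b_1(\Sigma_\rho)$ equals $\bigl| \sigma(K) + \frac{e}{2} \bigr|$ once one checks that the Euler number of $\Sigma_\rho$ in $S^3$ equals $e$; combined with the displayed identity this yields $b_1(L) = \bigl| \sigma(K) + \frac{e}{2} \bigr|$, which meets the lower bound. Hence $\gamma_4^{e}(\leg) = b_1(L)$.

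The main obstacle is this lower bound. Because there is no adjunction inequality for non-orientable surfaces, $\gamma_4^{e}$ cannot be controlled by slice--Bennequin reasoning and must instead be pinned down through the signature of the branched double cover of $(B^4,F)$ (or an equivalent Heegaard Floer input), and one must verify both that this bound is \emph{sharp} for alternating knots and that its value equals $1 + \overline{tb}(K) + e$. The genuinely delicate part is the sign and framing bookkeeping: determining which mirror the hypotheses place us in, checking that the Euler number of the checkerboard surface $\Sigma_\rho$ in $S^3$ agrees with the normal Euler number $e$ of the four-dimensional filling, and matching the $\frac{e}{2}$ appearing in the signature bound against the shift of $\chi$ supplied by Proposition~\ref{prop:tb-euler-filling}. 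The identification of $\Sigma_\rho$ with a checkerboard surface of $D$ (together with definiteness of its Gordon--Litherland form) is the geometric input that makes the upper and lower bounds coincide exactly rather than merely up to a constant.
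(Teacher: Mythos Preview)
Your approach is essentially the same as the paper's: both start from the trivial upper bound $\gamma_4^e(\leg) \le b_1(L)$ and then match it with the Gordon--Litherland signature bound $|\sigma(K) \pm \tfrac{e}{2}| \le \gamma_4^e(\leg)$ by showing that $b_1(L)$ equals the left-hand side. The paper verifies this equality by pure combinatorics: it records $\chi(L) = c(\leg) - s(\rho)$, $e(L) = 2s_-(\rho)$, and the alternating-knot signature formula $\sigma(\leg) = c(\leg) - s_+(\rho) - 1$ (attributed to Ng), and then simply adds these up to get $\sigma - \tfrac{e}{2} = \chi(L) - 1$, hence $|\sigma - \tfrac{e}{2}| = b_1(L)$.

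Your route to the same equality is more geometric: you identify $\Sigma_\rho$ with a checkerboard surface of the reduced alternating diagram, invoke definiteness of its Gordon--Litherland form, and read off $b_1(\Sigma_\rho) = |\sigma \pm \tfrac{e}{2}|$. This is perfectly valid and arguably explains \emph{why} the inequality is sharp (the ruling surface is literally the extremal spanning surface), whereas the paper's three displayed identities are really an unpacked form of that same Gordon--Litherland computation for the checkerboard surface. The paper's version has the advantage of being fully explicit and making the sign bookkeeping trivial; your version requires, as you note, matching the normal Euler number of the three-dimensional spanning surface $\Sigma_\rho$ with the four-dimensional $e(L)$, which the paper's computation $e(L) = 2s_-(\rho)$ does silently. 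Your digressions on $d$-invariants and non-positivity are unnecessary; the plain Gordon--Litherland bound suffices.
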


\begin{proof}
We certainly have $\gamma_4^{e}(\leg) \leq b_1(L)$ by definition.  To obtain the reverse inequality, we use the Gordon and Litherland signature bound \cite{gl:signature}:
\begin{equation} \label{eq:gl}
\left| \sigma(\leg) + \frac{e}{2} \right| \leq \gamma_4^e(\leg).
\end{equation}
In particular, we compute the left side in terms of quantities related to $L$.  Since the canonical ruling $\rho_L$ is switched at every crossing, the number of (positive or negative) crossings coincides with the number of (positive or negative) switches.  First, by Proposition~\ref{prop:canonical-chi}, we have 
\begin{equation} \label{eq:chi-L}
\chi(L) = c(\leg) - s(\rho).
\end{equation}  
Next, by Proposition~\ref{prop:tb-euler}, we see that
\begin{equation} \label{eq:e-L}
e(L) = -\chi(L) - tb(\leg) = 2s_-(\rho).
\end{equation}
Finally, building on \cite[p.\ 1646]{lenny:khovanov}, we derive that 
\begin{equation} \label{eq:signature}
\sigma(\leg) = c(\leg) - s_+(\rho) -1.
\end{equation}  
Note that our signature is normalized opposite to that of \cite{lenny:khovanov}, with the right-handed trefoil having signature $-2$.

Inserting Equations~\eqref{eq:e-L} and \eqref{eq:signature} into the inequality \eqref{eq:gl}, and then using Equation~\eqref{eq:chi-L}, we obtain
\[b_1(L) = \left| \chi(L) - 1\right| \leq \gamma_4^{e}(\leg),\]
thus proving the proposition.
\end{proof}

\begin{rem} 
The signature bound is not always sufficient to show that non-orientable Lagrangian fillings of plus-adequate knots realize the minimal crosscap number. The Kinoshita-Terasaka knot $11n_{42}$, shown in Figure~\ref{fig:kt}, is such an example, as its canonical filling has $b_1(L) = 7$ while the signature bound only yields $\gamma_4^e(\leg) \geq 5$.
\end{rem}

\begin{figure}[t]
\includegraphics{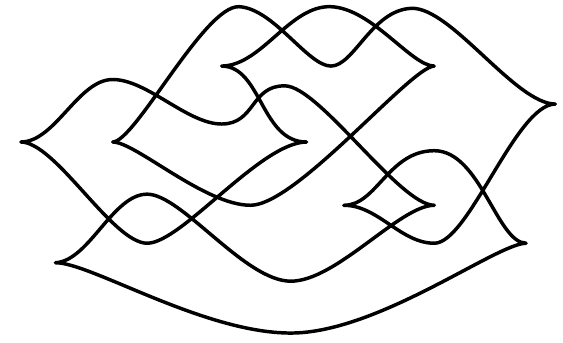}
\caption{A Legendrian Kinoshita-Terasaka $11n_{42}$ knot.}
\label{fig:kt}
\end{figure}

\section{Torus Knots}
\label{sec:torus}


This section is devoted to the proof of Theorem~\ref{thm:torus}, which characterizes the fillability of many torus knots.  Recall that we adopt the convention that for $p,q$ relatively prime with $|p|>q>0$, $T(p,q)$ denotes the $(p,q)$ torus knot.  

We may dispose of the first three claims of the theorem quickly.  If $p>q>0$, then $T(p,q)$ is positive and hence orientably fillable \cite{positivity}.  Conversely, suppose $p<0$ and $T(p,q)$ were fillable.  The maximal Thurston-Bennequin number of $T(p,q)$ is $pq < -1$ \cite{etnyre-honda:knots}, but a Legendrian representative $\leg$ with orientable filling $L$ would have $-1 \leq -\chi(L) = tb(\leg)$, a contradiction.

Second, note that for $p<0$, $T(p,2)$ is alternating and non-positive, and hence has a non-orientable filling by Theorem~\ref{thm:plus-ad}.

Third, Epstein and Fuchs \cite{epstein-fuchs} show that the Kauffman bound is not sharp for $p<0$ and $q$ odd.  Theorem~\ref{thm:ruling-poly} then shows that no Legendrian representative of $T(p,q)$ has a ruling, and hence, by Proposition~\ref{prop:ruling-obstr}, no Legendrian representative of $T(p,q)$ has a filling.

We arrive at the case where $p<0$ and $q$ is even and greater than $2$. Note that Capovilla-Searle and Traynor \cite[Corollary 1.3]{cst:non-ori-cobordism} claimed that these knots were fillable, but the claimed filling in Figure 12 only works when $p$ and $q$ are both even. Atiponrat proved that  $T(p,4)$ is not decomposably fillable in \cite{atiponrat:torus, atiponrat:obstruction}.  We will simplify Atiponrat's proof by using the unoriented resolution linking number in place of the clasp parity, and we will extend the result to encompass \emph{any} $q$ divisible by $4$.  

The proof proceeds in two steps:  first, in Lemma~\ref{lem:torus-rulings}, we show that any maximal Legendrian representative of $T(p,q)$ with $p<0$ and $q$ even has a unique ruling, which is necessarily non-orientable since $T(p,q)$ is a negative knot.  Second, in Lemma~\ref{lem:torus-rlk}, we compute the unoriented resolution linking number of this ruling, and show that it is non-zero when $4|q$.  These two lemmas, combined with Corollary~\ref{cor:rlk}, then complete the proof of the last part of Theorem~\ref{thm:torus}.

\begin{lem}
\label{lem:torus-rulings}
Any maximal Legendrian representative of $T(p,q)$ with $p<0$ and $q$ even has a unique ruling $\rho$.
\end{lem}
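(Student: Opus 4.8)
The plan is to work with an explicit, standard max-$tb$ front diagram of $T(p,q)$ for $p<0$ and $q$ even, and to argue that the ruling is forced crossing-by-crossing. I would use the usual Legendrian representative of negative torus knots built from the $(-p)$-fold twist region on $q$ strands (equivalently, the front for the closure of a negative braid on $q$ strands with $|p|$ full twists, drawn with the minimal number of cusps so that $c(\leg)=q$ and $tb(\leg)=pq$). First I would record the combinatorial data: such a front has exactly $q$ right cusps and $|p|(q-1)$ crossings (or whatever the precise count is for the chosen normalization), and each of the $q$ ruling disks must pair up the cusps in the standard nested way since the knot is connected. The key structural point is that, because $tb$ is maximal and $\chi(\rho) = c(\leg) - s(\rho)$ must match the genus/Euler-characteristic constraints forced by Theorem~\ref{thm:ruling-poly}, every ruling has the \emph{same} number of switches; what I want is the stronger statement that the \emph{locations} of the switches are forced.

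The main step is a local analysis of the twist region. In a column of the braid (a single half-twist band repeated down the diagram), the normality condition (condition (3): incident disks nested or disjoint at each switch) together with the requirement that each component of $\leg^\rho$ be an unknot severely restricts which crossings in that column can be switches. I would show by induction on the strands, reading the twist region from top to bottom, that at each crossing the switch/non-switch choice is determined by the choice made at the crossing immediately above and to the side — essentially there is a unique consistent ``domino'' tiling of the twist region by ruling disks. The cleanest way to phrase this is probably via the companion normal-ruling-on-a-positive/negative-braid combinatorics (à la Chekanov–Pushkar / Rutherford): for the full negative twist on $q$ strands, the set of rulings is in bijection with something rigid (e.g. only the ``all crossings are switches within each full twist'' pattern survives), and when $q$ is even this bijection has a single element. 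I would lean on the fact, already invoked in the torus-knot section, that $T(p,2)$ has a unique non-orientable ruling, and bootstrap: adding two more strands to the twist region and checking that no new ruling is introduced, because any switch pattern on the extra strands that kept all components unknotted and normal would either disconnect a component or violate nesting.

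The hard part will be making the crossing-by-crossing forcing argument airtight rather than hand-wavy — in particular, controlling the \emph{global} shape of the ruling disks (whether a given disk ``turns around'' at a cusp versus continuing through a switch) across the whole twist region, since normality is a local condition but ``each component is an unknot'' is global. I expect to handle this by tracking, at each vertical slice of the front, the matching on the strand endpoints induced by the partial ruling (a noncrossing perfect matching, by normality and the nesting condition), showing that this matching evolves deterministically as we pass each crossing given that the final matching at the bottom must close up the braid into a single unknotted-disk decomposition; the evenness of $q$ is what makes the forced matching unique rather than coming in a one-parameter family. Once uniqueness is established, Lemma~\ref{lem:torus-rlk} and Corollary~\ref{cor:rlk} finish Theorem~\ref{thm:torus}(4), and the non-orientability of this unique ruling is automatic since $T(p,q)$ is negative (an oriented ruling would force all switches positive, contradicting $tb(\leg) = pq < 0$ via Lemma~\ref{lem:rlk-alt} or directly).
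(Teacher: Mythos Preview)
Your approach is genuinely different from the paper's, and the paper's is dramatically simpler. The paper does not touch a single crossing: by Theorem~\ref{thm:ruling-poly}, the ruling polynomial of any max-$tb$ representative equals the coefficient of $a^{-tb(\leg)-1}=a^{-pq-1}$ in the Kauffman polynomial $F_{T(p,q)}(a,z)$, and Yokota's closed formula for $F_{T(p,q)}$ (as interpreted in Epstein--Fuchs) shows that this coefficient is exactly $z$. Since the ruling polynomial has nonnegative integer coefficients, $R_\leg(z)=z$ forces a single ruling. This handles \emph{all} maximal representatives at once and takes one paragraph.

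Your combinatorial route is plausible in principle but has real gaps as written. First, your front is not the one that actually arises: the Etnyre--Honda max-$tb$ fronts for negative torus knots (the ones in Figure~\ref{fig:torus-front}) have $|p|$ right cusps and $|p|(q-1)$ crossings, not $c(\leg)=q$; the ``rainbow closure on $q$ strands'' picture you have in mind does not realize max $tb$ here, so the domino/matching heuristics you describe are being run on the wrong diagram. Second, even on the correct front, the ``noncrossing matching evolves deterministically'' claim is exactly the content of the lemma and you have not supplied the argument --- you explicitly flag this as the hard part. Third, a crossing-by-crossing proof only treats one front; to get the lemma as stated (``any maximal Legendrian representative'') you still need either the Legendrian invariance of the ruling polynomial or the Etnyre--Honda classification plus a check for each $(n_1,n_2)$ front. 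If you want a diagrammatic proof, the cleanest salvage is to work on the front in Figure~\ref{fig:torus-front} and observe that the $|p|$ ruling disks are forced to be the $|p|$ ``rhombi plus top/bottom bigons'' because normality at the lattice crossings leaves no freedom --- but once you have Rutherford's theorem available, the Kauffman-polynomial computation is strictly easier.
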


\begin{proof}
	By Theorem~\ref{thm:ruling-poly}, it suffices to show that the coefficient $a^{-pq-1}$ of the Kauffman polynomial of $T(p,q)$ is $z$.  By a theorem of Yokota \cite{yokota:kauffman} as interpreted by \cite{epstein-fuchs},\footnote{In \cite{epstein-fuchs}, the convention is that $p \geq q >0$ and negative torus knots are of the form $T(-p,q)$; we have changed the signs of $p$ in the formulae to conform to our convention.} the Kauffman polynomial of $T(p,q)$ when $q$ is even may be expressed as
	\[F(a,z) = \frac{a^{-pq}z}{a+z-a^{-1}}(G_{p,q}(a,z) + 1),\]
	where $G_{p,q}(a,z)$ has maximal degree $p+q$ in $a$.  Thus, the highest degree in $a^{-pq}z (G_{p,q}(a,z)+1)$ is $-pq$ and the leading term is $za^{-pq}$.  It follows from the formula above that the leading term of $F(a,z)$ is $za^{-pq-1}$, as required.
\end{proof}

We proceed to construct the ruling $\rho$ promised by Lemma~\ref{lem:torus-rulings}.  Write $|p| = (n_1+n_2+1)q + r$ for some non-negative $n_i$.  Any maximal Legendrian representative of $T(p,q)$ with $p<0$ is isotopic to a Legendrian with a front diagram as in Figure~\ref{fig:torus-front} for some choice of $n_1$ and $n_2$ \cite{etnyre-honda:knots}. 

\begin{figure}
\labellist
\small\hair 2pt
 \pinlabel {$q$} [t] at 115 144
 \pinlabel {$n_2q$} [l] at 69 95
 \pinlabel {$(n_1+1)q+r$} [l] at 231 95
\endlabellist
	\centerline{\includegraphics{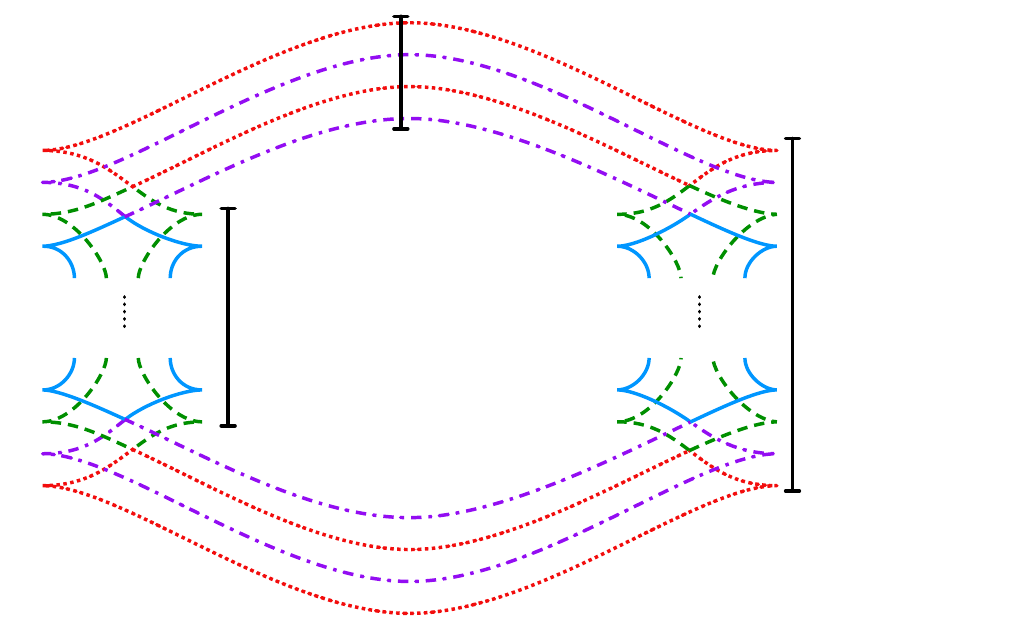}}
	\caption{A front diagram for a Legendrian negative torus knot $T(p,q)$ with $|p| = (n_1+n_2+1)q + r$.  The unique ruling of this front is indicated.}
	\label{fig:torus-front}
\end{figure}

To produce the ruling $\rho$, first consider the strands at the top of the diagram in Figure~\ref{fig:torus-front}, numbering them from top to bottom.  For $k \leq \frac{q}{2}$, pair strands $k$ and $\frac{q}{2}+k$, adding a switch where they meet in the left- and right-hand lattices of crossings.  Add switches at all crossings in the same columns as the switches arising from the top strands. This produces ruling disks shaped like rhombi in the crossing lattices and ruling disks across the bottom strands in the same pattern as at the top.  To check that this is a normal ruling, notice that each pair of ruling disks incident to every switch is disjoint.

\begin{lem}
\label{lem:torus-rlk}
	Let $\leg$ be maximal Legendrian representative of $T(p,q)$ with $p<0$ and $q$ even, and let $\rho$ be the unique ruling of $\leg$.  We may compute that
	\[\rlk_2(\rho) \equiv \begin{cases} 0 & q \equiv 2  \mod 4, \\ 1 & q \equiv 0  \mod 4. \end{cases} \]
\end{lem}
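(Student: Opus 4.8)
The plan is to compute $\rlk_2(\rho)$ directly from the explicit ruling $\rho$ constructed after Lemma~\ref{lem:torus-rulings}, using the combinatorial formula for the unoriented resolution linking number in terms of $tb$, $\chi$, and $e_2$ provided by Lemma~\ref{lem:rlk-alt}(2). Since $tb(\leg) = pq$ is known \cite{etnyre-honda:knots} and $\chi(\rho) = c(\leg) - s(\rho)$ can be read off the front diagram in Figure~\ref{fig:torus-front}, the equation
\[
\rlk_2(\rho) \equiv \frac{1}{2}\bigl(tb(\leg) + \chi(\rho) + 2e_2(\rho)\bigr) \mod 2
\]
reduces the problem to: (i) counting right cusps $c(\leg)$ and switches $s(\rho)$ of the displayed ruling, and (ii) computing $e_2(\rho) = s_-(\rho) + f_+ - f_-$ modulo $2$ for a convenient choice of orientations.

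First I would set up the bookkeeping from the figure. The front has $q$ strands passing through two crossing lattices — a left lattice with $n_2 q$ columns of crossings and a right lattice with $(n_1+1)q + r$ columns — together with $q/2$ right cusps on the right and $q/2$ left cusps on the left (so $c(\leg) = q/2$). The ruling pairs strand $k$ with strand $q/2 + k$, so in each lattice column exactly those crossings lying in the ``switch columns'' become switches; a short count gives $s(\rho)$ as a linear expression in $q$, $r$, $n_1$, $n_2$. Combined with $|p| = (n_1+n_2+1)q + r$ and $tb = pq$, the quantity $\tfrac12(tb + \chi(\rho))$ should collapse — all the $n_i$-dependence must cancel since $\rlk_2(\rho)$ is a knot invariant by Corollary~\ref{cor:rlk-homfly} — leaving something depending only on $q$ mod $4$ (and possibly $r$, which is determined mod something by $q$). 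Then I would compute $e_2(\rho)$: orient $\leg$ as a knot, transfer to an orientation $o^\rho$ of $\leg^\rho$, and identify the flipped strands; the rhombus-shaped ruling disks make the flipped region easy to track, and one checks that $s_-(\rho) + f_+(\rho,o,o^\rho) - f_-(\rho,o,o^\rho)$ is even (indeed I expect the natural orientation makes $\rho$ have all-negative or all-even-parity data so that $e_2(\rho) \equiv 0$, consistent with the remark that negative torus knots are not hard to orient). Plugging everything back in yields the claimed dichotomy.

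The main obstacle I anticipate is the careful enumeration of switches and the precise identification of $\chi(\rho)$ — in particular keeping track of the remainder $r$ and verifying that the final answer genuinely depends only on $q \bmod 4$ and not on $p$, $n_1$, $n_2$, or $r$ individually. One useful sanity check along the way: $\rho$ is the \emph{unique} ruling, so $\chi(\rho) = 1 - (\text{the $z$-exponent appearing in } R_\leg)$, which by Lemma~\ref{lem:torus-rulings} means $R_\leg(z) = z$, i.e. $\chi(\rho) = 0$; hence $\rlk_2(\rho) \equiv \tfrac12 tb(\leg) + e_2(\rho) = \tfrac{pq}{2} + e_2(\rho) \bmod 2$. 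Since $p$ is odd (as $\gcd(p,q)=1$ and $q$ is even), $\tfrac{pq}{2} \equiv \tfrac{q}{2} \bmod 2$, which is $1$ exactly when $q \equiv 0 \bmod 4$ and $0$ when $q \equiv 2 \bmod 4$. So the entire statement follows once one shows $e_2(\rho) \equiv 0$, and that reduces to exhibiting orientations of $\leg^\rho$ for which $s_-(\rho) + f_+ - f_-$ is even — the one genuinely geometric step, which I would carry out by drawing the orientations on the rhombus disks explicitly as in the figures for Proposition~\ref{prop:canonical-e}.
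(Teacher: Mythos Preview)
Your route via Lemma~\ref{lem:rlk-alt}(2) is legitimate and genuinely different from the paper's proof, which simply computes $\rlk_2$ from the definition: orient every ruling disk counterclockwise, observe that each pair of components of $\leg^\rho$ is either unlinked or a negative Hopf link, count the $|p|(q-2)$ crossings, and conclude $\rlk_2(\rho) \equiv |p|(q/2-1) \equiv q/2-1 \bmod 2$. That approach avoids $e_2$ entirely.

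However, your execution contains two errors that happen to cancel. First, the parity claim is backwards: if $q\equiv 0 \bmod 4$ then $q/2$ is even, and if $q\equiv 2 \bmod 4$ then $q/2$ is odd --- the opposite of what you wrote. Second, and more seriously, $e_2(\rho)\equiv 1$, not $0$. You can see this already for $T(-3,2)$: the front has three crossings, all negative and all switched, and $\leg^\rho$ is an unlink with no crossings, so $e_2(\rho)=s_-(\rho)+f_+-f_-=3\equiv 1$. In general every switch of the ruling in Figure~\ref{fig:torus-front} is a negative crossing of the negative torus knot, and there are $|p|$ of them with $|p|$ odd, so $s_-(\rho)$ is odd; meanwhile the flipped-crossing contribution is even. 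Thus the ``one genuinely geometric step'' you single out --- verifying $e_2(\rho)\equiv 0$ --- would fail when you actually drew the pictures. With both signs corrected ($\rlk_2\equiv q/2 + e_2 \equiv q/2+1 \bmod 2$) your argument does go through. Incidentally, your earlier bookkeeping is also off: the front in Figure~\ref{fig:torus-front} has $|p|$ right cusps, not $q/2$; your shortcut via $R_\leg(z)=z$ to get $\chi(\rho)=0$ rescues you there, and is a nice observation.
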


\begin{proof}
	The front diagram of $\leg$ in Figure~\ref{fig:torus-front} has $|p|(q-1)$ crossings and $|p|$ right cusps.  It follows that there are $|p|$ ruling disks and $|p|$ switches for $\rho$, as each ruling disk has two switches and each switch is incident to two ruling disks.  Thus, the link $\leg^\rho$ has $|p|(q-2)$ crossings.  Orienting all of the components of $\leg^\rho$ counterclockwise, we see that each two-component sublink of $\leg^\rho$ is either the unlink or a negative Hopf link; in particular, all of the crossings of $\leg^\rho$ are negative.  Thus, we see that 
	\[\rlk_2 (\rho) \equiv p \left(\frac{q}{2}-1\right) \mod 2,\]
	and the lemma follows.
\end{proof}

We end this section with a conjecture about the remaining cases.

\begin{conj}
	The only non-orientably fillable torus knots are of the form $T(p,2)$ with $p<0$.
\end{conj}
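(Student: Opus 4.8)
Toward proving this conjecture, the plan is first to record what the established cases already give and then to isolate the genuinely new work. The ``if'' direction is Theorem~\ref{thm:torus}(2): $T(p,2)$ with $p<0$ is alternating and non-positive, so it has a non-orientable filling by Theorem~\ref{thm:plus-ad}. For the ``only if'' direction, Theorem~\ref{thm:torus}(3) shows $T(p,q)$ has no filling of any kind when $p<0$ and $q$ is odd; combining Theorem~\ref{thm:positive} with the fact that $T(p,q)$ is positive for $p>q>0$ rules out non-orientable \emph{decomposable} fillings of positive torus knots; and Theorem~\ref{thm:torus}(4) rules out non-orientable \emph{decomposable} fillings of $T(p,q)$ when $p<0$ and $4\mid q$. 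Thus two tasks remain: (a) promote the positive and the $4\mid q$ cases from the decomposable category to arbitrary exact non-orientable fillings, and (b) dispose of the family $T(p,q)$ with $p<0$ and $q\equiv 2\pmod{4}$, $q\geq 6$, which is currently untouched --- for these, Lemma~\ref{lem:torus-rulings} gives a unique maximal-$tb$ ruling $\rho$, but Lemma~\ref{lem:torus-rlk} shows $\rlk_2(\rho)=0$, so Theorem~\ref{thm:rlk} provides no obstruction.

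For the decomposable part of (b), the goal would be to show that the unique (rhombus-pattern) ruling $\rho$ of Lemma~\ref{lem:torus-rulings} is not the canonical ruling of any decomposable filling, using a finer invariant than $\rlk_2$. Since $\rlk_2(\rho)=0$ and Atiponrat's clasp parity is expected to carry the same information (Section~\ref{ssec:rln}), one needs genuinely new data: for instance, an invariant that remembers not just the pairwise linking numbers of $\leg^\rho$ but the way the ruling disks are glued into the ruling surface $\Sigma_\rho$ --- e.g.\ band framings at switches, or a refinement valued in $H_1(\Sigma_\rho)$ --- and that is tracked through the elementary-cobordism moves as in Propositions~\ref{prop:canonical-chi} and \ref{prop:canonical-e}. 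Alternatively, the Seidel-isomorphism technique indicated in Section~\ref{sec:rigid-e} (cf.\ \cite{4-plat}) could be pushed to constrain the possible homeomorphism types, and hence Euler numbers, of fillings of these knots, after which one checks directly that none survives.

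The part I expect to be the main obstacle is removing the decomposability hypothesis in (a) and handling the non-decomposable case of (b): the only general obstruction available for arbitrary fillings, Proposition~\ref{prop:ruling-obstr}, gives nothing here, precisely because the Kauffman bound \emph{is} sharp for these knots (unlike the odd-$q$ case), so a maximal-$tb$ ruling exists. The natural replacement would be an adjunction-type inequality for non-orientable Lagrangian fillings. Combining Proposition~\ref{prop:tb-euler-filling} (which forces $b_1(L)=1+tb(\leg)+e(L)$ for a connected filling), the Gordon--Litherland bound $|\sigma(\leg)+e(L)/2|\leq b_1(L)$, and Massey's constraints on the normal Euler number does narrow the possibilities, but it does not close: because $b_1(L)$ itself grows with $|e(L)|$, the signature estimate only yields a \emph{consistent} lower bound on $e(L)$ rather than a contradiction --- this is exactly the ``lack of an adjunction inequality for non-orientable surfaces'' noted in the introduction. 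Short of proving such an inequality, the remaining route would be a Floer-theoretic refinement of Corollary~\ref{cor:ruling-poly-obstr} that detects non-orientability of a (not necessarily decomposable) filling directly from the Chekanov--Eliashberg DGA; supplying that refinement, or the adjunction inequality, is the crux of the conjecture.
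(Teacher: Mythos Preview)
This statement is a \emph{conjecture}, and the paper does not prove it. Immediately after stating it, the paper only remarks that ``one would need a more powerful invariant than $\rlk_2$,'' and suggests that an integral lift of $\rlk_2$ via a formula like that in Lemma~\ref{lem:rlk-alt} --- incorporating a combinatorial translation of the normal Euler number for a ruling --- may suffice. There is no proof to compare against.

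Your proposal is not a proof either, but rather an outline of what remains to be done, and as such it is accurate and considerably more detailed than the paper's own remarks. You correctly partition the problem: the ``if'' direction and the cases $p<0$ with $q$ odd are settled; the cases $p>q>0$ and $p<0$ with $4\mid q$ are settled only in the decomposable category; and the case $p<0$, $q\equiv 2\pmod 4$, $q\geq 6$ is entirely open, precisely because $\rlk_2(\rho)=0$ there. Your suggestions for the decomposable part of this last family --- a refinement of $\rlk_2$ remembering band framings or valued in $H_1(\Sigma_\rho)$ --- are in the same spirit as the paper's proposed integral lift, though phrased differently. Your additional observations about the non-decomposable cases (that Proposition~\ref{prop:ruling-obstr} gives nothing because the Kauffman bound is sharp, and that the Gordon--Litherland/Massey inequalities do not close the gap) go beyond what the paper says and correctly identify why the general conjecture is hard.

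In short: neither you nor the paper proves the conjecture; your discussion of the obstacles is sound and more thorough than the paper's single-sentence suggestion.
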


To prove such a conjecture, one would need a more powerful invariant than $\rlk_2$.  An integral lift of $\rlk_2$ using a formula similar to that in Lemma~\ref{lem:rlk-alt}, which would include a combinatorial translation of the normal Euler number for a ruling, may suffice.

\section{Pretzel Knots}
\label{sec:pretzel}


This section is devoted to the proof of Theorem~\ref{thm:pretzel}, which determines which $3$-stranded pretzel knots are non-orientably fillable, except for one small family. Let $P(\pm p_1,\pm p_2,\pm p_3)$ for $p_1,p_2,p_3>0$ be the 3-stranded pretzel knot, where each twist box contains $p_i$ half twists with  a positive (resp. negative) sign indicating left-handed (resp. right-handed) twists. We will make frequent use of the well-known fact that if $P(-p_1,p_2,p_3)$ is a knot, then at most one of $p_1,p_2,p_3$ is even.

Front diagrams for all maximal $tb$ Legendrian pretzel knots were determined by Ng in \cite[\S6]{lenny:thesis}, except for the family $P(-p_1,-p_2,p_3)$ with $p_1\geq p_2=p_3+1$. The maximal $tb$ of this exceptional case is still unknown.

Any 3-stranded pretzel knot may be presented as one of the five forms listed below, with maximal $tb$ front diagrams for each of the five forms shown in Figures~\ref{pretzel_fronts123} and \ref{pretzel_fronts45}.
\begin{enumerate}
\item $P(p_1,p_2,p_3)$
\item $P(-p_1,-p_2,-p_3)$
\item $P(-p_1,p_2,p_3)$
\item $P(-p_1,-p_2,p_3)$ with $p_1\geq p_2$, $p_2\leq p_3+1$
\item $P(-p_1,-p_2,p_3)$ with $p_1\geq p_2\geq 2$ and $p_2\geq p_3+2$
\end{enumerate}

These five forms correspond exactly to the cases in Theorem \ref{thm:pretzel}. Note that the exceptional family is in case (4) with $p_2=p_3+1$, and this family is exactly the one mentioned in the statement of Theorem \ref{thm:pretzel}.

\begin{rem}In cases (1)--(2), in case (3) with $det(K)<0$, and in cases (4)--(5) with $det(K)>0$, the $4$-dimensional crosscap number is minimized by the Lagrangian filling among surfaces with the same Euler number. One can check this by appealing to Proposition~\ref{prop:alt-min} for the first two cases, and calculating the signature of each knot in the last three cases using the Goeritz matrix.
\end{rem}

\begin{figure}
\labellist
\small\hair 2pt
 \pinlabel {$p_1$}  at 80 204
 \pinlabel {$p_2$}  at 80 170
 \pinlabel {$p_3$} at 80 136
 \pinlabel {$p_1$} at 193 171
 \pinlabel {$p_2$} at 228 171
 \pinlabel {$p_3$} at 261 171
 \pinlabel {$p_1$} at 132 96
 \pinlabel {$p_2$} at 97 58
 \pinlabel {$p_3$} at 61 16
\endlabellist 
  \centerline{\includegraphics{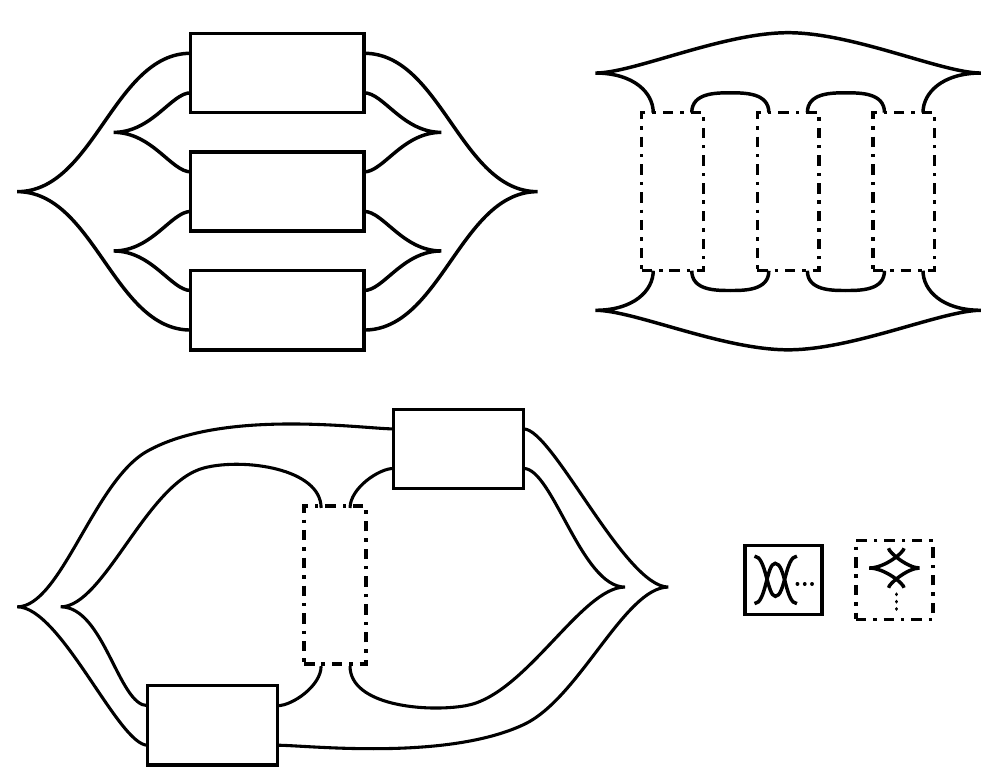}}
  \caption{Maximal $tb$ front diagrams for $P(p_1,p_2,p_3)$ (top left), $P(-p_1,-p_2,-p_3)$ (top right), and $P(-p_1,p_2,p_3)$ (bottom).}
  \label{pretzel_fronts123}
\end{figure}

\begin{figure}
\labellist
\small\hair 2pt
 \pinlabel {$a$} at 43 141
 \pinlabel {$b$} at 105 141
 \pinlabel {$c$} at 150 129
 \pinlabel {$p_3$}  at 207 143
 \pinlabel {$p_1$} at 315 132
 \pinlabel {$p_3$}  at 72 73
 \pinlabel {$d$}  at 105 43
 \pinlabel {$e$} at 135 43
\endlabellist
  \centerline{\includegraphics{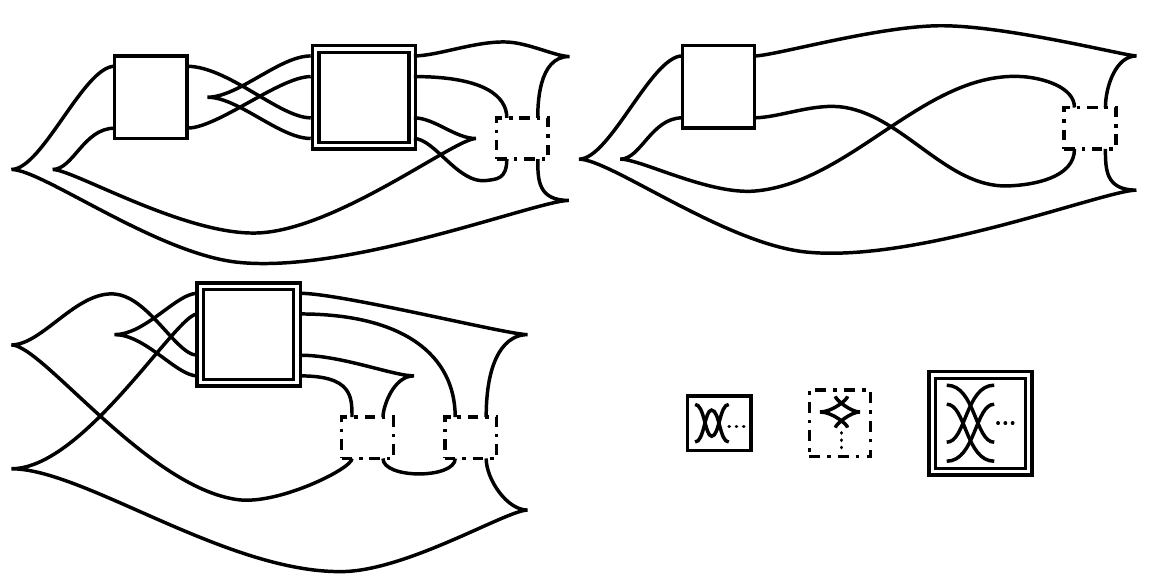}}
  \caption{Front diagrams for $P(-p_1,-p_2,p_3)$ with $p_1\geq p_2\geq 2$ and either $p_2  < p_3+1$ (top left with $a=p_3-p_2+1$, $b=p_2-2$, $c=p_1-p_2+1$), $p_2 = p_3+1$  (top right); or $p_2\geq p_3+2$ (bottom left with $d=p_2-p_3-1$, $e=p_1-p_3-1$). These have maximal $tb$, with the possible exception of the $p_2 = p_3+1$ case.}
  \label{pretzel_fronts45}
\end{figure}

\begin{proof}[Proof of Theorem~\ref{thm:pretzel}]

\textbf{Case (1): $P(p_1,p_2,p_3)$.} Since these pretzel knots are all non-positive and alternating, Theorem \ref{thm:alternating} implies that each knot in the family admits a decomposable non-orientable filling.
 
\textbf{Case (2): $P(-p_1,-p_2,-p_3)$.}  These pretzel knots are all alternating and are positive if and only if all of the $p_i$ are odd. It follows that if one of the $p_i$ is even, then $P(-p_1,-p_2,-p_3)$ admits a decomposable non-orientable filling. Further, if all the $p_i$ are odd then $P(-p_1,-p_2,-p_3)$ admits no decomposable non-orientable filling by Theorem \ref{thm:positive}.

\textbf{Case (3): $P(-p_1,p_2,p_3)$.} Further, one can easily check that if $p_1$ is even then $P(-p_1,p_2,p_3)$ is positive, in which case it has no decomposable non-orientable filling. Conversely, if $p_1$ is odd then $P(-p_1,p_2,p_3)$ is non-positive and plus-adequate by work of \cite{FKP:guts}, and therefore admits a decomposable non-orientable filling by Theorem \ref{thm:plus-ad}.

For the last two cases (4) and (5), we will explicitly construct fillings for the knots in the families of Theorem~\ref{thm:pretzel}, and show that all other knots in each case do not have non-orientable normal rulings. Theorem~\ref{prop:ori-canonical-ruling} will then imply the desired result. Before considering each case individually, note first that in either case (4) or (5), the cusps inside the vertical (dashed) twist boxes at the right of the diagram must form ruling disks in pairs. In both cases, there are exactly three left and right cusps not contained in twist boxes, so we will consider how these cusps may form ruling disks.

We will start by examining the vertical slice of the front diagram shown in Figure \ref{NNOF1}, given by the double-stranded twist box and the two parallel strands below it. The idea of the proof in both cases (4) and (5) is to restrict the rulings on this region which could possibly extend to normal rulings on the entire knot. Then, we will build normal rulings for the pretzel knots in cases (4) and (5) from the rulings in this region, and examine the (non)-orientability of these rulings to get restrictions on the $p_i$. Finally, for every case which admits a non-orientable normal ruling, we will explicitly construct a decomposable non-orientable filling associated to that ruling.

\begin{figure}
  \centerline{\includegraphics{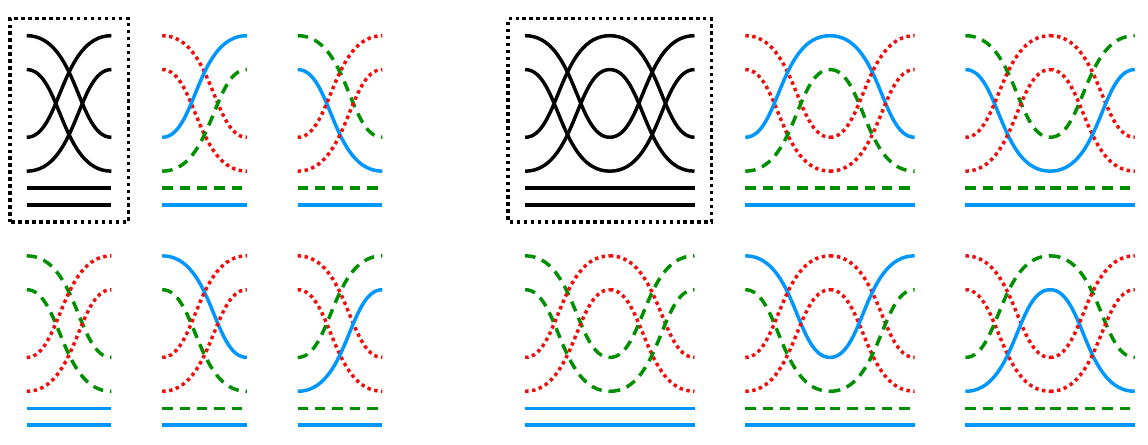}}
  \caption{A vertical slice of a front diagram for Case (4) and (5) pretzel knots with possible normal rulings. A single twist is shown on the left, and two twists on the right.}
  \label{NNOF1}
\end{figure}

In Figure \ref{NNOF1}, there are six strands in total, and each top and bottom strand of each of the three ruling disks not contained in twist boxes must pass through the region depicted. In particular, each strand in the region corresponds to a strand of one of these ruling disks. With a single double-stranded twist, one may check that there are only five ways of pairing these strands to form (potentially) normal rulings, shown on the left of Figure \ref{NNOF1}. Though we omit, for simplicity, the cases which do not yield normal rulings, one could verify this claim by first observing that there can be no switches in this region, and second, checking that (up to relabeling), the cases shown are the only non-switched normal rulings for this region. It follows that for arbitrary numbers of double-stranded twists, there are also only five potential rulings, given by juxtaposing possible rulings on each individual twist, shown on the right of Figure \ref{NNOF1}.

\textbf{Case (4): $P(-p_1,-p_2,p_3)$ with $p_1\geq p_2$ and $p_2\leq p_3+1$.}

\begin{figure}
\centerline{\includegraphics{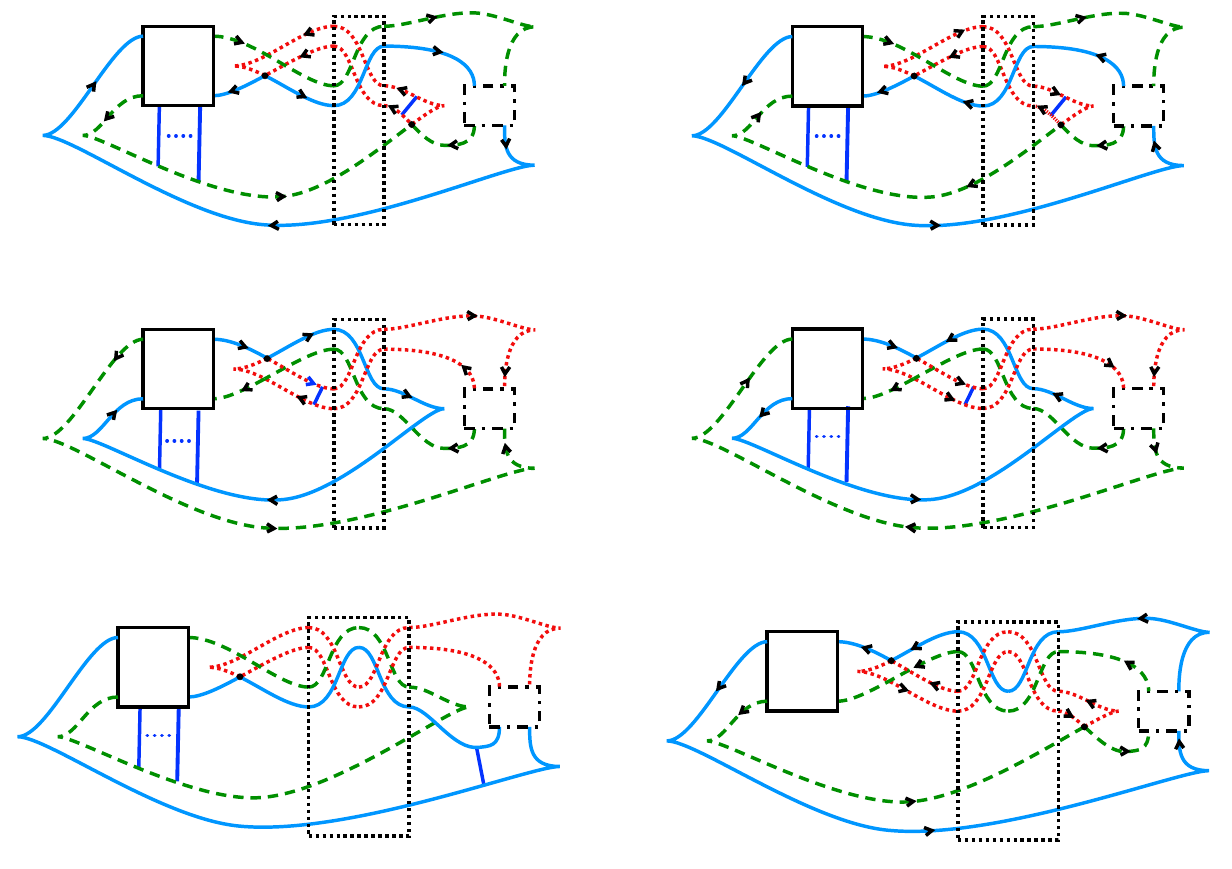}}
\caption{Normal rulings for case (4) pretzel knots with $p_2 \geq 2$.}
\label{case4}
\end{figure}

First, for the case $p_2>1$, the possible normal rulings built from the regions in Figure \ref{NNOF1} are shown in Figure \ref{case4}, where $p_2$ is assumed to be odd in the top two rows and even in the bottom row. Note that only the cases $p_2=3,4$ are shown, for simplicity, though the same arguments apply with arbitary $p_2$ of the same parity. One can check that, in the vertical slice from Figure \ref{NNOF1} (indicated by the dashed box in Figure \ref{case4}), the rulings which are not included cannot be extended to normal rulings in this case. Also, in the top two rows, the rulings in each row are the same up to change of orientation outside the twist boxes, though the given orientations are determined by the $p_i$. We will now examine what conditions on the $p_i$ are necessary to realize the rulings shown, with the given orientations, and when these rulings may be non-orientable.

For the top left ruling in Figure \ref{case4} to have the given orientation, we must have that $p_2$ is odd, $p_3-p_2+1$ is even and at least two (so that the ruling can switch at one of the crossings in the left twist box), and that $p_1=p_2$ (so that there is only a single crossing in the right twist box). In particular, we have that $p_1$ and $p_2$ are odd, $p_3$ is even, $p_2<p_3$ and $p_1=p_2$. For the top right, the conditions are similar except that $p_3$ must be odd instead. Here, the ruling is non-orientable only if it switches at (at least) one of the crossings in the left twist box. Whenever these conditions are met, the pinch moves shown yield non-orientable fillings.

For the left ruling in the second row in Figure \ref{case4} to have the given orientation, we must have that all of the $p_i$ are odd, and for it to be non-orientable we must have that $p_3-p_2+1\geq 3$ (so that the ruling can switch at one of the crossings in the left twist box). In particular, we have that all the $p_i$ are odd and $p_2<p_3$. For the right ruling in that row, the conditions are the same except that $p_3$ must be even instead. Again, the pinch moves shown yield non-orientable fillings.

For the left ruling in the third row in Figure \ref{case4} to be non-orientable, we must have that $p_2$ is even, $p_1$ and $p_3$ are odd, and $p_3-p_2+1\geq 1$. Because $p_2$ and $p_3$ have opposite parity, this implies $p_2<p_3$. So, we must have that $p_2$ is even, $p_1$ and $p_3$ are odd, and $p_2<p_3$. The ruling on the right is always orientable, and hence is not associated to a non-orientable filling. As above, whenever these conditions are met, the pinch moves shown yield non-orientable fillings.

\begin{figure}
\centerline{\includegraphics{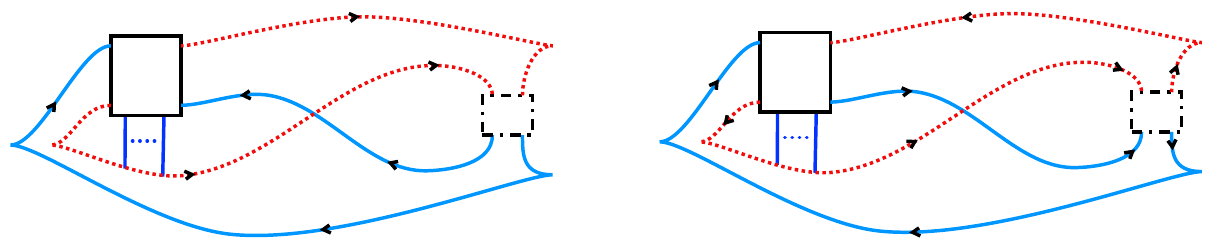}}
\caption{Normal rulings for case (4) pretzel knots with $p_2 =1$.}
\label{case4-1}
\end{figure}

Finally, we will deal with the case where $p_2=1$. The only normal ruling in this case (outside of the twist boxes) is the one shown in Figure \ref{case4-1}. One may check that the only two possible orientations on the strands of the knot which make this ruling non-orientable are the ones shown in the Figure. For the orientation on the left, we must have that $p_3$ is even and this implies that $p_1$ must be odd for $K$ to be a knot. For the orientation on the right, we must have that $p_1$ and $p_3$ are both odd. In particular, the knot has a non-orientable normal ruling exactly when $p_1$ is odd, regardless of the parity of $p_3$. For these rulings, the pinch moves shown yield non-orientable fillings.

Finally, we may verify that in each case, we must have $p_2<p_3$ and $p_1$ is odd, except if $p_2=p_3=1$ in the $p_2=1$ case. Furthermore, any tuple $(p_1,p_2,p_3)$ satisfying at most one of $p_2$ or $p_3$ is even (so that the corresponding pretzel knot is a knot), $p_1$ is odd, $p_1\geq p_2\geq 1$, and $p_2<p_3$ (or $p_2=p_3=1$) satisfies the conditions necessary to realize one of the non-orientable rulings from Figure \ref{case4}.

\begin{rem}
Note that, whenever $K=P(-p_1,-p_2,p_2-1)$ has a maximal $tb$ front of the form shown in Figure \ref{pretzel_fronts45}, the proof above shows that $K$ does not have a decomposable, non-orientable filling. If we had a maximal $tb$ front for this exceptional case, the techniques of this section could likely be used to classify when this final case is fillable, thereby completing the fillability classification of all 3-stranded pretzel knots.
\end{rem}

\textbf{Case (5): $P(-p_1,-p_2,p_3)$ with $p_1\geq p_2\geq 2$ and $p_2\geq p_3+2$.}

\begin{figure}
\centerline{\includegraphics[width=\textwidth]{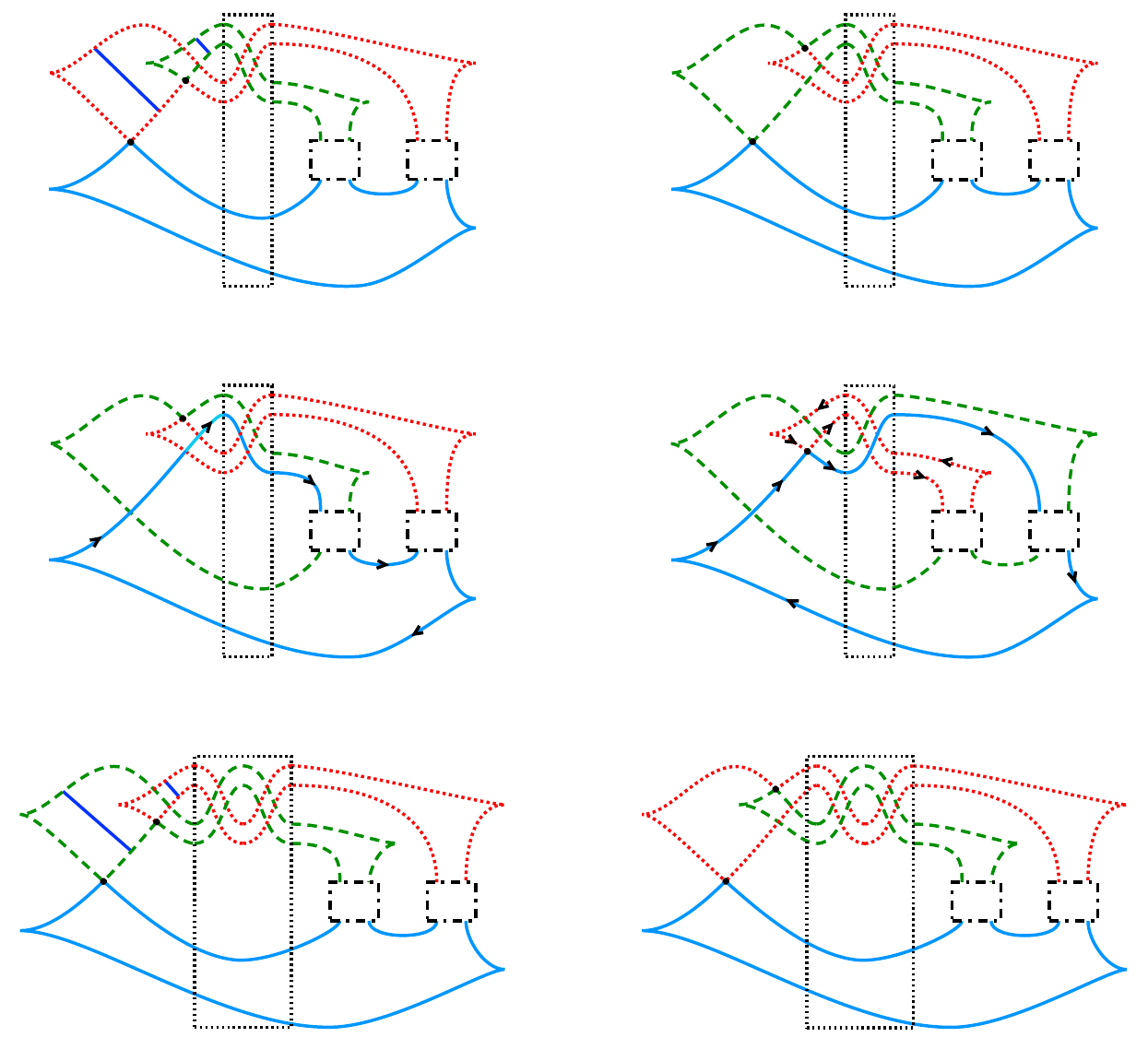}}
\caption{Normal rulings for case (5) pretzel knots.}
\label{case5}
\end{figure}

The possible normal rulings built from the regions in Figure \ref{NNOF1} are shown in Figure \ref{case5}, where $p_3$ is odd in the top two rows and even in the bottom row. One may easily verify that the regions from Figure \ref{NNOF1} not included in Figure \ref{case5} cannot be extended to normal rulings. The two rulings in the middle row in Figure \ref{case5} are orientable (regardless of the orientation on the green strands), and the rulings on the right in each row are all related to their adjacent rulings on the left by R0 moves.

In the top left ruling in Figure \ref{case5}, we must have that $p_3$ is odd, and one can check that this ruling is nonorientable exactly when one of $p_1$ or $p_2$ is even. When these conditions are met, the pinch moves shown yield nonorientable fillings.

In the bottom left ruling in Figure \ref{case5}, we must have that $p_3$ is even, and for the pretzel knot to be a knot it follows that $p_1$ and $p_2$ must both be odd. In this case, one can check that the ruling is nonorientable, and the pinch moves shown yield nonorientable fillings.

Finally, as before, any tuple $(p_1,p_2,p_3)$ with exactly one of the $p_i$ even, $p_1\geq p_2\geq 2$, and $p_2\geq p_3+2$ satisfies the conditions necessary to realize one of the non-orientable rulings in Figure \ref{case5}.
\end{proof}

\bibliographystyle{amsplain} 
\bibliography{main}

\end{document}